\newtheorem{theorem}{Theorem}[section]
\newtheorem{lemma}[theorem]{Lemma}
\newtheorem{proposition}[theorem]{Proposition}
\newtheorem{corollary}[theorem]{Corollary}
\theoremstyle{definition}
\newtheorem{example}[theorem]{Example}
\theoremstyle{remark}
\newtheorem{remark}[theorem]{Remark}
\numberwithin{equation}{section}
\begin{document}
\setcounter{page}{1}

\title[ The Dixmier trace, and the residue on compact manifolds]{The Dixmier trace and the noncommutative residue   for multipliers on compact manifolds}

\author[D. Cardona]{Duv\'an Cardona}
\address{
  Duv\'an Cardona:
  \endgraf
  Department of Mathematics  
  \endgraf
  Pontificia Universidad Javeriana
  \endgraf
  Bogot\'a
  \endgraf
  Colombia
  \endgraf
  {\it E-mail address} {\rm d.cardona@uniandes.edu.co;
duvanc306@gmail.com}
  }

\author[C. del Corral]{C\'esar del Corral}
\address{
  C\'esar del Corral:
  \endgraf
  Department of Mathematics  
  \endgraf
  Universidad de los Andes
  \endgraf
  Bogot\'a
  \endgraf
  Colombia
  \endgraf
  {\it E-mail address} {\rm ce-del@uniandes.edu.co; cesar.math@gmail.com}
  }


\dedicatory{}

\keywords{Dixmier trace; non commutative residue; global operators; representation theory; closed manifolds, manifolds with boundary; H\"ormander classes, Boutet de Monvel's algebra}

\date{Received: ;  Revised: ; Accepted: .
}

\begin{abstract}
In this paper we give formulae for the Dixmier trace and the noncommutative residue (also called Wodzicki's residue) of  pseudo-differential operators by using the notion of global symbol. We consider both cases,  compact manifolds with or without boundary. Our analysis on  the  Dixmier trace of invariant pseudo-differential operators on closed manifolds will be based on the Fourier analysis associated to every elliptic and positive operator and the quantization process developed by Delgado and Ruzhansky. In particular, for compact Lie groups this can be done by using the representation theory of the group in view of the Peter-Weyl theorem and the Ruzhansky-Turunen symbolic calculus.  The analysis of invariant pseudo-differential operators on compact manifolds with boundary  will be based on the global calculus of pseudo-differential operators developed by Ruzhansky and Tokmagambetov.\\
\textbf{MSC 2010.} Primary { 58J40; Secondary 58B34, 47L20, 22E30}.
\end{abstract} \maketitle

\tableofcontents
\section{Introduction}

The Dixmier trace arises in functional analysis, from the problem if there exists a non-trivial trace -- which does not coincide with the spectral trace -- for an ideal containing the set of trace class operators  on a Hilbert space $H$. In 1966, J. Dixmier in \cite{Dixmier} shows that there exists a trace which vanishes in the ideal of  trace class operators, but it is non-trivial in the so called Dixmier ideal $\mathcal{L}^{1,\infty}(H).$  
As it was pointed out in  \cite{SU} and references therein, the Dixmier trace is important due to its applications in fractal theory,  foliation theory, spaces of non-commuting coordinates, perturbation theory, non-commutative geometry and
quantum field theory.\\

 In this paper we  study the Dixmier trace of invariant  pseudo-differential operators on compact manifolds (multipliers) with or without boundary.  By applying the Connes trace theorem (see \cite{Connes}) we derive a formula for the noncommutative residue of Fourier multipliers on compact Lie groups, and manifolds with boundary.  This is possible based in the recent quantizations of pseudo-differential operators associated to global  symbols  introduced  by M. Ruzhansky and V. Turunen \cite{Ruz} (for compact Lie groups),  J. Delgado and M. Ruzhansky \cite{DR1} (for compact manifolds without boundary), and finally,  by  M. Ruzhansky and N. Tokmagambetov \cite{DRTk} (for compact manifolds with boundary). One of the advantages of our approach is that we obtain results on the Dixmier trace and the noncommutative residue for Fourier multipliers in terms  of global (full) symbols, where we give a new viewpoint, instead of the  classical results where the problem was treated by using local symbols (see e.g.  \cite{Connes,Dixmier,Fedosov,GGIS,Schrohe2,Schrohe,Wodzicki} and references therein). We summarize our research in the following results:
\begin{itemize}
\item We provide necessary and sufficient conditions in order to guarantee that global invariant pseudo-differential operators  on a manifold  $M$ with or without boundary lie in the {\em Dixmier class} $\mathcal{L}^{(1,\infty)}(H),$ $H=L^2(M),$ (see equation \eqref{dixmier}).  
\item  We provide  sufficient and necessary conditions in order to obtain Dixmier traceability for a type of  pseudo-differential operators with global symbols in H\"ormander classes on compact Lie groups and we express our results by using the representation theory of these groups. Also, we use Connes' trace theorem in order to show formulae for the noncommutative residue of classical pseudo-differential operators on compact Lie groups.
\item On a compact Lie group, we provide  formulae in terms of global symbols for the noncommutative residue of a type of classical pseudo-differential operators (non necessarily Fourier multipliers) in terms of the representation theory of the group (see Proposition \ref{noninvarianttrace}).
\item For a compact manifold $M$ with or without boundary,  we find criteria for global symbols in order that the corresponding operators belong to the Marcinkiewicz ideal $\mathcal{L}^{(p,\infty)}(H),$ $1<p<\infty,$ where $H=L^2(M),$ (see equation \eqref{lpinftydef}).
\end{itemize}

In order to present our main results we precise some definitions. By following Connes \cite{Connes}, if $H$ is a Hilbert space, the class $\mathcal{L}^{(1,\infty)}(H)$ consists of those bounded linear operators $A\in\mathcal{L}(H)$ satisfying
\begin{equation}
\sum_{1\leq n\leq N}s_{n}(A)=O(\log(N)),\,\,\,N\rightarrow \infty,
\end{equation}
where $\{s_{n}(A)\}$ denotes the sequence of singular values of $A$,  i.e. the square roots of the eigenvalues of the non-negative self-adjoint operator $A^\ast A.$ 
So, $\mathcal{L}^{(1,\infty)}(H)$ is endowed with the norm
\begin{equation}\label{dixmier}
\Vert A \Vert_{\mathcal{L}^{(1,\infty)}(H)}=\sup_{N\geq 2}\frac{1}{\log(N)}\sum_{1\leq n\leq N}s_{n}(A).
\end{equation}
The Dixmier trace $\textnormal{Tr}_{\omega}$ on positive operators in $\mathcal{L}^{(1,\infty)}(H)$ can be formulated by using  an average function $\omega$ of the following form
$$\lim _{\omega }\frac{1}{\log(N)}\sum_{1\leq n\leq N}s_{n}(A):=\omega\big( \frac{1}{\log(N)}\sum_{1\leq n\leq N}s_{n}(A)\big),$$
where $\lim_{\omega}$ is  a positive linear functional on $l^{\infty}(\mathbb{N})$, the set of bounded sequences, satisfying: 
\begin{itemize}
\item $\lim_{\omega}(\alpha_n)\geq 0$ if all $\alpha_n\geq 0$;
\item $\lim_{\omega}(\alpha_n) = \lim(\alpha_n)$ whenever the ordinary limit exists;
\item $\lim_{\omega}(\alpha_1,\alpha_1, \alpha_2, \alpha_2,\alpha_3,\alpha_3,\dots) = \lim_{w}(\alpha_n)$.
\end{itemize}
The functional $\textnormal{Tr}_{\omega}$ can be defined for positive operators, and later it can be extended to the whole ideal $\mathcal{L}^{(1,\infty)}(H)$ by linearity (in this case $\textnormal{Tr}_\omega$ is not necessarily a positive functional). The functional $\textnormal{Tr}_{\omega}$  is trivial  on the ideal $\mathcal{L}^1(H)$ of trace class operators, (see \cite{Connes} or  \cite{Dixmier}). The subset of $\mathcal{L}^{(1,\infty)}(H)$ with Dixmier trace independent of the average function $\omega$ defines the class of Dixmier measurable operators.\\
\\
In  noncommutative geometry, a remarkable result due to A. Connes shows that  classical pseudo-differential operators acting on $L^2(M)$ with order $-\dim(M),$ belong to the Dixmier  class, and the Dixmier trace coincides with the noncommutative residue  (see \cite{Wodzicki}), which is the unique trace (up to by factors) in the algebra of pseudo-differential operators with classical symbols on a closed manifold, (see \cite[pag. 305]{Connes} for this fact). R. Nest, E. Schrohe in \cite{Nest-Schrohe}, show that in general Connes' result does not hold for the Boutet de Monvel algebra on a manifold with boundary (cf. (\ref{Eq:Noncomm.res.boundary}) and (\ref{Eq.Dixmier-trace-Bdy})).\\
\\
More generally, the ideal $\mathcal{L}^{(p,\infty)}(H)$ consists of those linear bounded operators $A$ on $H$ satisfying the condition:
\begin{equation}\label{lpinftydef}
\sum_{1\leq n\leq N}s_{n}(A)=O(N^{(1-1/p)}),\,\,N\rightarrow \infty,
\end{equation}
for  $1<p<\infty.$ On $\mathcal{L}^{(p,\infty)}(H)$ the usual norm is given by   
\begin{equation}
\Vert A \Vert_{\mathcal{L}^{(p,\infty)}(H)}:=\sup_{N\geq 1}  N^{(\frac{1}{p})-1}\sum_{1\leq n\leq N}s_{n}(A).
\end{equation}
In order to present our results we introduce briefly the theory of pseudo-differential operators that we use here \cite{DR,Hor2,Ruz}. The notion of global symbol on $\mathbb{R}^n$ is natural because a pseudo-differential operator $A$ is an integral operator  defined by
\begin{equation}
Af(x)=\int_{\mathbb{R}^n} e^{ix\xi}\sigma_A(x,\xi)\hat{f}(\xi)\, d\xi,
\end{equation}
for a suitable smooth function $\sigma_A(x,\xi)$ -- called the symbol of $A$ -- and satisfying some bounded conditions on its derivatives (see \cite{Hor2}). {Here $\hat{f}$ denotes the euclidean Fourier transform of the function $f$.   If we consider a closed manifold  $M$ (i.e a compact manifold without boundary) and $H=L^{2}(M),$   there exists a (global) Fourier analysis associated to every positive elliptic pseudo-differential operator $E$ on $M$ which gives for certain  pseudo-differential operators $A$ --called {\em $E$-invariants}-- a discrete {Fourier} representation of the form
\begin{equation}\label{DRqua}
Af(x)=\sum_{l=0}^{\infty}\langle\sigma_{A,E}(l)\widehat{f}(l),e_{l}(x)\rangle_{\mathbb{C}^{d_l}}, 
\end{equation}
where $e_{l}(x):=(e_{l}^{m})_{1\leq m\leq d_{l}}$ and $\{e^{m}_{l}:l\in \mathbb{N}, 1\leq m\leq d_{l}\}$ provides a basis of $L^{2}(M)$ consisting  of eigenfunctions $e^{m}_{l},$ associated to certain eigenvalues $\lambda_l,$ $l\in\mathbb{N}_0,$ of the operator $E.$ The function $\sigma_{A,E}$ is  called the \textit{matrix valued symbol} of $A$ with respect to $E$. The process associating  to every operator $A$ acting in $C^{\infty}(M)$ a such matrix valued symbol $\sigma_{A,E}$ was introduced by J. Delgado and M. Ruzhansky, \cite{DR5}.  Also,  If $M=G$ is a compact Lie group and $E=-\mathcal{L}_{G}$ is minus the  Laplace-Beltrami operator on $G,$ Ruzhansky and Turunen \cite{Ruz} give a discrete representation to every pseudo-differential operator $A$  on $C^{\infty}(G)$ in terms of the representation theory of the group $G$, in the following way 
\begin{equation}\label{globalsymbol}
Af(x)=\sum_{[\xi]\in\widehat{G}}d_{\xi}\textnormal{Tr}[\xi(x)\sigma_A(x,\xi)\widehat{f}(\xi)],
\end{equation}
here $\widehat{G}$ denotes the unitary dual of the group $G.$  Ruzhansky-Turunen's calculus gives a characterization  of the H\"ormander classes $\Psi^{m}_{\rho,\delta}(G)$ on a compact Lie group $G$ by using the notion of global symbols. In fact, $A\in \Psi^{m}_{\rho,\delta}(G)$ if and only if its global symbol $\sigma_{A}(x,\xi)$ as in  \eqref{globalsymbol} satisfies
\begin{equation}
\Vert \Delta_{\xi}^{\alpha}\partial_{x}^{\beta}\sigma_{A}(x,\xi) \Vert_{op}\leq C_{\alpha,\beta}\langle \xi\rangle^{m-\rho|\alpha|+\delta|\beta|},\,\,\,x\in G, [\xi]\in\widehat{G},
\end{equation}
where $\langle \xi\rangle:=(1+\lambda_{[\xi]})^{\frac{1}{2}}$ and $\{\lambda_{[\xi]}:[\xi]\in \widehat{G}\}$ is the spectrum  of $-\mathcal{L}_{G}.$ In Remark \ref{mainremarkglobal} we will recall the relationship between the symbols $\sigma_{A,-\mathcal{L}(G)}$ and $\sigma_{A}$ associated to the quantizations \eqref{DRqua} and \eqref{globalsymbol}, respectively.\\
\\
Now, we recall that  for a  compact manifold $M$ (without boundary) of dimension $\varkappa$, a classical pseudo-differential operator $A$ on $M$ of order $m$, can be defined by local symbols. This means that for any local chart $U$, the operator $A$ has the form
$$Au(x)=\int_{T^{*}_xU} e^{ix\xi}\sigma^A(x,\xi)\widehat{u}(\xi)\, d\xi$$
where  $\sigma^A(x,\xi)$  is a smooth function on $T^{*} U\cong U\times\mathbb{R}^n,$} $T^{*} _{x}U=\mathbb{R}^n$,  admitting an asymptotic expansion 
\begin{equation}\label{Eq:asym-homg}
\sigma^A(x,\xi)\sim \sum_{j=0}^{\infty}\sigma^{A_{m-j}}(x,\xi)
\end{equation}
where $\sigma_{m-j}(x,\xi)$ are homogeneous functions in $\xi\neq 0,$ of degree $m-j$ for $\xi$ far  from zero. The set of classical pseudo-differential operators of order $m$ is denoted by $\Psi^m_{cl}(M)$. If $A\in\Psi_{cl}(M)$, for all $x\in M$, $\int_{|\xi|=1}\sigma_{-\varkappa}(x,\xi)\, d\xi\,dx$ defines a local density which can be glued over $M$. So, the non-commutative residue, is the functional defined on classical operators by 
\begin{equation}\label{resi}
\textnormal{res}\,(A)=\frac{1}{\varkappa(2\pi)^\varkappa}\int_{M}\int_{| \xi|=1}\sigma^{A_{-\varkappa}}(x,\xi)\, d\xi\,dx.
\end{equation}
An important feature of the  non-commutative residue is that it vanishes on non-integer order classical operators.  A complementary trace to the noncommutative residue is the canonical trace,  $A\mapsto \textnormal{TR}(A),$ on the set of classical pseudo-differential. The term complementary is justified because the noncommutative residue is defined on the whole algebra of pseudo-differential operators, but does not extend the usual $L^2$-trace. However the canonical trace is not well defined on the whole algebra of classical pseudo-differential operators, but it extends the usual $L^2$-trace on a suitable set, see, e.g. \cite{Paycha,Paycha-Scott,GSch04,CardonaA}.\\

Now we present our main results. Our formulae for Dixmier traces will be presented for positive operators in $\mathcal{L}^{(1,\infty)}(L^2(M))$ because such formulae can be extended to the whole ideal  $\mathcal{L}^{(1,\infty)}(L^2(M))$  by linearity.  We start with the case of manifolds without boundary. Here, $\varkappa$ denotes the dimension of a compact manifold without boundary $M.$

\begin{theorem}\label{main1}
Let $M$ be a  $\varkappa$-dimensional compact manifold without boundary   and let $E\in \Psi^{\nu}_{+e}(M)$ be a positive elliptic pseudo-differential operator on $M.$ If  $A: L^2(M)\rightarrow L^2(M)$  is a $E$-invariant bounded operator  with matrix-valued symbol $(\sigma_{A,E}(l))_{l}$,  then we have, 
\begin{itemize}
\item $A$ is \textrm{Dixmier traceable}, i.e. $A\in \mathcal{L}^{(1,\infty)}(L^{2}(M))$ if and only if 
\begin{equation}\label{eq1main}
\tau(A):=\frac{1}{\dim(M)}\lim_{N\rightarrow\infty} \frac{1}{\log N} \sum_{l: (1+\lambda_{l})^{\frac{1}{\nu}}\leq N} \textnormal{Tr}(|\sigma_{A,E}(l)|)<\infty,
\end{equation}
where $\sigma_{A,E}$ are as in \eqref{DRqua}. Moreover, if $A$ is positive, $\tau(A)=\textnormal{Tr}_{w}(A).$
\item The $E$-invariant operator $A\in \mathcal{L}^{(p,\infty)}(L^2(M))$ if and only if
\begin{equation}
\gamma_{p}(A):= \sup_{N\geq 1}\,\, N^{\dim M(\frac{1}{p}-1)}\cdot\sum_{l: (1+\lambda_{l})^{\frac{1}{\nu}}\leq N}\textnormal{Tr}(|\sigma_{A,E}(l)|)<\infty,
\end{equation}
for all $1<p<\infty.$ In this case, the norms $\Vert A\Vert_{\mathcal{L}^{(p,\infty)}(L^{2}(M))}$ and $\gamma_{p}(A)$ are equivalents, which we denote by $\Vert A\Vert_{\mathcal{L}^{(p,\infty)}(L^{2}(M))}\asymp \gamma_{p}(A)$.
\item Let $M=G$ be a compact Lie group and let $\widehat{G}$ be the unitary dual of $G.$ If we denote by $\sigma_{A}(x,\xi)$ the matrix valued symbol associated to $A,$ then under the condition
\begin{equation}
\Vert \Delta_{\xi}^{\alpha}\partial_{x}^{\beta}\sigma_{A}(x,\xi) \Vert_{op}\leq C\langle \xi\rangle^{-\varkappa-|\alpha|},\,\,\,x\in G, [\xi]\in\widehat{G},\,\,\, \forall \alpha\in \mathbb{R}^n,
\end{equation}
the operator $A$ is Dixmier measurable. Moreover, if $A$ is left-invariant, then $A$ is Dixmier traceable if and only if
\begin{equation}
\tau(A):=\frac{1}{\dim(G)}\lim_{N\rightarrow\infty}\frac{1}{\log N}\sum_{[\xi]:\langle \xi\rangle\leq N}d_{\xi}\textnormal{Tr}(|\sigma_{A}(\xi)|)<\infty.
\end{equation}
In this case, if $A$ is positive, $\tau(A)=\textnormal{Tr}_{w}(A).$
\item If $A\in \Psi^{-\varkappa}_{cl}(G)$ is a classical, positive and left-invariant pseudo-differential operator, the noncommutative residue of $A$ (cf. \eqref{resi}), is given in terms of representations on $G$ by
\begin{equation}\label{resi0}
\textnormal{res}(A)=\frac{1}{\dim(G)}\lim_{N\rightarrow\infty}\frac{1}{\log N}\sum_{[\xi]:\langle \xi\rangle\leq N}d_{\xi}\textnormal{Tr}(|\sigma_{A}(\xi)|).
\end{equation} 
 Moreover, if $A$ is as above, but not necessarily a multiplier, and its symbol admits an asymptotic expansion in homogeneous components of the form:
\begin{equation}
\sigma^A(x,\xi)\sim \sum_{j=0}^{\infty}a_{m-j}(x)\sigma^{A_{m-j}}(\xi),
\end{equation}
then 
\begin{equation}\label{cardona-delcorral}
\textnormal{res}(A)=\frac{1}{\dim(G)}\int_{G}a_{-\varkappa}(x)dx\times\lim_{N\rightarrow\infty}\frac{ 1  }{\log N}\sum_{\xi: \langle \xi\rangle\leq N}d_{\xi}\textnormal{Tr}(|\sigma_{A_{-\varkappa}}(\xi)|).
\end{equation} 
\end{itemize}
\end{theorem}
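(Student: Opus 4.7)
The plan is to exploit the simultaneous diagonalization that $E$-invariance forces. Since $A$ commutes with the spectral projectors of $E$, in the eigenbasis $\{e_l^m\}$ the operator $A$ decomposes block-diagonally with $l$-th block $\sigma_{A,E}(l)\in\mathbb{C}^{d_l\times d_l}$. Consequently, the multiset of singular values of $A$ is precisely the union $\bigsqcup_l\{s_k(\sigma_{A,E}(l))\}_{k=1}^{d_l}$, and I convert any sum over a cardinal index $n\leq N_{*}$ into a sum over blocks with $(1+\lambda_l)^{1/\nu}$ bounded. The dictionary is provided by Weyl's asymptotic law for the positive elliptic operator $E$ of order $\nu$, namely $\#\{l:(1+\lambda_l)^{1/\nu}\leq N\}\sim c N^{\varkappa}$; cutting at cardinal index $N_{*}$ therefore corresponds to cutting the spectral parameter at $N\sim N_{*}^{1/\varkappa}$, and $\log N\sim \varkappa^{-1}\log N_{*}$. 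This is precisely where the prefactor $1/\dim(M)$ in the definition of $\tau(A)$ originates.

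The first bullet then follows by direct comparison: one has
\[
\sum_{n\leq N_{*}}s_n(A)=\sum_{l:(1+\lambda_l)^{1/\nu}\leq N}\textnormal{Tr}(|\sigma_{A,E}(l)|)
\]
up to the contribution of the partially-filled boundary block, which is controlled by the polynomial growth of $d_l$ together with the boundedness of $A$. Dividing by $\log N_{*}\sim \varkappa\log N$, the Dixmier ideal condition $\sum_{n\leq N_{*}}s_n(A)=O(\log N_{*})$ becomes exactly $\tau(A)<\infty$. For positive $A$, the existence of the limit forces the average functional $\omega$ to act trivially, so $\tau(A)=\textnormal{Tr}_{\omega}(A)$ and $A$ is Dixmier measurable. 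The $\mathcal{L}^{(p,\infty)}$ statement is proved identically, with $N_{*}^{1-1/p}\asymp N^{\varkappa(1-1/p)}$ replacing $\log N_{*}$; the equivalence $\|A\|_{\mathcal{L}^{(p,\infty)}}\asymp \gamma_p(A)$ is then inherited from the fact that the partial-sum comparison is uniform in $N$.

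For the compact Lie group statements I specialize $E=-\mathcal{L}_G$ (so $\nu=2$ and $\langle\xi\rangle=(1+\lambda_{[\xi]})^{1/2}$) and invoke Peter--Weyl to identify the $\lambda_{[\xi]}$-eigenspace with $d_\xi$ copies of the representation space $\mathcal{H}_\xi$; tracing over each copy produces the factor $d_\xi\,\textnormal{Tr}(|\sigma_A(\xi)|)$ in place of $\textnormal{Tr}(|\sigma_{A,E}(l)|)$, via the symbol identification recalled in Remark \ref{mainremarkglobal}. The decay hypothesis $\|\Delta_\xi^\alpha\partial_x^\beta\sigma_A(x,\xi)\|_{op}\leq C\langle\xi\rangle^{-\varkappa-|\alpha|}$ places $A$ in $\Psi^{-\varkappa}_{1,0}(G)$; I then combine Connes' trace theorem with a Weyl law with remainder for $-\mathcal{L}_G$ to upgrade the Dixmier-ideal $O(\log N)$ bound to a genuine logarithmic asymptotic, which yields Dixmier measurability. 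When $A$ is also left-invariant, the traceability criterion is just the first bullet rewritten representation-theoretically.

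Finally, for left-invariant classical $A\in\Psi^{-\varkappa}_{cl}(G)$, Connes' trace theorem identifies $\textnormal{res}(A)$ with the Dixmier trace, which by the preceding bullet equals $\tau(A)$, giving \eqref{resi0}. For the non-multiplier case, I use that $\textnormal{res}$ depends only on the principal homogeneous component $\sigma^{A_{-\varkappa}}(x,\xi)=a_{-\varkappa}(x)\sigma^{A_{-\varkappa}}(\xi)$; the variables separate in \eqref{resi}, and I evaluate the remaining cotangent integral $\int_{|\xi|=1}\sigma^{A_{-\varkappa}}(\xi)\,d\xi$ by applying the just-proven multiplier formula to the left-invariant operator with symbol $\sigma^{A_{-\varkappa}}(\xi)$, thereby arriving at \eqref{cardona-delcorral}. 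The main technical obstacle I anticipate lies in the sharp conversion between cardinal and spectral sums, specifically controlling the boundary block and ensuring that the Weyl remainder is $o(\log N)$, and correspondingly upgrading the $O$-bound to a genuine asymptotic for the Dixmier-measurability part under only H\"ormander-type symbol estimates (rather than a classical expansion).
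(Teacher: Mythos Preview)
Your proposal is correct and follows essentially the same route as the paper: block-diagonalization of $A$ via $E$-invariance to identify the singular values with $\bigcup_l s(\sigma_{A,E}(l))$, conversion between cardinal and spectral cutoffs through the Weyl counting formula (which is exactly how the paper extracts the factor $1/\dim(M)$), the symbol identification of Remark~\ref{mainremarkglobal} for the Lie group case, Connes' trace theorem for measurability and for the residue, and separation of variables in the residue integral for the non-multiplier statement. The technical concerns you flag at the end---controlling the boundary block in the cardinal/spectral conversion and upgrading the $O(\log N)$ bound to a genuine asymptotic for measurability under merely H\"ormander-type estimates---are points the paper itself treats rather loosely (it invokes Connes' theorem directly without isolating the classical-symbol hypothesis), so your caution there is well placed rather than a defect of your argument.
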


It is important to mention that  our formula \eqref{eq1main} for the Dixmier trace of a global operator $A$ on a closed manifold $M$  has two components:   one is, the inverse of the dimension of $M,$ which has  a geometric nature. The other one has an analytic nature, which is defined by the global symbol $\sigma_{A,E}(\cdot)$ of $A$, determined by the Fourier analysis associated to $E$.\\
We observe that the approach in the preceding result can be used in order to analyze the Dixmier traceability of Fourier multipliers on compact homogeneous manifolds; this analysis in terms of the Fourier analysis and the representation theory of such manifolds has been considered in Theorem \ref{compacthomogeneousmanifold}.   The equation \eqref{cardona-delcorral} provides a formula of the noncommutative residue for a class of global  pseudo-differential operators on compact Lie groups. In particular, our approach, provide formulae for the noncommutative residue of operators on the torus, in a different way to the   work of Pietsch \cite{Pietsch}.\\
\\
Now we present our result concerning to global operators on a manifold with boundary. In the formulation of our result we use the {\em global quantization} of pseudo-differential operators on compact manifolds with boundary due to J. Delgado, M. Ruzhansky and N. Tokmagambetov \cite{Ruz-Tok} which we briefly describe as follows. If $M$ denotes a compact manifold $M$ with boundary $\partial M$ and $L$ is a pseudo-differential operator on $M$  satisfying some boundary conditions on $\partial M$ and with a merely    discrete spectrum $\{\lambda_{\xi}\,|\, \xi\in \mathcal{I}\}$ (see Section \ref{boundary}), then every continuous linear operator $A$ acting on  a suitable domain $C^{\infty}_{L}(M),$ has associated a  function $\sigma_{A,L}:M\times  \mathcal{I}\rightarrow \mathbb{C}$ --called the {\em $L$-global symbol} of $A$-- satisfying
\begin{equation}
\sigma_{A,L}(x,\xi)=u_{\xi}(x)^{-1}A(u_{\xi})(x),\,\,\,\xi\in\mathcal{I},\,x\in M,
\end{equation}
where $u_{\xi}$ is the eigenvalue corresponding to $\lambda_{\xi}.$ In this case, the operator $A$ can be written in terms of the global symbol $\sigma_{A,L}(x,\xi)$ as
\begin{equation}
Af(x)=\sum_{\xi\in \mathcal{I}}u_{\xi}(x)\sigma_{A,L}(x,\xi)\widehat{f}(\xi),\,\text{ for }\,f\in  C^{\infty}_{L}(M),
\end{equation}
where $\widehat{f}:=\mathcal{F}_{L}(f)$ denotes the $L$-Fourier transform of $f$ introduced in \cite{Ruz-Tok}, (see also \eqref{Lfourier}).\\
An {\em $L$-Fourier multiplier} is a bounded linear operator $A:C^\infty_L(M)\to C^\infty_L(M)$ if it satisfies 
$$\mathcal{F}_L(Af)(\xi)=\sigma_{A,L}(\xi)\mathcal{F}_L(f)(\xi),\,\text{ for }\, f\in C^\infty_L(M),$$
for some function $\sigma_{A,L}:\mathcal{I}\rightarrow \mathbb{C}$ which depends only on the Fourier variable $\xi\in\mathcal{I}$, in this case $\sigma_{A,L}(\xi)$ corresponds with the $L$-symbol of $A$.\\
\\
With the above notation our result on the Dixmier traceability of operators on manifolds with boundary can be enunciated of the following way. We write $\mathcal{I}=\{\xi_{l}:l\in\mathbb{N}_{0}\}.$
\begin{theorem}
Let $M$ be a  compact manifold with boundary  $\partial  M.$ If $A:L^2(M)\to  L^2(M)$ is a bounded Fourier multiplier and $L$ is a self-adjoint operator on $L^{2}(M)$ as in Section \ref{boundary}, then we have the following assertions,
\begin{itemize}
\item $A$ is \textrm{Dixmier traceable} if and only if 
\begin{equation}\label{eq1mainboundary}
\tau'(A):=\lim_{N\rightarrow\infty}\frac{ 1  }{\log N}\sum_{l\leq N}  |\sigma_{A,L}(\xi_{l})|<\infty.
\end{equation}
In this case,    if $A$ is positive, $\tau'(A)=\textnormal{Tr}_{\omega}(A).$
\item Moreover, if $L$ is an operator of order $m$ satisfying the  Weyl Counting Eigenvalue Formula, that is, 
\begin{equation}\label{weyllawboundary}
N_L(\lambda):=\#\{l:|\lambda_{\xi_l}|^{\frac{1}{m}}\leq \lambda\}=C_{0}\lambda^{\dim M}+O(\lambda^{\dim M-1}),\,\,\lambda\rightarrow\infty,
\end{equation}
then $A$ is Dixmier traceable if only if (cf. \eqref{eq1mainboundary})
\begin{equation}
\tau'(A)=\frac{1}{\dim M}\lim_{N\rightarrow\infty}\frac{ 1  }{\log N}\sum_{ l: |\lambda_{\xi_{l}}|^{\frac{1}{m}}\leq  N}  |\sigma_{A,L}(\xi_{l})|<\infty.
\end{equation} 
In this case,  if $A$ is positive, $\tau'(A)=\textnormal{Tr}_{\omega}(A).$
\item  $A\in \mathcal{L}^{(p,\infty)}(L^2(M))$ if and only if
\begin{equation}
\gamma_{p}'(A):=\sup_{N\geq 1}\,\,N^{(\frac{1}{p}-1)} \sum_{l\leq N}|\sigma_{A,L}(\xi_l)|<\infty,
\end{equation}
for all $1<p<\infty.$ In this case, $\Vert A\Vert_{\mathcal{L}^{(p,\infty)}(L^{2}(M))}\asymp \gamma_{p}'(A).$ Moreover, if $L$  satisfies \eqref{weyllawboundary}, we have
\begin{equation}
\gamma_{p}'(A)\asymp\sup_{N\geq 1}\,\,N^{\dim M(\frac{1}{p}-1)} \sum_{l\,:\, |{\lambda_{\xi_l}}|^{\frac{1}{m}}\leq N}|\sigma_{A,L}(\xi_l)|.
\end{equation}
\end{itemize}
Moreover, if we assume the existence of a such $L$ as before, and $ \textit{A}=
 \begin{bmatrix}
   {}_{P_{+}} \\
   {}_{T}  \
 \end{bmatrix}
$ is an elliptic operator and positive in the Boutet de Monvel algebra of order $n$, then the operator $P_T$ (realization $P$ with respect to $T$) can be regarded as an operator in Ruzhansky- Tokmagambetov's calculus (see Remark \eqref{Eq:Realization-op}), and  any parametrix $R$ of $P_T$ is Dixmier's traceable with 

\begin{equation}\label{Eq:formula-bdy0}
\textnormal{Tr}_{\omega}(R)=\textnormal{res}\,(R)=\lim_{N\to \infty} \frac{1}{\ln N} \sum_{l\leq N}|(\sigma_{P,L}(\xi_l))^{-1}|,
\end{equation}
here $(\sigma_{P,L}(\xi_l))_{l\in\mathbb{N}_0}$ denotes the global symbol of the operator $P$ with respect to $L_{M}$. 
The expression is the same for all parametrices and independent of the choice of the boundary condition and independent on the average function $\omega$. 
\end{theorem}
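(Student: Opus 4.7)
The plan is to reduce each assertion to the statements proved for the closed-manifold case (Theorem \ref{main1}) by exploiting that every $L$-Fourier multiplier is simultaneously diagonalized by the eigenbasis $\{u_{\xi_l}\}_{l\in\mathbb{N}_0}$ of $L$. Since $A u_{\xi_l}=\sigma_{A,L}(\xi_l)\,u_{\xi_l}$, the singular values of $A$ are precisely the decreasing rearrangement of $\{|\sigma_{A,L}(\xi_l)|\}_{l\in\mathbb{N}_0}$. The first bullet then follows directly from the defining condition $\sum_{n\leq N}s_n(A)=O(\log N)$: upon writing the singular values in the enumeration provided by $\mathcal{I}$, the finiteness of $\tau'(A)$ is equivalent to membership in $\mathcal{L}^{(1,\infty)}(L^2(M))$, and for positive $A$ the Dixmier trace computed with any admissible $\omega$ equals the Cesàro-type average in \eqref{eq1mainboundary} because the scalar sequence $\omega\mapsto\omega\bigl(\tfrac{1}{\log N}\sum s_n(A)\bigr)$ collapses to the honest limit when the latter exists.

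The second bullet is obtained by a re-indexing argument based on the Weyl asymptotic \eqref{weyllawboundary}. From $N_L(\lambda)=C_0\lambda^{\dim M}+O(\lambda^{\dim M-1})$, the two summation conventions $\{l\leq N\}$ and $\{l:|\lambda_{\xi_l}|^{1/m}\leq N\}$ differ only by the substitution $N\leftrightarrow C_0 N^{\dim M}$; since $\log(C_0N^{\dim M})=\dim M\cdot\log N+O(1)$, the factor $1/\dim M$ appears in front of the limit. The analogous re-indexing proves the Marcinkiewicz characterization in the third bullet: the condition $\sum_{n\leq N}s_n(A)=O(N^{1-1/p})$ translates verbatim to $\gamma_p'(A)<\infty$ under the ordering induced by $\mathcal{I}$, and applying the Weyl law replaces $N^{1-1/p}$ with $N^{\dim M(1-1/p)}$ under the eigenvalue-indexed summation. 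The norm equivalence $\|A\|_{\mathcal{L}^{(p,\infty)}(L^2(M))}\asymp\gamma_p'(A)$ follows because, for a diagonal operator, the rearranged and the unrearranged partial sums have equivalent $\sup$-seminorms (the inequality in one direction is trivial; in the other, one estimates $\sum_{n\leq N}s_n$ by a constant times the largest partial sum of $|\sigma_{A,L}|$, using monotonicity).

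For the Boutet de Monvel statement, I would argue as follows. By the hypothesis that $P_T$ fits into the Ruzhansky--Tokmagambetov framework (Remark \ref{Eq:Realization-op}), the realization is an unbounded positive elliptic operator with discrete spectrum, and any parametrix $R$ is an $L$-Fourier multiplier of order $-n=-\dim M$ with symbol $\sigma_{R,L}(\xi_l)=(\sigma_{P,L}(\xi_l))^{-1}$ up to a smoothing remainder, which is trace-class and therefore invisible to $\textnormal{Tr}_\omega$. Thus $R\in \mathcal{L}^{(1,\infty)}(L^2(M))$ precisely when the sum in \eqref{Eq:formula-bdy0} is finite; the Weyl law for elliptic positive $P_T$ guarantees this asymptotic behaviour. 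The identification $\textnormal{Tr}_\omega(R)=\textnormal{res}(R)$ is the content of Connes' trace theorem (extended to the Boutet de Monvel algebra in the invariant-multiplier regime via \cite{Nest-Schrohe}), and combining it with the first bullet yields the explicit expression involving $|(\sigma_{P,L}(\xi_l))^{-1}|$. Independence of the boundary condition, of the choice of parametrix, and of $\omega$ is then automatic: any two parametrices differ by a smoothing operator, any two admissible boundary conditions produce realizations with the same principal symbol, and the expression on the right is a genuine limit rather than an $\omega$-limit.

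The main obstacle will be the last bullet: one must justify carefully that the parametrix of $P_T$, as a Boutet de Monvel operator, really is representable as an $L$-Fourier multiplier in the Ruzhansky--Tokmagambetov calculus with symbol $(\sigma_{P,L})^{-1}$ modulo trace-class, and that Connes' identification of $\textnormal{Tr}_\omega$ with $\textnormal{res}$ transfers intact to this setting despite the known failure of Connes' formula for general Boutet de Monvel operators noted by Nest--Schrohe. The reduction to a multiplier (so that the Schrohe counterexamples do not apply) is precisely what makes the clean formula \eqref{Eq:formula-bdy0} possible, and verifying this reduction rigorously is where the technical work lies.
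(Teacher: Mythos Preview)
Your proposal is essentially correct and follows the same route as the paper. A few remarks on presentation and emphasis.

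Your opening line frames the argument as a reduction to the closed-manifold Theorem \ref{main1}, but in fact neither you nor the paper actually reduces to that theorem: the boundary case is proved directly by the parallel (and simpler, since the symbols are scalars rather than matrices) argument you then describe. The paper's proof of Theorem \ref{teoremboundary} spells out the spectrum identification $\textnormal{Spec}_p(A)=\{\sigma_{A,L}(\xi_l)\}$ with a short contradiction argument and then, using self-adjointness of $L$ to guarantee that $A^*A$ is again an $L$-multiplier with symbol $|\sigma_{A,L}|^2$, obtains $s(A)=\{|\sigma_{A,L}(\xi_l)|\}$ exactly as you sketch. The Weyl-law reindexing for the second and third bullets is also the same.

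For the Boutet de Monvel part, the paper's argument matches your outline: use $H^s(M)\cong \mathcal{H}^s_L(M)$ and the transmission property to place $P_T$ in the Ruzhansky--Tokmagambetov calculus, invoke Proposition \ref{Prop.parametrix} to see that $R$ is a Fourier multiplier up to a trace-class (smoothing, type-zero singular Green) error, and then apply the first bullet. The identification $\textnormal{Tr}_\omega(R)=\textnormal{res}(R)$ is not Connes' theorem per se but the specific Nest--Schrohe result recorded as Proposition \ref{Thm.Dix-res-NSCH}, which applies precisely because the operator in question has the form $\begin{bmatrix} P_+ \\ T \end{bmatrix}$ with no singular Green part in the top row; this is exactly the mechanism you identify in your last paragraph as sidestepping the general Nest--Schrohe obstruction. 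So the ``technical work'' you flag is handled in the paper by quoting that proposition rather than by a new argument.
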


As it was pointed out in \cite{weyllaw,Strichartz},  the condition \eqref{weyllawboundary} holds true for several classes of elliptic operators on manifolds with boundary and without boundary, which are not necessarily compact manifold. For our purposes,  \eqref{weyllawboundary} is not restrictive because such condition  implies that for some $s_{0}\in\mathbb{R},$
\begin{equation}
\tau(s_{0},L):=\sum_{\xi\in I}\langle \xi \rangle^{-s_0}<\infty,
\end{equation}
which is an important condition in the Ruzhansky-Tokmagambetov quantization (See Section \ref{boundary}) for manifolds with boundary.  For an exhaustive historical perspective on this subject we refer  to \cite{weyllaw}. Now, we present some references on the Dixmier traceability of pseudo-differential operators and related topics.  The problem of classifying   global pseudo-differential operators on certain ideals of bounded operators --as Schatten von Neumann clasess and Gr\"othendieck nuclear operators-- in terms of global symbols  has been considered  of a very satisfactory way in the references \cite{DR,DR1,DR3,DR4} and \cite{DR5}. Nevertheless, the problem of finding sufficient conditions on the symbol $\sigma$ of an pseudo-differential operator in order that the corresponding operator $A$ will be trace class, Hilbert Schmidt or Dixmier traceable is classical. If $H=L^{2}(\mathbb{R}^n)$ and we consider the pseudo-differential operator $A$ defined by the Weyl quantization,
\begin{equation}\label{weyl}
Af(x)=\int_{\mathbb{R}^n}\int_{\mathbb{R}^{n}}e^{ix\xi}\sigma_A(\frac{1}{2}(x+y),\xi)f(y)dyd\xi
\end{equation}
is well known that $\sigma_A(\cdot)\in L^{1}(\mathbb{R}^{2n}),$  implies that $A$ is class trace,  and $\sigma\in L^2(\mathbb{R}^{2n})$  gives $A$ Hilbert-Schmidt. In the framework of the Weyl-H\"ormander calculus of operators $A$ associated to symbols $\sigma$ in the $S(m,g)$-classes (see \cite{Hor2}), there exists two remarkable results. The first, due to H\"ormander, which asserts that if $\sigma\in S(m,g)$ and $\sigma\in L^{1}(\mathbb{R}^{2n})$  then $A$ is a trace class operator. The second, due to L. Rodino and F. Nicola expresses that if $\sigma \in S(m,g)$ and $m\in L^{1}_{w},$  (the weak-$L^1$ space), then $A$ is Dixmier traceable \cite{Rod-Nic}. Moreover, an open conjecture by Rodino and Nicola (see \cite{Rod-Nic}) says that $\sigma\in L^{1}_{w}(\mathbb{R}^{2n})$ gives an operator $A$ with finite Dixmier trace. More general regularized traces for pseudo-differential operators (see S. Scoot \cite{Scott} or S. Paycha \cite{Paycha} for complete a description) have been studied in different contexts, in  first instance, G. Grubb, \cite{GG05,GG05-2} has obtained results about the zeta regularized trace by using a resolvent approach; S. Paycha \cite{Paycha-Scott} has obtained important results about traces on meromorphic families of pseudo-differential operator;  B. Fedosov, F. Golse, E. Leichtnam, E.  Schrohe, \cite{Fedosov} have defined the noncommutative residue and its properties on  the algebra of pseudo-differential boundary value problems called Boutet de Monvel's algebra, in the case of manifolds with boundary;  G. Grubb and E. Schrohe in \cite{GSch04} have studied defect formulas for regularized traces on the Boutet de Monvel algebra, whose operators are choose to satisfy the so-called H\"ormander transmission property. Recently, one of the author has studied in \cite{CdC} the traciality property and uniqueness of the canonical trace for a class of operators not necessarily satisfying the transmission property.

Finally we explain the structure of our paper.
\begin{itemize}
\item In Section \ref{prel}, we present some preliminaries on the quantization of global pseudo-differential operators on compact Lie groups and general compact manifolds with or without boundary and we  briefly describe the Boutet de Monvel calculus. Also, we present some notions and known results about the Dixmier trace as well as the noncommutative residue.
\item In section \ref{proof} we prove our main results for operators acting on functions defined on compact manifolds without boundary.   The case of compact manifolds with boundary will be addressed in Section 
\ref{proof.boundary}.
\item In Section \ref{compacthm} we study the Dixmier traceability of multipliers  and we investigate the noncommutative residue in this setting.
\item We end our paper with some examples in Section \ref{examples}.
\end{itemize}

\section{Preliminaries}\label{prel}

In this section we present some basics facts about the Fourier analysis used trough this paper. Also, we recall some definition about the noncommutative residue (sometimes called Wodzicki's residue) and the Dixmier trace for pseudo-differential operators on compact manifolds with or without boundary. Trough of this article, $\mathcal{L}^{(1,\infty)}(H)$ denotes the Dixmier ideal of the set $\mathcal{L}(H)$ of bounded operators on an Hilbert space $H,$ $\textnormal{Spec}_p(A)$ denotes the pointwise spectrum of a (not necessarily bounded) linear operator $A$ on $H,$ and $\textnormal{Tr}_{\omega}(\cdot)$ is the Dixmier trace function on the ideal $\mathcal{L}^{(1,\infty)}(H).$

\subsection{Pseudo-differential operators on compact manifolds without boundary.}\label{Sec:PDOs-no-bdy}

We recall that for every $m\in \mathbb{R}$ and every open set $U\subset \mathbb{R}^n$, the H\"ormander class $S^{m}(U\times \mathbb{R}^n)$, (for a detailed description see \cite{Hor2}),  is defined  by functions satisfying the usual estimates
\begin{equation}\label{horm}
|\partial_{x}^{\alpha}\partial_{\xi}^{\beta}\sigma(x,\xi)|\leq C_{\alpha,\beta}\langle \xi\rangle^{m-|\beta|},
\end{equation}
for all $(x,\xi) \in T^*U \cong U\times \mathbb{R}^{n}$ and $\alpha,\beta\in \mathbb{N}^n$. Then, a pseudo-differential operator associated to a function $\sigma \in S^{m}(U\times \mathbb{R}^n)$ is a operator of the form
\begin{equation}
\sigma(x,D)f(x)=\int_{\mathbb{R}^n}e^{i x\xi}\sigma(x,\xi)\widehat{f}(\xi)d\xi,\,\,\text{ for }\,\,f\in C^{\infty}_{0}(U),
\end{equation}
where $C^\infty_0(U)$ denotes the set of smooth compact supported functions on $U$. The set of such operators is denoted by $\Psi^m(U\times\mathbb{R}^n)$. On a smooth manifold $M$ without boundary of dimension $n$, for every $m\in\mathbb{R},$ the H\"ormander class of order $m,$ $S^{m}(M)$ is defined by smooth functions on the cotangent $T^{*}M$ which in local coordinates coincides with symbols in some open sets $U$ of $\mathbb{R}^n$ satisfying inequalities as in  \eqref{horm}. We observe that the notion of symbol on arbitrary manifolds is of local nature. However, it is possible to define a notion of global symbol if we restrict our attention to the case of compact Lie groups. A such notion  was developed by M. Ruzhansky and V. Turunen in \cite{Ruz}. In this theory, every operator $A$ mapping $C^{\infty}(G)$ itself, where $G$ is a compact Lie group, can be described in terms of  representations of $G$ as follows.\\
\\
Let $\widehat{G}$ be the unitary dual of $G$ (i.e, the set of equivalence classes of continuous irreducible  unitary representations on $G$), the Ruzhansky-Turunen approach establishs that $A$ has associated a {\em matrix-valued global (or full) symbol} $\sigma_{A}(x,\xi)\in \mathbb{C}^{d_\xi \times d_\xi}$,  $[\xi]\in \widehat{G}$, on the noncommutative phase space $G\times\widehat{G}$ satisfying
\begin{equation}
\sigma_A(x,\xi)=\xi(x)^{*}(A\xi)(x).
\end{equation}
Then it can be shown that the operator $A$ can be expressed in terms of such a symbol as, \cite{Ruz},
\begin{equation}\label{mul}Af(x)=\sum_{[\xi]\in \widehat{G}}d_{\xi}\text{Tr}(\xi(x)\sigma_A(x,\xi)\widehat{f}(\xi)). 
\end{equation}  
An important feature in this setting is that the H\"ormander classes $\Psi^{m}(G),$ $m\in\mathbb{R}$  where characterized in \cite{Ruz,RWT} by the condition: $A\in \Psi^{m}(G)$ if only if its matrix-valued symbol $\sigma_{A}(x,\xi)$ as before satisfies 
\begin{equation}\label{equivalence}
\Vert \partial_{x}^{\alpha}\mathbb{D}^{\beta}\sigma_{A}(x,\xi)\Vert_{op} \leq C_{\alpha,\beta} \langle \xi\rangle^{m-|\beta|},
\end{equation}
for every $\alpha,\beta\in \mathbb{N}^n.$ For a rather comprehensive treatment of this quantization process we refer to \cite{Ruz}.\\
\\
The notion of global symbol on arbitrary compact manifolds is more delicate. This problem has been considered by J. Delgado and M. Ruzhansky in \cite{DR3,DR4}. Now, we explain this notion. If $M$ is a compact manifold without boundary with a volume element $dx$, and $E$ is a positive elliptic classical pseudo-differential operator of order $\nu>0,$ we say that $A:C^{\infty}(M)\rightarrow C^{\infty}(M)$ is $E$-invariant if $A$ and $E$ commutes. In this case we can associate a full symbol to $A$ in terms of the spectrum of $E$ as follows: the eigenvalues of $E$ form a positive sequence $\lambda_{j},$ which we label as 
\begin{equation}
0\leq \lambda_{0}\leq  \lambda_1\leq \lambda_2\leq\cdots \leq \lambda_n\leq \cdots.
\end{equation}
For every eigenvalue $\lambda_{j}$ the corresponding eigenspace $H_{j}=\textnormal{Ker}(E-\lambda_jI).$ If $\lambda_0=0$ then $d_{0}=\dim H_0=\dim \textnormal{Ker}(E).$ By the Spectral Theorem we can write
 $${L}^2(M)=\bigoplus_{j=0}^{\infty}H_{j}.$$
So, if we denote by $\{e_{jk} \}_{k=1}^{d_{j}}$ a basis of $H_j$ for $j\in\mathbb{N}$, where $d_j=\dim H_j,$ and for every $f\in L^2(M),$ we define the $E$-Fourier transform of $f$ (relative to the operator $E$ and the basis $\{e_{jk} \}_{k,j}^{d_{j}}$) at $j\in \mathbb{N}$ by
\begin{equation}
(\mathcal{F}_{E}f)(j):=\widehat{f}(j):=\begin{bmatrix} \langle f,e_{j1} \rangle_{L^{2}(M)}&\\ \langle f,e_{j2}\rangle_{L^{2}(M)}& \\ \vdots &\\ \langle f,e_{jd_j} \rangle_{L^{2}(M)}\end{bmatrix}_{d_j\times 1}.
\end{equation}
If $e_{j}$ denotes the column vector with entries $e_{j1},e_{j2},\cdots e_{jd_j}$ then, for every $f\in L^{2}(M),$ it follows from the Parseval Theorem  that $f$ has a Fourier series representation of the type, 
\begin{equation}
f(\cdot)=\sum_{l=0}^{\infty}\langle \widehat{f}(l),e_{l}(\cdot)\rangle_{d_l}.
\end{equation}
where $\langle \cdot,\cdot\rangle_{d_l}$ is the usual inner product on $\mathbb{C}^{d_l}.$ If $\{  \sigma(l)\}_{l\in\mathbb{N}_0}$ is a sequence of matrices such that for every $j,$ $\sigma(j)$ is a square matrix of order $d_j,$ the $E$-invariant operator $A$ associated  to $\{\sigma(l)\}_{l\in\mathbb{N}_0}$ is the operator on $C^{\infty}(M)$ defined by
\begin{equation}
(Af)(\cdot)=\sum_{l=0}^{\infty}\langle \sigma(l)\widehat{f}(l),e_{l}(\cdot)\rangle_{d_l}.
\end{equation}
Since $\widehat{Af}(l)=\sigma(l)\widehat{f}(l)$ we refer to $E$-invariant operators $A$ as {\em $E$-Fourier multipliers}, or simply by Fourier multipliers, multipliers or $E$-multipliers. An important feature in the Delgado-Ruzhansky's approach is that there exist formulae between the symbol $\sigma(l)$ of a $E$-multiplier $A$ from $\mathscr{D}'(M)$  into $\mathscr{D}'(M)$ (we use the notation $\mathscr{D}'(M)$ for the space of distributions in $M$) and its Schwartz kernel $K(x,y)$ (see \cite{Hor2}). In fact we have
\begin{equation}
\sigma(l)=\int_{M}\int_{M}K(x,y)Q_{l}(x,y)^{*}dxdy
\end{equation}
and
\begin{equation}
K(x,y)=\sum_{l=0}^\infty\textnormal{Tr}(\sigma(l)Q_{l}(x,y))
\end{equation}
where $Q_{l}(x,y)=\overline{e_l(y)}e_l(x)^{t}.$ We end this section with the following remark of global symbols on compact Lie groups which will be useful in our analysis related with the Dixmier trace for Fourier multipliers.
\begin{remark}\label{mainremarkglobal}
If $A$ is left-invariant (with respect to the Laplacian) then we have two notions of global symbols for $A.$ One is defined in terms of the representation theory of the group $G$ and we will denote this symbol by $(\sigma_{A}(\xi))_{[\xi]\in \widehat{G}},$ and the other one is that defined when we consider the compact Lie group as a manifold, and in this case the symbol will be denoted by $(\sigma_{A}(l))_{l\in \mathbb{N}_0}.$ The relation of this two symbols was discovered in \cite[pag. 25]{DR5}. Now, we describe this relation. In the setting of compact Lie groups the unitary
dual being discrete, we can enumerate the unitary dual as $[\xi_{j}],$ for $j\in\mathbb{N}_0.$  In this way we fix the orthonormal basis
\begin{equation}
\{e_{jk}\}_{k=1}^{d_j}=\{ d_{\xi_{j}}^{\frac{1}{2}}(\xi_{j})_{il}  \}_{i,l=1}^{d_{\xi_j}}
\end{equation}
where $d_{j}=d_{\xi_j}^2.$ Then, we have the subspaces $H_{j}=\textnormal{span}\{(\xi_j)_{i,l}:i,l=1,\cdots,d_{\xi_j}\}.$ With the notation above we have
$$\sigma_{A}(l)=
\begin{bmatrix}
    \sigma_{A}({\xi_{l}}) & 0_{d_{\xi_{l}}\times d_{\xi_{l}} } & 0_{d_{\xi_{l}}\times     d_{\xi_{l}} } & \dots  & 0_{d_{\xi_{l}}\times d_{\xi_{l}} } \\
     0_{d_{\xi_{l}}\times d_{\xi_{l}} } & \sigma_{A}({\xi_{l}})& 0_{d_{\xi_{l}}\times d_{\xi_{l}} } & \dots  & 0_{d_{\xi_{l}}\times d_{\xi_{l}} } \\
    \vdots & \vdots & \vdots & \ddots & \vdots \\
   0_{d_{\xi_{l}}\times d_{\xi_{l}} } & 0_{d_{\xi_{l}}\times d_{\xi_{l}} } &0_{d_{\xi_{l}}\times d_{\xi_{l}} } & \dots  & \sigma_{A}({\xi_{l}})
\end{bmatrix}_{d_{l}\times d_{l}}.$$
As a consequence of this discussion we obtain the following relation:
\begin{equation}\label{equivalenceofmatrices'}\textnormal{Tr}(|\sigma_{A}(l)|)=d_{\xi_l}\textnormal{Tr}(|\sigma_{A}(\xi_l)|),
\end{equation}
where $|A|:=\sqrt{A^\ast A}$ denotes the absolute value of the matrix $A$.
\end{remark}

\subsection{Global Fourier multipliers on compact homogeneous manifolds}
In order to present our theorem for multipliers on compact homogeneous spaces, we recall some definitions on the subject. Compact homogeneous manifolds can be obtained if we consider the quotient of a compact Lie groups $G$ with one of its closed subgroups  $K$\,\, --there exists an unique differential structure for the quotient $M:=G/K$--. Examples of compact homogeneous spaces are spheres $\mathbb{S}^n\cong \textnormal{SO}(n+1)/\textnormal{SO}(n),$ real projective spaces $\mathbb{RP}^n\cong \textnormal{SO}(n+1)/\textnormal{O}(n),$ complex projective spaces $\mathbb{CP}^n\cong\textnormal{SU}(n+1)/\textnormal{SU}(1)\times \textnormal{SU}(n)$ and more generally Grassmannians $\textnormal{Gr}(r,n)\cong\textnormal{O}(n)/\textnormal{O}(n-r)\times \textnormal{O}(r).$

Let us denote by $\widehat{G}_0$ the subset of $\widehat{G},$  representations of $G$, that are class I with respect to the subgroup $K$. This means that $\pi\in \widehat{G}_0$ if there exists at least one non trivial invariant vector $a$ with respect to $K,$ i.e., $\pi(h)a=a$ for every $h\in K.$ Let us denote by $B_{\pi}$ to the vector space of these invariant vectors and $k_{\pi}=\dim B_{\pi}.$ Now we follow the notion of Multipliers as in \cite{RR}. Let us consider the class of symbols $\Sigma(M),$ for $M=G/K,$ consisting of those matrix-valued functions \begin{equation}
\sigma:\bigcup\widehat{G}\rightarrow\bigcup_{n=1}^{\infty}\mathbb{C}^{n\times n}\,\,\text{ such that }\,\,\sigma(\pi)_{ij}=0\textnormal{ for all } i,j>k_{\pi}.
\end{equation}
Following \cite{RR}, a Fourier multiplier $A$ on $M$ is a bounded operator on $L^2(M)$ such that for some $\sigma_{A}\in \Sigma(M)$ satisfies
\begin{equation}\label{Eq.MultFourHomg}
Af(x)=\sum_{\pi\in\widehat{G}_0}d_{\pi}\textnormal{Tr}(\pi(x)\sigma_{A}(\pi)\widehat{f}(\pi)), \,\,\text{ for }\,f\in C^{\infty}(M),
\end{equation}
where $\widehat{f}$ denotes the Fourier transform of the lifting $\dot{f}\in C^{\infty}(G)$ of $f$ to $G,$ given by $\dot{f}(x):=f(xK),$ $x\in G.$
\begin{remark}
For every symbols of a Fourier multipliers $A$ on $M,$ only the upper-left block in $\sigma_{A}(\pi)$ of the size $k_\pi\times k_\pi$ can be not the  trivial matrix zero. 
\end{remark}

\subsection{Global pseudo-differential operators on compact manifolds with boundary}\label{boundary}

The natural analogue of the algebra of pseudo-differential operators on a closed manifold is the Boutet de Monvel's algebra (BdM's algebra), which will be described  in this section (we will use a kernel integral description of such algebra, but also we give a local symbolic description), where we follow the references \cite{BdM,BdM-66I,ES01}. It was defined in \cite{Fedosov}, by B. Fedosov, F. Golsche, E. Leichtmann and E. Schrohe  a linear functional trace, which coincides with the noncommutative residue whether the boundary is empty, and it is showed that such functional is the unique trace on BdM's algebra. Moreover, R. Nest and E. Schrohe showed in \cite{Nest-Schrohe} that such functional does not coincide with the Dixmier trace (instead of the closed case), however, they provide a particular set of operators in Boutet de Monvel's algebra where these two traces coincide. 
\\
\\
Let $M$ be a $n$-dimensional compact smooth manifold with boundary $\partial M$ which is embedded on a compact closed manifold $\Omega$, (we use $n$ to denote the dimension of the manifolds underlying instead of $\varkappa$, as in the case of a closed manifold). Boutet de Monvel's approach comprises operators acting on smooth functions over $M$ and over $\partial M$ as follow. An matrix operator of the form
\begin{equation}\label{BdM.opertor}
 \textit{A}=
 \begin{bmatrix}
   {}_{P_{+}+G} & {}_{K} \\
   {}_{T} & {}_{S} \
 \end{bmatrix}
 :C^\infty(\bar{M})\oplus C^\infty(\partial M)
 \rightarrow C^\infty(\bar{M})\oplus C^\infty(\partial M),
\end{equation}
is said that is a Boutet de Monvel's operator of order $m\in\mathbb{Z}$ and type $r\in\mathbb{N}$ if
\begin{itemize}
\item $P_+:=r^+Pe^+$ is an operator, called a {\em truncated operator}, built from a classical pseudo-differential operator $P$ on $\Omega$ (here $r^+$  corresponds to the restriction map from functions on $\Omega$ to functions on $M$  and the last one $e^+$ refers to the extension by zero from functions on $M$ to functions $\Omega$),  and the operator $P$ satisfies the {\em transmission property} (see \cite{BdM-66I}), namely, if $\sigma^P(x,\xi)$ denotes the local symbol satisfying \eqref{Eq:asym-homg}, (on local coordinates $(x,\xi)=(x',x_n,\xi',\xi_n)\in\mathbb{R}^{n-1}\times \mathbb{R}\times \mathbb{R}^{n-1}\times \mathbb{R}$ on the boundary) of $P$, then for all multi-indices $\alpha,\beta$ and any $j=0,1,\cdots,$ holds 
\begin{equation*}
 \partial^\alpha_{\xi'}\partial^\beta_{x_n} \sigma^P_{j}(x',0,0,+1)=
 (-1)^{j-|\alpha|} \partial^\alpha_{\xi'}\partial^\beta_{x_n}\sigma^P_j(x',0,0,-1).
\end{equation*}

\item $K$ is called a {\em Poisson Operator} of order $m$, and it can be defined by the integral operator
 $$(Kv)(x)=\int_{\mathbb{R}^{n-1}}e^{ix'\xi'}\tilde{k}(x',x_n,\xi')\mathcal{F}[v](\xi')d\xi',$$ with $\tilde{k}\in S^{m-1}(\mathbb{R}^{n-1},\mathcal{S}(\mathbb{R}_+)),$ in the sense of (\ref{Eq:symbol-estima-BdM}). In this case, the symbol associated with the operator $K$ is defined by
\begin{equation}
k(x',\xi',\xi_n):=\mathcal{F}_{x_n\to \xi_n} (\tilde{k}(x',x_n,\xi'))
\end{equation}
(here $\mathcal{F}_{x\to y}(f)$ means the Fourier transform of $f$ which transform the variable $x$ to $y$) and it is assumed an expansion in homogeneous functions $k_{m-1-j}(x',\xi',\xi_n)$ with respect to $\xi$ (for $|\xi|\geq 1$) of degree $m-1-j$.

\item $T$ is called a {\em Trace Operator} of order $m$, a type $r\in\mathbb{N}$, and it has the form $T=\sum\limits_{j=0}^{r-1} S_J\gamma_j+T',$ where $S_j$ is pseudo-differential operator on $\mathbb{R}^{n-1}$ of order $m-j$, and $T'$ can be defined by the integral operator
$$(T' u)(x')=\int_{\mathbb{R}^{n-1}} t'(x',x_n,\xi')\mathcal{F}_{x'\rightarrow\xi'}[u](\xi',x_n)dx_n d\xi',$$
with $\tilde{t}\in S^m(\mathbb{R}^{n-1}\times\mathbb{R}^{n-1},\mathcal{S}(\mathbb{R}_+)),$ in the sense of (\ref{Eq:symbol-estima-BdM}), similarly the symbol of $T$ is defined by 
\begin{equation}
t(x',\xi',\xi_n):=\mathcal{F}_{x_n\to \xi_n}( \tilde{t}(x',x_n,\xi')).
\end{equation}

\item $G$ is called a singular Green operator (s.G.o.) of order $m$ and type $r$; it is defined by an operator having the form $G=\sum_{j=0}^{r-1}K_j\gamma_j+G',$ where $K_j$ are Poisson operators; $\gamma_ju(x'):=D^j_{x_n}u(x' ,x_n)|_{x_n=0}$ (defines trace operators), and $G'$ can be defined by the integral operator 
$$ (G' u)(x):=\int_{\mathbb{R}^{n-1}}e^{ix'\xi'} \Big(\int_0^\infty \tilde{g}(x',x_n,y_n,\xi')\mathcal{F}_{x'\rightarrow \xi'} [u](\xi',y_n)\Big)d\xi',$$ 
with $\tilde{g}\in S^{m}(\mathbb{R}^{n-1}\times\mathbb{R}^{n-1};\mathcal{S}(\overline{\mathbb{R}}_+\times\overline{\mathbb{R}}_+))$, i.e. $\tilde{g}\in C^\infty(\mathbb{R}^{n-1}\times\mathbb{R}^{n-1};\mathcal{S}(\overline{\mathbb{R}}_+\times\overline{\mathbb{R}}_+))$ such that for all $\alpha,\beta,l,l',k,k'$
\begin{equation}\label{Eq:symbol-estima-BdM}
\|x^k_{n}D^{k'}_{x_n}y^{l}_{n}D^{l'}_{y_n}D^{\alpha}_{\xi'}D^{\beta}_{x'}\tilde{g}(x',x_n,y_n,\xi') \|_{L^2(\mathbb{R}^2_{++})}\leq C(x') \langle \xi' \rangle^{m+1-k+k'-l+l'-|\alpha|} 
\end{equation}
for a suitable positive constant $C(x')$ depending of $x'$. The function $\tilde{g}$ has an asymptotic expansion $\tilde{g}\sim \sum_{j\geq 0} \tilde{g}_{m-j}$ where $\tilde{g}_{m-j}$ satisfy the following homogenety property: for  $\lambda\geq 1$ and $|\xi'|\geq 1$, 
\begin{equation}\label{Eq:homo.Green.kernel}
\tilde{g}_{d-j}\Big(x',\frac{1}{\lambda}x_n,\frac{1}{\lambda}y_n,\xi'\Big)=\lambda^{m+2-j}\tilde{g}_{d-j}(x',x_n,y_n,\xi').
\end{equation}
In this case, the symbol associated with the operator $G$ is defined by 
\begin{equation}
g(x',\xi',\xi_n,\eta_n):=\mathcal{F}_{x_n\to \xi_n}\overline{\mathcal{F}}_{y_n\to \eta_n}\, (e^+_{x_n}e^+_{y_n}\tilde{g}'(x',x_n,y_n,\xi')),
\end{equation}
here $(\overline{\mathcal{F}}\phi)(\xi):=(\mathcal{F}\phi)(-\xi)$ for any $\phi\in L^2(\mathbb{R})$, the homogeneity property (\ref{Eq:homo.Green.kernel}) corresponds to 
$$g_{d-j}(x',\lambda \xi',\lambda \xi_n,\lambda \eta_n)=\lambda^{d-j}g(x',\xi',\xi_n,\eta_n),\,\, |\xi|\geq 1,\, \lambda\geq 1.$$
\item Finally, $S$ is a classical pseudo-differential operator on $\mathbb{R}^{n-1}$ of order $m$.
\end{itemize}
We have described Boutet de Monvel's algebra acting on functions with values in $\mathbb{R}$ or $\mathbb{C}$, which our setting of interest, however, such description can be done for a general setting as vector bundles.\\
\\
Now, let us mention an important result about the set of the boundary operators described above.

\begin{theorem}[{\em L. Boutet de Monvel, \cite{BdM}}]\label{Thm.BdM}
 Matrix operators as in (\ref{BdM.opertor}) form an algebra, called the {\em Boutet de Monvel  algebra}, more precisely, the composition $AB$ of an operator $A\in \mathcal{B}$ with all entries of order $m$ and type $r$ with an operator $B\in\mathcal{B}$ with all entries of order $m'$ and type $r'$ yields an operator in $\mathcal{B}$ of order $m +m'$ and type $\max\{m'+r,r'\}$.
\end{theorem}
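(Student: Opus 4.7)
The plan is to compute $AB$ as a $2\times 2$ block matrix product and verify, entry by entry, that each resulting monomial composition lies in the appropriate building-block class (truncated pseudo-differential plus singular Green operator, Poisson, trace, or boundary pseudo-differential operator) of the stated total order $m+m'$ and total type $\max\{m'+r,r'\}$. With
\[
A=\begin{bmatrix} P_++G & K \\ T & S \end{bmatrix},\qquad
B=\begin{bmatrix} P'_++G' & K' \\ T' & S' \end{bmatrix},
\]
the four blocks of $AB$ expand into twelve monomials to be analyzed: $P_+P'_+$, $P_+G'$, $GP'_+$, $GG'$, $KT'$, $(P_++G)K'$, $KS'$, $T(P'_++G')$, $ST'$, $TK'$, $SS'$. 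The strategy is to first dispose of the ``routine'' compositions, then treat the one genuinely nontrivial piece, and finally track the type formula separately.

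Most of the compositions are routine consequences of the standard symbolic calculus on $\partial M$ together with the explicit integral representations given for $K$, $T$, and $G$. For instance, $SS'$ and $TK'$ reduce to pseudo-differential composition on $\partial M$ after integrating out the $x_n/\xi_n$ variables via a Fourier transform; $KS'$ and $P_+K'$ produce Poisson operators by composing the $\xi'$-integral with the appropriate pseudo-differential symbol; $KT'$ produces a singular Green operator whose symbol is the tensor product of the normal-frequency symbols of $K$ and $T'$; and compositions of a s.G.o.\ with $K$, $T$, $P_+$, or another s.G.o.\ stay within the s.G.o.\ class after a Schwartz-class convolution in the normal variables. In each case the order addition $m+m'$ is automatic, and no delicate point arises.

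The main obstacle, and the technical heart of the theorem, is the composition $P_+P'_+$ of two truncated operators. This is \emph{not} equal to $(PP')_+$; the defect is
\[
P_+ P'_+ - (PP')_+ = -\, r^+ P e^- r^- P' e^+,
\]
and one must identify the right-hand side as a singular Green operator of order $m+m'$ and type zero. The key tool is a Wiener--Hopf ($H^\pm$) decomposition of the full symbols $\sigma^P(x,\xi)$ and $\sigma^{P'}(x,\xi)$ in the normal covariable $\xi_n$, and the transmission property of $P$ and $P'$ is exactly the hypothesis that guarantees the required holomorphic extensions into the two half-planes and the polynomial behavior at $\xi_n=\pm\infty$. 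Carrying out this decomposition in local boundary coordinates expresses the leftover as an oscillatory integral whose amplitude $\tilde{g}(x',x_n,y_n,\xi')$ satisfies the s.G.o.\ symbol estimates \eqref{Eq:symbol-estima-BdM} and the homogeneity property \eqref{Eq:homo.Green.kernel}. This is the classical Boutet de Monvel argument, and every other step of the theorem depends on it.

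For the type formula, I would decompose $T=\sum_{j=0}^{r-1}S_j\gamma_j + T'$ and $G=\sum_{j=0}^{r-1}K_j\gamma_j + G'$ with $T'$ and $G'$ of type zero, and count carefully how many normal boundary derivatives survive after composition. Each $\gamma_j$ coming from $A$ is absorbed by the adjacent entry of $B$, and the most a $\gamma_j$ can contribute after composition with an order-$m'$ operator is a trace operator of type $j+m'\leq r-1+m'$, so the type contributed by $A$ is at most $r+m'$; independently, the trace- and singular-Green-blocks inside $B$ contribute type $r'$, which passes through the (type-preserving) action of $A$. The two contributions combine to $\max\{m'+r,r'\}$, exactly as claimed, and this bound is sharp on generic symbols.
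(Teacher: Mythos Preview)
The paper does not prove this theorem; it is stated in the preliminaries (Section~\ref{boundary}) as a classical result of Boutet de Monvel and simply cited to \cite{BdM}, so there is no ``paper's own proof'' to compare against.

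For what it is worth, your outline is the standard route taken in \cite{BdM} and the later expositions \cite{GG96,ES01}: expand the block product, handle the routine compositions by symbolic calculus in the tangential variables together with Schwartz-class convolution in the normal variable, and isolate the one genuinely delicate point---that the ``leftover'' $L(P,P'):=P_+P'_+-(PP')_+$ is a singular Green operator of order $m+m'$ and type zero, proved via the transmission property and a Wiener--Hopf (Paley--Wiener $H^\pm$) factorization in $\xi_n$. Your type bookkeeping is also along the correct lines. Two small quibbles: you announce twelve monomials but list eleven, and two of those are still sums, so the fully expanded count is thirteen; and the type computation as written is only a sketch---making it rigorous requires the commutation identities for $\gamma_j$ with a transmission-property operator (e.g., $\gamma_j P'_+ = \sum_{k\le j} S_{jk}\gamma_k + T_j$ with $T_j$ of type zero), which is where the bound $m'+r$ actually comes from.
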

Let us mention that Boutet de Monvel's algebra contains the parametrix operators of elements in such algebra  (whether this exists). A parametrix here means that $AB-I = S_1$ and $BA-I = S_2$ are regularizing operators; their types are $0$ and $m,$ respectively.\\ 
\\
On the one hand, in \cite{Fedosov} it is defined a linear functional on $\mathcal{B}$, which extends the noncommutative residue to the case of a manifold with boundary, 
\begin{eqnarray}\label{Eq:Noncomm.res.boundary}
\textnormal{res}\,(A)&=&\frac{(2\pi)^{-n}}{ n }\int_M\int_{|\xi|=1} p_{-n}(x,\xi)\, d\xi dx\nonumber\\ 
&&+\frac{(2\pi)^{-n+1}}{ n } \int_{\partial M}\int_{|\xi'|=1} \textnormal{tr}_n(g_{-n})(x',\xi')+s_{1-n}(x',\xi')\, d\xi' dx',\,\,
\end{eqnarray}
where $\textnormal{tr}_n(g_{-n})(x',\xi'):=\int_{0}^\infty \tilde{g}_{-n}(x',x_n,x_n,\xi')\, dx_n$. On the other hand, R. Nest and E. schrohe in \cite{Nest-Schrohe} study the noncommutative trace and the Dixmier trace for suitable sets of the Boutet de Monvel algebra. The authors obtain a first result about the Dixmier class. 

\begin{proposition}[{\em  \cite{Nest-Schrohe}, Proposition 2.1.}]
A bounded operator on the zero order Sobolev space $L^2(M)$ with range in the $n$-order Sobolev space $H^n(M)$ is an
element of $\mathcal{L}^{(1,\infty)}(M)$; moreover, if its range even is contained in $H^{n+1}(M)$ then it is trace class.
\end{proposition}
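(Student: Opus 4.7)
The plan is to reduce the Dixmier and trace-class membership of $A$ to the decay of the singular values of the Sobolev embedding $H^n(M)\hookrightarrow L^2(M)$, which in turn follows from Weyl's eigenvalue asymptotics for any positive self-adjoint elliptic operator defining the Sobolev scale.

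First I would apply the closed graph theorem: the hypothesis that $A:L^2(M)\to L^2(M)$ is bounded with range contained in $H^n(M)$ forces $A$ to be bounded as an operator $L^2(M)\to H^n(M)$. Hence $A$ factors as $A=\iota_n\circ\widetilde{A}$, where $\widetilde{A}\in\mathcal{L}(L^2(M),H^n(M))$ and $\iota_n:H^n(M)\hookrightarrow L^2(M)$ is the Sobolev embedding. Next I would fix a positive self-adjoint elliptic operator $E$ of order $m$ on $M$ (for instance a self-adjoint realisation of $I+\Delta_M$ under an appropriate boundary condition, or the operator $L$ of Section \ref{boundary} when the Weyl hypothesis \eqref{weyllawboundary} is in force) so that the Sobolev scale is described by the functional calculus: $H^s(M)=\textnormal{Dom}(E^{s/m})$ with equivalent norm $\|u\|_{H^s(M)}\asymp\|E^{s/m}u\|_{L^2(M)}$. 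By Weyl's law $\lambda_k(E)\sim c\,k^{m/n}$, and a direct spectral calculation in the eigenbasis of $E$ shows that the singular values of $\iota_s$ satisfy
\begin{equation*}
s_k(\iota_s)=\lambda_k(E)^{-s/m}\asymp k^{-s/n}.
\end{equation*}

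Combining this with the inequality $s_k(A)\leq \|\widetilde{A}\|_{L^2\to H^n}\,s_k(\iota_n)$ gives $s_k(A)\leq Ck^{-1}$, so that
\begin{equation*}
\sum_{k=1}^{N}s_k(A)\leq C\sum_{k=1}^{N}\frac{1}{k}=O(\log N),
\end{equation*}
which is precisely the defining condition \eqref{dixmier} for membership in $\mathcal{L}^{(1,\infty)}(L^2(M))$. For the second assertion the same argument with $s=n+1$ yields $s_k(A)\leq Ck^{-(n+1)/n}$, and since $(n+1)/n>1$ the sum $\sum_{k\geq 1}s_k(A)$ converges; hence $A$ is trace class.

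The main obstacle is the careful identification of the Sobolev scale with the domain of a fractional power of a self-adjoint elliptic operator in the presence of a boundary; once the relevant self-adjoint realisation is chosen so that the functional calculus is available, the singular-value asymptotics of $\iota_n$ follow from Weyl's law (which is essentially the content of \eqref{weyllawboundary} in the setting of the proposition), and the rest of the argument reduces to the standard multiplicative inequality for singular values of products of compact operators.
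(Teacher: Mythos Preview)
The paper does not give its own proof of this proposition: it is simply quoted from \cite{Nest-Schrohe} without argument. Your proof is correct and is essentially the standard one found in the original reference---factor $A$ through the Sobolev embedding $\iota_n:H^n(M)\hookrightarrow L^2(M)$ via the closed graph theorem, read off the singular values of $\iota_n$ from Weyl asymptotics for a self-adjoint elliptic operator defining the scale, and apply $s_k(\iota_n\widetilde{A})\leq s_k(\iota_n)\Vert\widetilde{A}\Vert$. Your caveat about the boundary case is well placed: the identification $H^s(M)=\textnormal{Dom}(E^{s/m})$ requires a choice of self-adjoint elliptic boundary problem for which the Weyl law holds, which is exactly the hypothesis \eqref{weyllawboundary} used elsewhere in the paper.
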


If $\mathcal{D}^{m}$ denotes the set of  matrix-operator $A$ in $\mathcal{B}$, as (\ref{BdM.opertor}),  for which  $P$ is an operator of order $m\in\mathbb{Z}$; $G$ is a. s.G.o. of order $m$ and type zero, $K$ is of order $m+1$ and type zero, and $S$ is an operator on the boundary of order $m+1$. In \cite{Nest-Schrohe} is showed that any operator $A$ in $\mathcal{D}^{-n}$ defines a bounded map $A:L^2(M)\oplus L^2(\partial M)\to H^{-n}(M)\oplus H^{1-n}(\partial M)$. Therefore, $\mathcal{D}^{-n}$ forms a subset of the Dixmier class $\mathcal{L}^{(1,\infty)}(H)$  where $H=L^2(M;E)\oplus L^2(\partial M;F)$. Moreover, the authors show in the following result a explicit formula for the Dixmier trace  in terms of the  terms appearing in the noncommutative residue (cf. (\ref{Eq:Noncomm.res.boundary})), for operators in $\mathcal{D}^{-n}$.

\begin{theorem}\label{Thm.N-Sch}[{\em \cite{Nest-Schrohe},Theorem 2.7.}]
For $A\in \mathcal{D}^{-n}$ acting on $H=L^2(M;E)\oplus L^2(\partial M;F)$, we have 
\begin{equation}\label{Eq.Dixmier-trace-Bdy}
\textnormal{Tr}_\omega (A)=\frac{(2\pi)^{-n}}{ n }\int_M \int_{|\xi|=1} p_{-n}(x,\xi)\, d\xi  dx+\frac{(2\pi)^{-n+1} }{n-1}\int_{\partial M} \int_{|\xi'|=1} s_{-n+1}(x',\xi')\, d\xi' dx'. 
\end{equation}
In particular, $\textnormal{Tr}_\omega (A)$ is independent of the averaging procedure $\omega$.
\end{theorem}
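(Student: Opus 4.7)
The plan is to decompose $A \in \mathcal{D}^{-n}$ into its four block-entry summands, use linearity of $\textnormal{Tr}_{\omega}$ to split the Dixmier trace into four independent contributions, show that three of them lie in the trace class $\mathcal{L}^{1}(H)$ (where $\textnormal{Tr}_{\omega}$ vanishes), and evaluate the remaining two by applying Connes' trace theorem twice: once to the truncated interior operator coming from $P$ on $M$, and once to the boundary pseudo-differential operator $S$ on the closed manifold $\partial M$. First, one writes
\[
A = \begin{bmatrix} P_{+} & 0 \\ 0 & 0 \end{bmatrix} + \begin{bmatrix} G & 0 \\ 0 & 0 \end{bmatrix} + \begin{bmatrix} 0 & K \\ T & 0 \end{bmatrix} + \begin{bmatrix} 0 & 0 \\ 0 & S \end{bmatrix}.
\]

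Within the class $\mathcal{D}^{-n}$, the Poisson and trace operators $K$, $T$ have order $-n+1$ and the singular Green operator $G$ has order $-n$ and type zero. Standard mapping properties of Boutet de Monvel operators (combined with Weyl-law asymptotics on $\partial M$) yield that $K^{\ast}K$ is a classical $\Psi$DO on $\partial M$ (of dimension $n-1$) of order $-(2n-1)$; the resulting eigenvalue decay is $\lambda_{j}(K^{\ast}K) = O(j^{-(2n-1)/(n-1)})$, so $s_{j}(K) = O(j^{-(2n-1)/(2(n-1))})$ with exponent exceeding $1$, placing $K \in \mathcal{L}^{1}$. An analogous computation puts $T \in \mathcal{L}^{1}$, and the symbol-kernel structure of $G$ together with the type-zero hypothesis reduces $G^{\ast}G$ to a trace-class $\Psi$DO on $\partial M$, hence $G \in \mathcal{L}^{1}(L^{2}(M))$. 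Since $\textnormal{Tr}_{\omega}$ vanishes on $\mathcal{L}^{1}$, the $G$-block and the off-diagonal $(K,T)$-block contribute zero.

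For the boundary block, $S$ is a classical pseudo-differential operator of order $-(n-1)$ on the closed manifold $\partial M$ of dimension $n-1$. Applying Connes' trace theorem on $\partial M$ (which identifies the Dixmier trace of a critical-order classical $\Psi$DO on a closed manifold with its noncommutative residue) gives
\[
\textnormal{Tr}_{\omega}(S) = \frac{1}{(n-1)(2\pi)^{n-1}} \int_{\partial M} \int_{|\xi'|=1} s_{-(n-1)}(x',\xi')\, d\xi'\, dx',
\]
which is precisely the second summand of \eqref{Eq.Dixmier-trace-Bdy} and is automatically $\omega$-independent.

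The main obstacle is the truncated operator $P_{+} = r^{+} P e^{+}$, which is not itself a pseudo-differential operator on $M$, so Connes' theorem does not apply directly. My strategy would be to introduce a smooth interior cutoff $\chi_{\varepsilon}$ equal to $1$ outside an $\varepsilon$-collar of $\partial M$ and supported in the interior, write $P_{+} = \chi_{\varepsilon} P \chi_{\varepsilon} + R_{\varepsilon}$, and observe that on the support of $\chi_{\varepsilon}$ the maps $e^{+}$ and $r^{+}$ act as the identity; consequently $\chi_{\varepsilon} P \chi_{\varepsilon}$ is a genuine classical $\Psi$DO of order $-n$ on a closed neighborhood, and Connes' theorem yields
\[
\textnormal{Tr}_{\omega}(\chi_{\varepsilon} P \chi_{\varepsilon}) = \frac{1}{n(2\pi)^{n}} \int_{M} \chi_{\varepsilon}(x)^{2} \int_{|\xi|=1} p_{-n}(x,\xi)\, d\xi\, dx.
\]
The transmission property of $P$ enters crucially here: it ensures that the collar remainder $R_{\varepsilon}$ has enough regularity to be split into a trace-class correction plus a piece whose Dixmier trace tends to $0$ as $\varepsilon \downarrow 0$. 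Passing to the limit $\chi_{\varepsilon} \to \mathbf{1}_{M}$ via dominated convergence on the symbolic integral produces the first summand of \eqref{Eq.Dixmier-trace-Bdy}. Summing the four contributions gives the identity, and since each surviving term is computed by Connes' theorem (whose output is a classical integral of homogeneous symbols), the $\omega$-independence propagates to $\textnormal{Tr}_{\omega}(A)$. The hardest part will be the quantitative control of $R_{\varepsilon}$ near the collar --- precisely the place where the transmission property allows one to separate the pseudo-differential interior content of $P_{+}$ from the residual boundary corrections.
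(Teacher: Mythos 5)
This statement is quoted by the paper from Nest--Schrohe \cite{Nest-Schrohe} (their Theorem 2.7) and is not proved in the text, so your proposal must stand on its own; the original argument rests on the known spectral asymptotics of the various operator classes in the Boutet de Monvel calculus rather than on a cutoff-plus-Connes scheme. Your skeleton (block decomposition, Connes' theorem \ref{connestheorem} on the closed manifold $\partial M$ for the $S$-block, vanishing of the $G$, $K$, $T$ contributions, localization for $P_+$) is reasonable, but the decisive step --- the interior block $P_+$ --- is exactly where the proposal is an assertion rather than an argument. You write $P_+=\chi_\varepsilon P\chi_\varepsilon+R_\varepsilon$ and claim the transmission property lets you split $R_\varepsilon$ into a trace-class part plus a part whose Dixmier trace tends to $0$. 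The transmission property is not the operative mechanism (it only ensures $P_+$ stays in the Boutet de Monvel calculus; the value of $\textnormal{Tr}_\omega(r^+Pe^+)$ for order $-n$ does not depend on it), and nothing in your sketch actually controls $\textnormal{Tr}_\omega(R_\varepsilon)$. Since $\textnormal{Tr}_\omega$ is only continuous with respect to the $\mathcal{L}^{(1,\infty)}$-norm, what you need is a quantitative bound of the type $\Vert (1-\chi_\varepsilon)P_+\Vert_{\mathcal{L}^{(1,\infty)}}\to 0$ (together with the adjoint term $\chi_\varepsilon P_+(1-\chi_\varepsilon)$), and this follows neither from Connes' theorem nor from ``dominated convergence on the symbolic integral'': the $\mathcal{L}^{(1,\infty)}$-norm of a critical-order operator with coefficient supported in a collar of volume $\varepsilon$ requires a Cwikel/Birman--Solomyak type estimate (control of $\Vert M_f(1-\Delta)^{-n/2}\Vert_{\mathcal{L}^{(1,\infty)}}$ by a suitable norm of $f$), an ingredient you neither state nor cite; note that the naive route fails because the symbol seminorms of $(1-\chi_\varepsilon)P$ blow up as $\varepsilon\downarrow 0$, so one cannot just quote a symbol-dependent bound. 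Alternatively one can bypass the cutoff altogether by invoking the two-sided singular-value asymptotics $j\,s_j(P_+)\to\frac{1}{n(2\pi)^n}\int_M\int_{|\xi|=1}p_{-n}(x,\xi)\,d\xi\,dx$ for truncations of negative-order classical operators (Grubb, Birman--Solomyak), which is essentially how the original proof proceeds; without one of these two inputs your treatment of $P_+$ has a genuine gap.

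The remaining blocks are right in outline but partly misjustified. For $G$: the operator $G^{*}G$ is again a singular Green operator on $M$, not a pseudo-differential operator on $\partial M$; the correct reason the $G$-block contributes nothing is Grubb's estimate $s_j(G)=O(j^{-n/(n-1)})$ for a singular Green operator of order $-n$ and class zero (its spectral behaviour is governed by the boundary dimension $n-1$), which makes $G$ trace class --- and this is precisely why the $\textnormal{tr}_n(g_{-n})$ term present in the noncommutative residue \eqref{Eq:Noncomm.res.boundary} is absent from \eqref{Eq.Dixmier-trace-Bdy}. For the off-diagonal block, in $\mathcal{D}^{-n}$ the trace operator $T$ has order $-n$ (not $-n+1$ as you state); with the correct orders your singular-value count for $K$ and $T$ does give trace class, but in fact trace-classness is not even needed: conjugating $\bigl(\begin{smallmatrix}0&K\\T&0\end{smallmatrix}\bigr)$ by the unitary $\mathrm{diag}(1,-1)$ changes its sign, so its Dixmier trace vanishes as soon as it lies in $\mathcal{L}^{(1,\infty)}$. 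The evaluation of the $S$-block by Connes' theorem on $\partial M$, giving the second summand of \eqref{Eq.Dixmier-trace-Bdy} with the coefficient $\frac{(2\pi)^{-n+1}}{n-1}$, is correct.
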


\begin{remark}
The Theorem \ref{Thm.N-Sch} shows that Conne's theorem does not hold for the Boutet de Monvel algebra, see (\ref{Eq:Noncomm.res.boundary}).
\end{remark}
Given $P:C^\infty(M)\to C^\infty(M)$ a differential operator of order $m>0$. In general one is interested in solve either the in-homogeneous problem
\begin{equation}\label{Eq.no-homg}
Pu=f\,\,\text{ on } M;\,\,\,\, Tu=g\,\,\text{ at }\,\, \partial M,
\end{equation}
for $f$ and $g$ given, or else the homogeneous problem
\begin{equation}\label{Eq.semi-homg}
Pu=f\,\,\text{ on } M;\,\,\,\, Tu=0\,\,\text{ at }\,\, \partial M.
\end{equation}
In order to treat problem (\ref{Eq.no-homg}), one consider operator of the form 
$$ \textit{A}=
 \begin{bmatrix}
   {}_{P}  \\
   {}_{T}  \
 \end{bmatrix}:H^m(M)\to L^2(M)\oplus H^{m-m'-1/2}(\partial M),$$ 
where $m'$ denotes de order of the trace operator $T$ and it is such that $m'<m$. For treat (\ref{Eq.semi-homg}) one studies the {\em realization} $P_T$ which is defined as the unbounded operator $P_T$ on $L^2(M)$, acting like $P$ over $$\mathcal{D}(P_T)=\{ u\in H^m(M)\, |\, Tu=0 \}.$$
\\
The ellipticity implies that there is a parametrix to $P_T$ in Boutet de Monvel’s calculus. It is of the form $B = (Q_++G\,\,\,\,K);$ the pseudo-differential part $Q$ is a parametrix to $P$, while $G$ is a singular Green
operator of order $−m$ and type zero and $K$ is a potential operator of order $−m$.

\begin{proposition}[{\em \cite{Nest-Schrohe}, Theorem 3.2.}]\label{Prop.parametrix}
Let $P_T$ be the above elliptic boundary value problem and
$B = (Q_++G\,\,\,\, K)$ its parametrix  . Then there is a regularizing singular Green operator $G_0$ of type zero, i.e. an integral operator with smooth kernel
on $M\times M$, such that $R=(Q_++G+G_0)$ has the following properties:
\begin{itemize}
\item[i.] $R$ maps $L^2(M)$ to $\mathcal{D}(P_T)$ and
\item[ii.] $RP_T-I$ and $P_TR-I$ are finite rank operators whose range consists of smooth functions.
\end{itemize}
\end{proposition}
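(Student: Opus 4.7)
The plan is to correct the given parametrix $B=(Q_{+}+G\,\,\,K)$ in two steps: first, by a regularizing singular Green operator of type zero to secure the homogeneous boundary condition, and second, by a finite-rank smoothing correction of type zero to replace the resulting regularizing remainders by genuine finite-rank ones. Writing $A=\begin{pmatrix}P\\T\end{pmatrix}$, the parametrix identities $AB-I$ and $BA-I$ regularizing (Theorem \ref{Thm.BdM}) decompose into entrywise statements; the ones we actually need are that $S_{0}:=T(Q_{+}+G)$ is a regularizing trace operator, that $TK-I$ is regularizing, and that $(Q_{+}+G)P+KT-I$ is regularizing on $H^{m}(M)$.

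In the first step I would construct a regularizing singular Green operator $G_{0}^{(1)}$ of type zero satisfying $TG_{0}^{(1)}=-S_{0}$. A natural choice is $G_{0}^{(1)}:=-KS_{0}$, which by Theorem \ref{Thm.BdM} is an s.G.o.\ of type zero and order $-\infty$ (a Poisson of type zero composed with a regularizing trace of type zero); one then absorbs the residual error coming from $TK-I\neq 0$ by a geometric Neumann-type iteration. Setting $R^{(1)}:=Q_{+}+G+G_{0}^{(1)}$, continuity of $Q_{+}+G$ as a map $L^{2}(M)\to H^{m}(M)$ together with $TR^{(1)}=0$ immediately yields $R^{(1)}L^{2}(M)\subset \mathcal{D}(P_{T})$. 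Combining $TR^{(1)}=0$ with the identity $(Q_{+}+G)P+KT-I=\textnormal{reg.}$ shows that $R^{(1)}P_{T}-I$ is regularizing on $\mathcal{D}(P_{T})$, and a symmetric computation, using that $P$ is a differential operator and hence preserves smoothness under composition with $G_{0}^{(1)}$, shows that $P_{T}R^{(1)}-I$ is regularizing on $L^{2}(M)$.

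In the second step I would upgrade "regularizing" to "finite rank" by exploiting the Fredholm property. Ellipticity of the boundary value problem gives that $P_{T}$ is Fredholm on $L^{2}(M)$, with finite-dimensional kernel $N_{1}\subset C^{\infty}(M)\cap \mathcal{D}(P_{T})$ and cokernel $N_{2}\subset C^{\infty}(M)$, both consisting of smooth functions by elliptic boundary regularity. I would then add to $G_{0}^{(1)}$ a finite-rank regularizing operator $G_{0}^{(2)}$ whose smooth kernel is constructed from bases of $N_{1}$ and $N_{2}$ and the associated $L^{2}$-orthogonal projections $\pi_{N_{1}},\pi_{N_{2}}$, chosen precisely to absorb the regularizing remainders of step one modulo these projections. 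The resulting $R:=Q_{+}+G+G_{0}$ with $G_{0}:=G_{0}^{(1)}+G_{0}^{(2)}$ then satisfies $RP_{T}-I=-\pi_{N_{1}}$ and $P_{T}R-I=-\pi_{N_{2}}$, giving property (ii), while $TG_{0}^{(2)}=0$ preserves property (i).

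The main obstacle is this final step: one must verify that the finite-rank correction $G_{0}^{(2)}$ really lives in the class of regularizing singular Green operators of type zero and simultaneously does \emph{not} reintroduce a boundary defect, i.e.\ that $TG_{0}^{(2)}=0$. This forces $G_{0}^{(2)}$ to have a smooth kernel on $M\times M$ vanishing to infinite order in the first variable at $\partial M$, which can be arranged by an appropriate cut-off away from $\partial M$ supported by the smoothness of the bases of $N_{1}$ and $N_{2}$. Checking that such a cut-off correction is indeed an s.G.o.\ of type zero in Boutet de Monvel's sense, and that after this cancellation the remainders are \emph{exactly} $-\pi_{N_{1}}$ and $-\pi_{N_{2}}$ rather than merely regularizing modulo them, is the technical heart of the argument.
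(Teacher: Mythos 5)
The paper does not actually prove this statement: it is imported verbatim from \cite{Nest-Schrohe} (whose argument rests on Grubb's calculus \cite{GG96}), so your attempt can only be compared with that standard proof. Judged against it, your overall strategy is the right one (use the Boutet de Monvel parametrix, the Fredholm property of $P_T$ and elliptic boundary regularity, and correct $Q_++G$ by a regularizing singular Green operator of type zero so that the remainders become the finite-rank projections onto $\ker P_T$ and a complement of the range), but the two steps you actually spell out do not work as stated.

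First, property (i) needs $TR=0$ exactly, not modulo smoothing. Your correction $G_0^{(1)}=-KS_0$ only gives $TR^{(1)}=(I-TK)S_0$, and the proposed ``geometric Neumann-type iteration'' cannot be run: $TK-I$ is regularizing but in no sense small in operator norm, so the Neumann series need not converge and $TK$ need not be invertible at all; hence the claimed identity $TR^{(1)}=0$, on which the rest of your first step relies, is not obtained. Second, the step you yourself call the technical heart, namely that after adding a finite-rank smoothing correction the remainders are \emph{exactly} $-\pi_{N_1}$ and $-\pi_{N_2}$, is precisely the content of the proposition and is left unproved; moreover the cut-off you introduce to force $TG_0^{(2)}=0$ is both unnecessary (if the range of $G_0^{(2)}$ lies in $N_1\subset\mathcal{D}(P_T)$ then $TG_0^{(2)}=0$ automatically) and harmful, since truncating the kernel near $\partial M$ destroys the exact algebraic identities you are trying to arrange. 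The standard way to close both gaps simultaneously is to \emph{define} $R$ as the generalized inverse of the Fredholm operator $P_T$, so that (i) and (ii) hold by construction with $RP_T-I=-\pi_{\ker P_T}$ and $P_TR-I=-\pi_{\operatorname{coker}}$, both finite rank with smooth range by elliptic regularity, and then to prove that $G_0:=R-(Q_++G)$ is a regularizing s.G.o.\ of type zero: since $Tu=0$ on $\mathcal{D}(P_T)$ one has $(Q_++G)P_T=I+\mathcal{R}_1$ there, whence $(Q_++G)-R=\mathcal{R}_1R+(Q_++G)\pi_{\operatorname{coker}}$, and a dual computation with the parametrix of the adjoint problem yields smoothness of the kernel in both variables. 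Without this (or an equivalent) argument your construction does not establish the proposition.
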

It shown in Corollary 3.2. in \cite{Nest-Schrohe} that the parametrix $R$ in the above theorem is unique up to a regularizing singular Green operator of type $0$.


\begin{proposition}[{\em \cite{Nest-Schrohe}, Corollary 3.2.}]\label{Thm.Dix-res-NSCH}
If $m=n$ and 
$ \textit{A}=
 \begin{bmatrix}
   {}_{P_{+}}  \\
   {}_{T}  \
 \end{bmatrix}
$
is elliptic and positive, the Dixmier trace for an arbitrary parametrix $R$ to the operator $P_T$, we have
$$\textnormal{Tr}_\omega(R)=\frac{1}{(2\pi)^n n} \int_M\int_{|\xi|=1} (\sigma^P_{n}(x,\xi))^{-1}\,d\xi dx,$$
where $\sigma_n^P(x,\xi)$ denotes the homogeneous component for $\sigma^P(x,\xi)$ of degree $n$. The expression is the same for all parametrices and independent of the choice
of the boundary condition. Moreover, it coincides with the noncommutative residue $\textnormal{res}\,(R)$ for $R$.
\end{proposition}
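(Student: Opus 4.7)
The plan is to reduce every assertion in the theorem to a spectral/diagonalization argument applied to the $L$-global symbol, and then, for the final paragraph, to marry this with the Nest–Schrohe formula (Proposition \ref{Thm.Dix-res-NSCH}) and the parametrix construction (Proposition \ref{Prop.parametrix}). The starting observation is that, because $L$ is self-adjoint, its normalized eigenfunctions $\{u_{\xi_l}\}_{l\in\mathbb{N}_0}$ form an orthonormal basis of $L^2(M)$ adapted to the boundary conditions. The Fourier-multiplier hypothesis $\mathcal{F}_L(Af)(\xi)=\sigma_{A,L}(\xi)\mathcal{F}_L(f)(\xi)$ then forces $Au_{\xi_l}=\sigma_{A,L}(\xi_l)u_{\xi_l}$, so $A$ is a normal (in fact diagonal) operator whose singular values are exactly the numbers $\{|\sigma_{A,L}(\xi_l)|\}_{l\in\mathbb{N}_0}$, counted with multiplicity and rearranged in decreasing order.

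From this diagonalization, the first and third bullets are almost formal. The characterization of Dixmier traceability via $\tau'(A)<\infty$ and of $\mathcal{L}^{(p,\infty)}$-membership via $\gamma_p'(A)<\infty$ is obtained by substituting the eigenvalues $|\sigma_{A,L}(\xi_l)|$ into the definitions \eqref{dixmier} and \eqref{lpinftydef}; the discrepancy between the enumeration by $l$ and the genuine decreasing rearrangement is controlled because replacing a monotone reindexing produces only a bounded perturbation of $\log N$ (respectively of $N^{1/p-1}$). The claim $\tau'(A)=\textnormal{Tr}_\omega(A)$ for positive $A$ then follows from the standard Dixmier-trace formula applied to the explicit sequence of eigenvalues. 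For the Weyl-law refinement I would use summation-by-parts: assuming $N_L(\lambda)=C_0\lambda^{\dim M}+O(\lambda^{\dim M-1})$, the change of variable $N\mapsto\lambda=N^{1/\dim M}$ converts $\sum_{l\le N}$ into $\sum_{l:|\lambda_{\xi_l}|^{1/m}\le N}$, and simultaneously sends $\log N$ to $\dim M\cdot\log N$, which is exactly the source of the $1/\dim M$ prefactor. The same rescaling argument gives the equivalence of norms for $\gamma_p'(A)$.

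The moreover part is proved by choosing the auxiliary operator from Section \ref{boundary} to be $L:=P_T$ itself. Since $A=\begin{bmatrix}P_+\\ T\end{bmatrix}$ is elliptic and positive in the Boutet de Monvel algebra of order $n$, the realization $P_T$ is a positive self-adjoint operator with purely discrete spectrum and smooth eigenfunctions; its eigenvalues obey the Weyl law, so the convergence condition $\tau(s_0,L)<\infty$ required by the Ruzhansky–Tokmagambetov framework is automatic. By Proposition \ref{Prop.parametrix} any parametrix $R$ of $P_T$ satisfies $RP_T-I$ and $P_TR-I$ are finite-rank with smooth range; this means that on the basis $\{u_{\xi_l}\}$ we have $R u_{\xi_l}=(\sigma_{P,L}(\xi_l))^{-1}u_{\xi_l}$ modulo a trace-class smoothing error. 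Trace-class operators lie in the kernel of every Dixmier trace $\textnormal{Tr}_\omega$, so applying the first bullet to the diagonal part of $R$ yields
\begin{equation*}
\textnormal{Tr}_\omega(R)=\lim_{N\to\infty}\frac{1}{\log N}\sum_{l\le N}|(\sigma_{P,L}(\xi_l))^{-1}|.
\end{equation*}
The equality $\textnormal{Tr}_\omega(R)=\textnormal{res}(R)$ together with the stated independence of the parametrix, of the boundary condition, and of $\omega$, are then precisely the content of Proposition \ref{Thm.Dix-res-NSCH} applied to $R\in\mathcal{D}^{-n}$ (the singular-Green part of $R$ is of type zero and its pseudo-differential part is of order $-n$, so $R$ does belong to that class).

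The delicate step, and in my view the principal obstacle, is precisely the passage from the operator-theoretic parametrix $R$ to its $L$-global symbol: one must justify that $R$ really is, modulo a trace-class perturbation, the $L$-Fourier multiplier with symbol $(\sigma_{P,L}(\xi_l))^{-1}$. The singular Green and smoothing parts of $R$ produced by Boutet de Monvel's algebra do not act diagonally in the basis $\{u_{\xi_l}\}$ a priori, and the identification requires (i) checking that these non-multiplier contributions either cancel after composing with $P_T$ or contribute a trace-class operator invisible to $\textnormal{Tr}_\omega$, and (ii) verifying that the Ruzhansky–Tokmagambetov quantization is compatible with the composition in BdM's algebra when $L=P_T$, so that the symbol of a parametrix is indeed the inverse of the symbol of $P$. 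Once these two compatibility statements are established, the remainder of the argument consists of bookkeeping, and the independence from $\omega$, from $T$, and from the particular choice of parametrix is inherited directly from Proposition \ref{Thm.Dix-res-NSCH}.
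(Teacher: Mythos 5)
Your outline does not actually prove the statement you were given. The statement is the Nest--Schrohe result (quoted in the paper as Proposition \ref{Thm.Dix-res-NSCH}): for an elliptic positive $A=\begin{bmatrix}{}_{P_+}\\ {}_{T}\end{bmatrix}$ of order $n=\dim M$, every parametrix $R$ of $P_T$ satisfies $\textnormal{Tr}_\omega(R)=\frac{1}{(2\pi)^n n}\int_M\int_{|\xi|=1}(\sigma^P_n(x,\xi))^{-1}\,d\xi\,dx$, independently of the parametrix, of the boundary condition and of $\omega$, and this equals $\textnormal{res}(R)$. Your argument instead reproves the paper's boundary theorem (Theorem \ref{teoremboundary} and Theorem \ref{Thm.Res-bdy}): you diagonalize $L$-Fourier multipliers, obtain the global-symbol limit $\lim_N\frac{1}{\log N}\sum_{l\le N}|(\sigma_{P,L}(\xi_l))^{-1}|$, and then you write that the equality with $\textnormal{res}(R)$ and all the independence assertions ``are precisely the content of Proposition \ref{Thm.Dix-res-NSCH}.'' That is circular with respect to the target: the local cosphere-integral formula, which is the whole content of the proposition, is never derived but simply cited back. (In the paper itself this proposition carries no proof --- it is imported from Nest--Schrohe --- and the paper's Theorem \ref{Thm.Res-bdy}, whose proof your sketch essentially mirrors, \emph{uses} this proposition; so your route cannot serve as a proof of it.)

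What is missing is the bridge between the spectral data of $P_T$ and the principal symbol of $P$. A non-circular proof must either follow Nest--Schrohe's own path --- show that a parametrix can be chosen of the form $R=Q_++G+G_0\in\mathcal{D}^{-n}$ with $Q$ of order $-n$ and homogeneous component $q_{-n}=(\sigma^P_n)^{-1}$ on $|\xi|=1$, note that parametrices differ by trace-class (regularizing singular Green) operators invisible to $\textnormal{Tr}_\omega$, and apply the Dixmier-trace formula \eqref{Eq.Dixmier-trace-Bdy} for $\mathcal{D}^{-n}$, in which the boundary term involves only $s_{-n+1}$ and hence is absent here --- or, alternatively, prove a Weyl law with identified leading coefficient for the realization $P_T$, namely $N_{P_T}(\lambda)=\frac{1}{(2\pi)^n n}\bigl(\int_M\int_{|\xi|=1}(\sigma^P_n(x,\xi))^{-1}d\xi\,dx\bigr)\lambda+o(\lambda)$, from which $\lambda_l^{-1}\sim C/l$ and the logarithmic average of the eigenvalues of $R$ converges to that constant (this is also where the independence of the boundary condition comes from, since $T$ does not affect the leading Weyl coefficient). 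Your own closing paragraph flags the identification of $R$, modulo trace class, with the multiplier of symbol $(\sigma_{P,L}(\xi_l))^{-1}$ as unresolved; that step can in fact be closed from Proposition \ref{Prop.parametrix} (finite-rank errors are trace class), but even with it closed you would only have the global-symbol formula \eqref{Eq:formula-bdy}, not the asserted local integral, whose proof remains entirely deferred to the very proposition under consideration.
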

By using non-harmonic analysis, M. Ruzhansky and N. Tokmagambetov in \cite{Ruz-Tok} give a different approach in terms of global symbols, which we summarize here. As above, $M$ denotes a compact manifold of dimension $n$ with boundary $\partial M$, and $L_M$ denotes the boundary value problem determined by a pseudo-differential operator $L$ of order $m$ on function in $L^2(M)$, on the interior of $M$, satisfying a suitable boundary conditions (BC) on $\partial M$. It is assumed the following conditions:
\begin{itemize}
\item The operator $L$ associated with the (BC) has a discrete spectrum $\{\lambda_\xi\in\mathbb{C}\,|\, \xi\in\mathcal{I}\}$, where  $\mathcal{I}$ is a countable subset of $\mathbb{Z}^k$ for some $k\geq 1$, and  the eigenvalues are ordered with the occurring multiplicities in the increasing order $|\lambda_j|\leq |\lambda_k|$ for $|j|\leq |k|$. The authors introduce the weight
$$\langle \xi \rangle:=(1+|\lambda_\xi|^2)^{1/2m},$$
and they assume that there exists a number $s_0\in\mathbb{R}$ such that 
\begin{equation}\label{s0}
\tau(s_{0},L):=\sum_{\xi\in\mathcal{I}} \langle \xi \rangle^{-s_0}<\infty.
\end{equation}
\item If $u_\xi$ denote the eigenfunction in $L^2(M)$ of $L$ associated with the (BC) corresponding to the eigenvalue $\lambda_\xi$ for $\xi\in\mathcal{I}$, so
$$Lu_\xi=\lambda_\xi u_\xi\,\,\text{ for }\,\, \xi\in\mathcal{I}.$$ The adjoint spectrum problem is
$$L^\ast  v_\xi=\overline{\lambda}_\xi v_\xi\,\,\text{ for }\,\, \xi\in\mathcal{I},$$
which is equipped with the conjugate boundary conditions (BC)$^\ast$. We can take biorthonormal system $\{ u_\xi\}$ and $\{v_\xi\}$ for $\xi\in\mathcal{I}$, i.e. $\|u_\xi\|=\|v_\xi\|=1$  for $\xi\in\mathcal{I}$ and  
\begin{equation*}
 \langle u_\xi, v_\eta\rangle=0\,\,\text{ for }\, \xi\neq \eta\,\,\text{ and } \langle u_\xi, v_\eta\rangle=1\,\,\text{ for }\, \xi=\eta,
\end{equation*}
where $\langle f,g \rangle:=\int_M fx)\overline{g(x)}\, dx$ is the usual product on the Hilbert space $L^2(M)$. In this context, we assume that the system $\{ u_\xi\,|\, \xi\in\mathcal{I}\}$ forms a basis of $L^2(M)$, i.e. for any $f\in L^2(M)$ there exists a unique series $\sum_{\xi\in \mathcal{I}} a_\xi u_\xi(x)$, so $\{ v_\xi\,|\, \xi\in\mathcal{I}\}$ is a basis of $L^2(M)$ too. 
\item Also we assume the functions $u_\xi$ and $v_\xi$ do not have zeros in $M$ for all $\xi\in\mathcal{I}$ and there exits  $C>0$ and $N\geq 0$ such that as $\langle  \xi \rangle\to \infty,$
\begin{equation*}
\inf_{x\in M} |u_\xi(x)|\geq C\langle \xi \rangle^{-N},\,\,\, \inf_{x\in M} |u_\xi(x)|\geq C\langle \xi \rangle^{-N}.
\end{equation*}
In this case the systems $\{ u_\xi\,|\, \xi\in\mathcal{I}\}$ is called a {\em WZ-system (without zeros system)}.
\end{itemize}
As it was mentioned in \cite{DRTk}, Remark 2.2., the condition given by WZ-system can be removed, however, under this situation the analysis underlying leads a matrix-valued version of the symbolic calculus, similar to Section \ref{Sec:PDOs-no-bdy}, which consists in vectors of eigenfunctions so that its elements do not all vanish at the same time- typical  examples of such situation is of operators on compact Lie groups, as it was described before.
\\
Now, we describe some elements involved in Ruzhansky-Tokmagambetov's calculus which will be needed in this paper.
The space 
\begin{equation}\label{Eq:R-T-dom}
C^\infty_L(M):=\bigcap_{k=1}^\infty \mathcal{D}(L^k)
\end{equation}
$\text{ where }\,\,\,\mathcal{D}(L^k):=\{ f\in L^2(M)\,|\, L^j f\in \mathcal{D}(L),\, j=0,1,\cdots ,k\},$
so that the boundary condition (BC) are satisfied by all the operators $L^j$. The Fr\'echet topology of $C^\infty_L(M)$ is given by the family of norms
$$\|f\|_{C^k_L}:=\max_{j\leq k} \| L^j f\|_{L^2(M)},\,\, k\in\mathbb{N}_0,\,\,f\in  C^\infty_L(M).$$
Similarly, it is defined $C^\infty_{L^\ast}(M)$ corresponding to the adjoint $L^\ast$ by 
$$C^\infty_{L^\ast}(M):=\bigcap_{k=1}^\infty \mathcal{D}((L^*)^k)$$ 
$\text{ where }\,\,\,\mathcal{D}((L^*)^k):=\{ f\in L^2(M)\,|\, (L^*)^j f\in \mathcal{D}(L),\, j=0,1,\cdots ,k\},$
which also has to satisfy the adjoint boundary conditions corresponding to the operator $L^\ast$. The Fr\'echet topology of $C^\infty_{L^\ast}(M)$ is given by the family of norms
$$\|f\|_{C^k_{L^\ast}}:=\max_{j\leq k} \| (L^\ast)^j f\|_{L^2(M)},\,\, k\in\mathbb{N}_0,\,\,f\in  C^\infty_L(M).$$
\\
Since $\{ u_\xi\}$ and $\{ v_\xi\}$ are dense in $L^2(M)$ that $C^\infty_{L}(M)$ and $C^\infty_{L^\ast}(M)$ are dense in $L^2(M)$.\\
\\
\textbf{$L$-Fourier transform:}
Let $\mathcal{S}(\mathcal{I})$ be the space of rapidly decreasing functions $\phi:\mathcal{I}\to \mathbb{C}$, i.e. for any $N<\infty,$ there exists a constant $C_{\phi,N}$ such that $|\phi(\xi)|\leq C_{\phi,N}\langle \xi\rangle^{-N}\,\,\text{for all } \xi\in\mathcal{I}.$ The space  $\mathcal{S}(\mathcal{I})$ forms a Fr\'echet space with the family of semi-norms $p_k(\phi):=\sup_{\xi\in\mathcal{I}} \langle\xi\rangle^k |\phi(\xi)|.$
The $L$-Fourier transform is a bijective homeomorphism $\mathcal{F}_L:C^\infty_L(M)\to  \mathcal{S}(\mathcal{I})$ defined by 
\begin{equation}\label{Lfourier}
(\mathcal{F}_L f)(\xi):=\hat{f}(\xi):=\int_M  f(x)\overline{v_\xi(x)}\, dx.
\end{equation}
The  inverse operator $\mathcal{F}^{-1}_L: \mathcal{S}(\mathcal{I})\to  C^\infty_L(M)$ is given by $(\mathcal{F}^{-1} h)(x):=\sum_{\xi\in \mathcal{I}} h(\xi) u_\xi(x)$, so that the Fourier inversion formula is given by
\begin{equation}
f(x)=\sum_{\xi\in \mathcal{I}} \hat{f}(\xi) u_\xi(x),\,\, f\in C^\infty_L(M).
\end{equation} 
Similarly, the $L^\ast$-Fourier transform is a bijective homeomorphism $\mathcal{F}_L:C^\infty_{L\ast}(M)\to  \mathcal{S}(\mathcal{I})$ defined by 
\begin{equation*}
(\mathcal{F}_{L^\ast} f)(\xi):=\hat{f}_\ast (\xi):=\int_M  f(x)\overline{u_\xi(x)}\, dx.
\end{equation*}
Its inverse $\mathcal{F}^{-1}_{L^\ast}: \mathcal{S}(\mathcal{I})\to  C^\infty_{L^\ast}(M)$ is given by $(\mathcal{F}^{-1}_{L^\ast}h)(x):=\sum_{\xi\in \mathcal{I}} h(\xi) v_\xi(x)$
so that the conjugate Fourier inversion formula is given by
\begin{equation}
f(x):=\sum_{\xi\in \mathcal{I}} \hat{f}_{\ast}(\xi) v_\xi(x),\,\, f\in C^\infty_{L^\ast}(M).
\end{equation}
The space $\mathcal{D}'_L(M):=\mathcal{L}(C^\infty_{L^*}(M,\mathbb{C}))$ of linear continuous functionals on $C^\infty_{L^*}(M)$ is called the space of {\em $L$-distribution}. By dualizing the inverse $L$-Fourier transform $\mathcal{F}^{-1}_L:\mathcal{S}(\mathcal{I})\to C^\infty_L(M),$ the $L$-Fourier transform extends uniquely to the mapping $$\mathcal{F}_L:\mathcal{D}'_L(M)\to\mathcal{S}'(\mathcal{I})$$
by the formula $\langle \mathcal{F}_L w,\phi\rangle:=\langle w, \overline{\mathcal{F}^{-1}_{L^*}\overline{\phi}}\rangle$ with $w\in \mathcal{D}'_L(M),$ $\phi\in\mathcal{S}(\mathcal{I})$.\\
\\
Moreover, the authors define Sobolev space adapted to $L_M$. The space $l^2_L:=\mathcal{F}_L\big( L^2(M)  \big)$ is defined as the image of $L^2(M)$ under the $L$-Fourier transform. Then the space of $l^2_L$ is a Hilbert space with the linear product
$$(a,b)_{l^2_L}:=\sum_{\xi\in\mathcal{I}} a(\xi)\overline{(\mathcal{F}_{L^\ast}\circ \mathcal{F}^{-1}b(\xi))}.$$
Then the space $l^2_L$ consists of the sequences of the Fourier coefficients of function in $L^2(M)$, in which Plancherel identity holds, for $a,b\in l^2_L$,
$$(a,b)_{l^2_L}=(\mathcal{F}^{-1}_L a,\mathcal{F}^{-1}_L b )_{L^2}.$$
For $f\in \mathcal{D}'_L(M)\cap \mathcal{D}'_{L^*}(M)$ and $s\in\mathbb{R}$, we say that 
$$f\in \mathcal{H}^s_L(M)\,\,\text{ if and only if }\,\, \langle \xi \rangle^s \hat{f}(\xi)\in l^2_L,$$
provided with the norm $$\|f \|_{\mathcal{H}^s_L}:=\big( \sum_{\xi\in\mathcal{I}}\langle \xi \rangle^{2s} \hat{f}(\xi)\overline{\hat{f}_*(\xi)} \big)^{1/2}.$$

\begin{proposition}[{\em \cite{Nest-Schrohe}, Proposition 6.3.}]\label{Prop:R-T-Sobolev}
For every $s\in\mathbb{R},$ the Sobolev space $\mathcal{H}^s_L(M)$ is a Hilbert space with the inner product 
$$(f,g)_{\mathcal{H}^s_L(M)}:=\sum_{\xi\in\mathcal{I}}\langle \xi \rangle^{2s}\hat{f}(\xi)\overline{\hat{g}_{*}(\xi)},$$
and the Sobolev space $\mathcal{H}^s_L(M)$ and $\mathcal{H}^s(M)$ are isometrically isomorphic.
\end{proposition}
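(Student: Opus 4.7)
The plan is to split the proof into three stages: first establish that the stated sesquilinear form makes $\mathcal{H}^s_L(M)$ into an inner-product space; second, prove completeness by comparison with a weighted sequence space; third, construct an explicit isometric isomorphism onto the classical Sobolev space $\mathcal{H}^s(M)$ using complex powers of an elliptic operator.

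For the first stage, I would observe that, by the definition of $\mathcal{H}^s_L(M)$, the sequences $\langle\xi\rangle^s\hat f(\xi)$ and $\langle\xi\rangle^s\hat g_\ast(\xi)$ both lie in $l^2_L$ (the former by hypothesis, the latter because $\hat g_\ast$ is related to $\hat g$ through the change-of-basis map $\mathcal F_{L^\ast}\circ\mathcal F^{-1}_L$, which is a bijection of $\mathcal S(\mathcal I)$ extending to $l^2_L$ in view of the Plancherel identity recalled just before the statement). Absolute convergence of the defining series and Cauchy--Schwarz then give boundedness of the form, while Hermitian symmetry and linearity are immediate from the definition of $\hat f$ and $\hat g_\ast$. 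Positive-definiteness follows because $(f,f)_{\mathcal H^s_L}=\sum_\xi \langle\xi\rangle^{2s}\hat f(\xi)\overline{\hat f_\ast(\xi)}$ reduces, through the same Plancherel identity, to $\|T_s f\|_{L^2(M)}^2$ where $T_s f:=\mathcal F^{-1}_L(\langle\xi\rangle^s\hat f(\xi))$; hence $(f,f)_{\mathcal H^s_L}=0$ forces $T_s f=0$ and then $f=0$ by the bijectivity of $\mathcal F_L$.

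For completeness, I would use precisely the map $T_s:\mathcal H^s_L(M)\to L^2(M)$ introduced above. By the computation of the previous paragraph, $T_s$ is an isometry for the $\mathcal H^s_L$-norm. It is also surjective: given $g\in L^2(M)$, set $\hat f(\xi):=\langle\xi\rangle^{-s}\hat g(\xi)$, so that $f:=\mathcal F^{-1}_L(\hat f)\in\mathcal D'_L(M)\cap\mathcal D'_{L^\ast}(M)$ (using the growth/decay hypotheses on $u_\xi,v_\xi$), and $f\in\mathcal H^s_L(M)$ with $T_s f=g$. Thus $T_s$ is an isometric isomorphism onto the Hilbert space $L^2(M)$, and $\mathcal H^s_L(M)$ is itself a Hilbert space.

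For the final stage, the strategy is to compose $T_s$ with an isometry $L^2(M)\to\mathcal H^s(M)$. The natural candidate is $\Lambda^{-s}$ where $\Lambda:=(1+L^\ast L)^{1/(2m)}$: since $L$ is elliptic of order $m$, Seeley's construction of complex powers shows that $\Lambda$ is a classical elliptic pseudo-differential operator of order one, $\Lambda^s$ is of order $s$, and $\Lambda^s:\mathcal H^s(M)\to L^2(M)$ is a topological isomorphism. Equipping $\mathcal H^s(M)$ with the Hilbert norm $\|u\|_{\mathcal H^s(M)}:=\|\Lambda^s u\|_{L^2(M)}$ (equivalent to the classical one by Proposition \ref{Prop:R-T-Sobolev}-type arguments based on ellipticity), the map $\Lambda^{-s}\circ T_s:\mathcal H^s_L(M)\to \mathcal H^s(M)$ is the desired isometric isomorphism.

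The main obstacle I expect is the last step, because $L$ need not be self-adjoint, so one cannot apply the spectral theorem directly to write $\Lambda^s u_\xi=\langle\xi\rangle^s u_\xi$; instead one must work with $L^\ast L$ (which is self-adjoint and positive) and relate its spectral decomposition to the biorthogonal system $\{u_\xi\},\{v_\xi\}$. The book-keeping between $\hat f$ and $\hat f_\ast$ then has to be done carefully so that the weighted Plancherel identity giving $(f,f)_{\mathcal H^s_L}=\|T_sf\|_{L^2}^2$ really matches $\|\Lambda^s u\|_{L^2}^2$ after transport, and one must invoke elliptic regularity to identify the abstract space $T_s^{-1}(L^2(M))$ with the classical $\mathcal H^s(M)$ rather than a merely equivalent variant.
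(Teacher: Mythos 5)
Your first two stages hinge on the identity $(f,f)_{\mathcal H^s_L}=\|T_sf\|_{L^2}^2$ with $T_sf:=\mathcal F_L^{-1}\bigl(\langle\xi\rangle^s\hat f(\xi)\bigr)$, which you claim follows ``through the same Plancherel identity''. It does not. Plancherel (as recalled in the paper) gives $\|T_sf\|_{L^2}^2=\sum_\xi\langle\xi\rangle^{s}\hat f(\xi)\,\overline{\mathcal F_{L^*}(T_sf)(\xi)}$, so you would need $\mathcal F_{L^*}(T_sf)(\xi)=\langle\xi\rangle^{s}\hat f_{*}(\xi)$, i.e.\ that multiplying the $L$-Fourier coefficients by $\langle\xi\rangle^{s}$ multiplies the $L^{*}$-Fourier coefficients by the same weight. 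Since $\{u_\xi\}$ is only biorthogonal to $\{v_\xi\}$ and not orthonormal, one has $\mathcal F_{L^*}(T_sf)(\xi)=\sum_\eta\langle\eta\rangle^{s}\hat f(\eta)\langle u_\eta,u_\xi\rangle_{L^2}$, whereas $\langle\xi\rangle^{s}\hat f_*(\xi)=\sum_\eta\langle\xi\rangle^{s}\hat f(\eta)\langle u_\eta,u_\xi\rangle_{L^2}$; these agree only when the Gram matrix of $\{u_\xi\}$ commutes with the diagonal weight (e.g.\ $s=0$, or orthogonal eigenfunctions as in the self-adjoint case). Equivalently, the form in the statement is $(f,f)_{\mathcal H^s_L}=\langle T_{2s}f,f\rangle_{L^2}$ with $T_{2s}$ non-self-adjoint, and for a general biorthogonal family such a form need not even be positive semidefinite (a $2\times2$ model with $u_1=(1,0)$, $u_2=(\epsilon,1)$ and weights $1,t$ is indefinite for $t$ large). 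So positive-definiteness, the isometry of $T_s$, and hence your completeness argument are exactly the nontrivial content of the proposition and are assumed rather than proved; some structural input on $L$, or the actual argument of \cite{Ruz-Tok} (from which the statement is taken — the tag ``\cite{Nest-Schrohe}, Proposition 6.3'' is a mis-citation, and the present paper quotes the result without proof, so there is no in-paper argument to compare with), is indispensable at this point.

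The third stage has independent problems. Here $L$ is a boundary value problem (the realization with the boundary conditions (BC)), not a classical pseudo-differential operator on a closed manifold, so Seeley's complex powers of $(1+L^{*}L)^{1/(2m)}$ and the asserted isomorphism $\Lambda^{s}:\mathcal H^{s}(M)\to L^{2}(M)$ cannot be invoked off the shelf: one must specify which Sobolev scale on a manifold with boundary is meant and verify that the domain of $\Lambda^{s}$ matches it, which is essentially the statement at stake. Even granting this, $\Lambda^{-s}\circ T_{s}$ is an isometry only after renorming $\mathcal H^{s}(M)$ by $\|\Lambda^{s}\cdot\|_{L^{2}}$, and you justify that renorming by ``Proposition \ref{Prop:R-T-Sobolev}-type arguments'', i.e.\ by the proposition being proved, which is circular. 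Your closing caveat is the right instinct: since $L$ need not be self-adjoint, the spectral decomposition of $L^{*}L$ bears no simple relation to the biorthogonal system $\{u_\xi\},\{v_\xi\}$, and without that link the transported norm does not match the one in the statement. Note finally that in \cite{Ruz-Tok} the isometric assertion is between the $L$- and $L^{*}$-Sobolev scales, while the identification with the classical $\mathcal H^{s}(M)$ is an isomorphism with equivalent norms; any complete proof should be aligned with what can actually be established in that generality.
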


\textbf{$L$-Quantization and $L$-symbol:} 
It can be shown that, see Theorem 9.2. in \cite{Ruz-Tok}, any continuous linear operator $A:C^\infty_L(M)\to C^\infty_L(M)$ can be expressed in terms the $L$-Fourier and its inverse transform as 
$$Af(x)=\sum_{\xi\in\mathcal{I}} u_\xi(x)\sigma_{A,L}(x,\xi) \hat{f}(\xi)$$
for every $f\in C^\infty_L(M)$ and $x\in M$, where  $\sigma_{A,L}(x,\xi)$ is called {\em the $L$-symbol} of $A$ and can be computed as (see \cite{Ruz-Tok}, Theorem 9.2.)
\begin{equation}\label{Eq:glob.sym.bdy}
\sigma_{A,L}(x,\xi)=u_\xi(x)^{-1}(A u_\xi)(x)\,\,\text{ for all }\, x\in  M\,\,\text{ and }\, \xi\in \mathcal{I}.
\end{equation}
In particular, if $A:C^\infty_{L}(M)\to C^\infty_{L}(M)$ be a continuous linear operator such that 
$$\mathcal{F}_{L}(Af)(\xi)=\sigma(\xi)\mathcal{F}_{L}(f)(\xi),\,\,\text{ for }\, f\in C^\infty_{L}(M)\,\,\text{ and }\, \xi\in\mathcal{I},$$
for some $\sigma:\mathcal{I}\to \mathbb{C}$; then $A$ is called a {\em $L$-Fourier multiplier}.\\
\\
The {\em adjoint operator} $A^*$ is defined by the equation
$$\langle A u_\xi, v_\xi\rangle_{L^2}=\langle u_\xi,A^* v_\xi\rangle_{L^2}.$$
It follows from the Parceval identity, see Proposition 6.1. in \cite{Ruz-Tok} that $A$ is an $L$-Fourier multiplier by $\sigma(\xi)$ if and only if $A^*$ is an $L^*$-Fourier multiplier by $\overline{\sigma(\xi)}.$\\
\\
Moreover, in \cite{Ruz-Tok}, the authors provided several results in the same asymptotic spirit of H\"ormander calculus for global symbols, e.g.  asymptotic formulas for the symbol of the adjoint operator $A^*$ and for the symbol of any parametrix $R$ of $A$,  in terms of the symbol of the operator $A$; for the symbol of the product $AB$ in terms of the symbol of $A$ and $B$.

\section{The Dixmier trace and the non commutative residue of Fourier multipliers on manifolds without boundary}\label{proof}
In this section we proof our main results concerning to the Dixmier trace and the noncommutative residue for  invariant operators on a closed manifold $M$ (a compact manifold without boundary). 
\begin{remark}
We are interested in bounded $E$-multipliers on $L^2(M).$ We recall that the following inequality
\begin{equation}
\sup_{l\in\mathbb{N}_0}\Vert \sigma_{A,E}(l)\Vert_{op}<\infty
\end{equation}
is a necessary and sufficient condition for the boundedness of $A$ on $L^2(M).$ This is an immediate consequence of the Plancherel Formula.  
\end{remark}
Now, we present our main theorem of this section.
\begin{theorem}\label{generalmanifold}
Let $M$ be a  compact manifold without boundary and let $E$ in the set $\Psi^{\nu}_{+e}(M)$ of positive elliptic pseudo-differential operators on $M.$ If  $A:L^2(M)\rightarrow L^2(M)$  is a bounded $E$-  invariant operator  with matrix-valued symbol $\sigma_{A,E}(l),$  then $A$ is \textrm{Dixmier traceable} if and only if 
\begin{equation}\label{eq1}
\tau(A):=\frac{1}{\dim(M)}\lim_{N\rightarrow\infty}\frac{ 1  }{\log N}\sum_{ l:(1+\lambda_{l})^{\frac{1}{\nu}}\leq N}\textnormal{Tr}(|\sigma_{A,E}(l)|)<\infty.
\end{equation}
Moreover,  if $A$ is positive, $\tau(A)=\textnormal{Tr}_{w}(A).$
\end{theorem}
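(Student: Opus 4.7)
The plan is to use the $E$-invariance to block-diagonalise $A$ on the eigenspaces of $E$, and then reconcile the enumeration of the singular values of $A$ in decreasing order with the enumeration by $E$-eigenvalue via Weyl's law for $E$.

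By the polar decomposition $A = U|A|$ and the linearity of the Dixmier trace, I first reduce to the case $A \geq 0$; the operator $|A|$ still commutes with $E$, with $\sigma_{|A|,E}(l) = |\sigma_{A,E}(l)|$, so nothing is lost. The $E$-invariance and the orthogonal decomposition $L^{2}(M) = \bigoplus_{l} H_{l}$ give $A = \bigoplus_{l} A_{l}$, where $A_{l}$ is represented on the $d_{l}$-dimensional eigenspace $H_{l}$ by the matrix $\sigma_{A,E}(l)$. Hence the multiset of singular values of $A$ equals the disjoint union, over $l$, of the singular values of the matrices $\sigma_{A,E}(l)$, and $\textnormal{Tr}(|\sigma_{A,E}(l)|)$ is precisely the sum of those $d_{l}$ singular values of $A$ that lie in $H_{l}$.

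Next, I invoke Weyl's law for the positive elliptic pseudo-differential operator $E$ of order $\nu$ on the $\varkappa$-dimensional manifold $M$:
\begin{equation*}
N_{E}(N) := \sum_{l:(1+\lambda_{l})^{1/\nu}\leq N} d_{l} = C_{0}N^{\varkappa} + O(N^{\varkappa-1}), \quad N\to\infty,
\end{equation*}
so $\log N_{E}(N) = \varkappa \log N + O(1)$. Let $Q_{N}$ denote the spectral projection of $E$ onto $\bigoplus_{l:(1+\lambda_{l})^{1/\nu}\leq N} H_{l}$, which has rank $N_{E}(N)$. Then
\begin{equation*}
P_{N} := \sum_{l:(1+\lambda_{l})^{1/\nu}\leq N} \textnormal{Tr}(|\sigma_{A,E}(l)|) = \textnormal{Tr}(Q_{N}A),
\end{equation*}
and the Ky Fan variational formula $\sum_{n=1}^{N_{E}(N)} s_{n}(A) = \sup\{\textnormal{Tr}(PA) : \textnormal{rank}\,P = N_{E}(N)\}$ yields $P_{N} \leq \sum_{n=1}^{N_{E}(N)} s_{n}(A)$. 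Consequently, $A \in \mathcal{L}^{(1,\infty)}$ immediately forces $P_{N} = O(\log N_{E}(N)) = O(\log N)$, which is one direction of the iff.

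For the converse and for the equality of traces, I need the reverse comparison
\begin{equation*}
\sum_{n=1}^{N_{E}(N)} s_{n}(A) \leq P_{N} + o(\log N).
\end{equation*}
This amounts to controlling the ``misplaced mass'': singular values of $A$ that would enter the top $N_{E}(N)$ but sit in blocks with $(1+\lambda_{l})^{1/\nu} > N$, weighed against singular values in the natural projection $Q_{N}$ that are smaller than $s_{N_{E}(N)}(A)$. Using the pointwise bound $\|\sigma_{A,E}(l)\|_{op} \leq \|A\|_{op}$, the Weyl remainder $O(N^{\varkappa-1})$, and a pairing of the two displaced families of singular values, one shows that this residual contribution is $o(\log N)$. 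Granting this estimate,
\begin{equation*}
\frac{P_{N}}{\log N} = \varkappa \cdot \frac{\sum_{n=1}^{N_{E}(N)}s_{n}(A)}{\log N_{E}(N)} + o(1),
\end{equation*}
so the two quantities are simultaneously bounded and, applying $\lim_{\omega}$ together with property (ii) of the Dixmier functional (which returns the ordinary limit when it exists), we obtain, for positive $A$,
\begin{equation*}
\textnormal{Tr}_{\omega}(A) = \frac{1}{\varkappa}\lim_{N\to\infty}\frac{P_{N}}{\log N} = \tau(A),
\end{equation*}
whenever the limit defining $\tau(A)$ exists; the $1/\dim(M) = 1/\varkappa$ factor is exactly the geometric contribution from Weyl's law. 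The main obstacle is the reverse inequality in the third paragraph: comparing a decreasing-rearrangement partial sum with a block-grouped partial sum requires a delicate balance of the Weyl remainder against the operator-norm bound on the symbol, and it is here that the elliptic structure of $E$ enters in an essential way.
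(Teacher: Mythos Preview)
Your approach is more rigorous than the paper's: the paper simply asserts ``by definition'' that Dixmier traceability is equivalent to the block-sum condition, without ever addressing the passage from the block enumeration $\{l=0,1,2,\dots\}$ to the decreasing rearrangement of singular values that actually defines $\mathcal{L}^{(1,\infty)}$. Your use of the Ky Fan maximum principle for the inequality $P_N \leq \sum_{n=1}^{N_E(N)} s_n(A)$ is correct and gives the ``only if'' direction cleanly, which the paper does not isolate.

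The gap is exactly where you flag it: the reverse comparison $\sum_{n=1}^{N_E(N)} s_n(A) \leq P_N + o(\log N)$ is not proved, and the tools you list (the uniform bound $\|\sigma_{A,E}(l)\|_{op}\leq \|A\|$, the Weyl remainder, and an unspecified pairing) are insufficient. In fact the ``if'' direction of the biconditional is false without an extra hypothesis. On $M=\mathbb{T}$ with $E=I-\Delta$, take the Fourier multiplier $A$ with $a_n=1$ when $|n|$ is a power of $2$ and $a_n=0$ otherwise. Then
\[
\sum_{l:(1+\lambda_l)^{1/2}\leq N}\textnormal{Tr}(|\sigma_{A,E}(l)|)\asymp \log_2 N,
\]
so $\tau(A)=1/\log 2<\infty$; yet $A$ has infinitely many singular values equal to $1$, is not compact, and hence lies outside $\mathcal{L}^{(1,\infty)}$. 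No ``misplaced mass'' bookkeeping can repair this, since the unit eigenvalues in the sparse blocks $l=2^k$ never decay and the Weyl remainder is irrelevant to them. The paper's proof has the same gap, merely unacknowledged; to make the statement correct one needs an additional decay hypothesis such as $\|\sigma_{A,E}(l)\|_{op}\to 0$ (compactness), or better $\|\sigma_{A,E}(l)\|_{op}=O((1+\lambda_l)^{-\varkappa/\nu})$, under which your rearrangement argument can be completed.
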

\begin{proof}
We want to show that  \eqref{eq1} is a necessary and sufficient condition for the Dixmier traceability of a $E$-invariant operator $A$ with full symbol $\sigma_{A,E}(l).$ As a consequence of the relation $$\mathcal{F}_{E}(Af)(l)=\sigma_{A,E}(l)(\mathcal{F}_{E}f)(l),\,\,l\in \mathbb{N}_0,$$
we obtain
\begin{equation}\label{cup1}
\textnormal{Spec}_p(A)=\bigcup_{l\in\mathbb{N}_0}\textnormal{Spec}_p(\sigma_{A,E}(l)).
\end{equation}
In fact, if $\lambda\in \textnormal{Spec}_p(A)=\ker(A-\lambda I),$ there exists $u\neq 0,$ such that $Au=\lambda u.$ If we take the $E$-Fourier transform to both sides, we have $\sigma_{A,E}(l)\widehat{u}(l)=\lambda\widehat{u}(l).$ There exists $l_{0}\neq 0,$ such that $\widehat{u}(l_{0})\neq 0.$  Hence we obtain that $\lambda$ is a eigenvector of the matrix $\sigma(l_0).$ For the proof of the converse, let us assume that  for some $l_{0}\in \mathbb{N}_0,$ the complex number $\lambda\in \textnormal{Spec}_p(\sigma(l_0))$ is an eigenvalue with eigenvector $u(l_0).$ By contradiction let us assume that $\lambda$ is not a element of $\Sigma_p(A).$ It follows that $A$ is a Fourier multiplier it have not residual or continuous spectrum, therefore $\lambda$ is in the resolvent set of $A,$  in particular $A-\lambda I$ is  an injective operator. So we have that $(A-\lambda I)v=0$ implies that $v=0.$ Let us consider the sequence of matrices $(v_{l})_{l\in\mathbb{N}_0},$ defined by $v_{l}=0$ (zero matrix of dimension $d_{l}$) if $l\neq l_0,$ and $v_{l_0}=u(l_0).$ If $u$ is the inverse $E$-Fourier transform of this sequence, we have $\widehat{u}(l)=v_l.$ It is clear that for every $l,$ $\sigma_{A,E}(l)\widehat{u}(l)=\lambda\widehat{u}(l).$ By taking the inverse $E$-Fourier transform, we have $Au=\lambda u$ with $u\neq 0$ which is a contradiction.\\

Applying \eqref{cup1} to the operator $\sqrt[2]{A^* A}$ we obtain the following relation for the singular values of $A:$
\begin{equation}
s(A)=\bigcup_{l\in\mathbb{N}_0}s(\sigma_{A,E}(l)).
\end{equation}
Moreover, if $\lambda_{1,d_{l}},\lambda_{2,d_{l}},\cdots \lambda_{k_l,d_{l}}$ are the singular values of $\sigma_{A,E}(l)$ with corresponding multiplicities $m_{1,d_{l}},m_{2,d_{l}},\cdots m_{k_l,d_{l}},$
then 
\begin{equation}
m_{1,d_{l}}+m_{2,d_{l}}+\cdots m_{k_l,d_{l}}=d_{l}
\end{equation}
and 
\begin{equation}
m_{1,d_{l}}\lambda_{1,d_{l}}+m_{2,d_{l}}\lambda_{2,d_{l}}+\cdots m_{k_l,d_{l}}\lambda_{k_l,d_{l}}=\textnormal{Tr}[|\sigma_{A,E}(l)|],
\end{equation}
where $|\sigma_{A,E}(l)|=\sqrt[2]{\sigma_{A,E}(l)^* \sigma_{A,E}(l)}.$ With notations above, $A$ is Dixmier traceable (by definition) if and only if
\begin{equation}
\tau(A):=\lim_{N\rightarrow\infty}\frac{ \sum_{l\leq N}\textnormal{Tr}(|\sigma_{A,E}(l)|)  }{\log (\sum_{l\leq N}d_{l})}=\lim_{N\rightarrow\infty}\frac{ \sum_{l\leq N}\sum_{j=1}^{k_{l}}m_{j,d_{l}}\lambda_{j,d_{l}}  }{\log(\sum_{l\leq N}\sum_{j=1}^{k_{l}}m_{j,d_{l}})}<\infty.
\end{equation}
In this  case $\tau(A)=\textnormal{Tr}_w(A).$
Since
$$ \lim_{N\rightarrow\infty}\frac{ \sum_{l\leq N}\textnormal{Tr}(|\sigma_{A,E}(l)|)  }{\log(\sum_{l\leq N}d_{l})}=\lim_{N\rightarrow\infty}\frac{ \sum_{(1+\lambda_l)^{\frac{1}{\nu}}\leq N}\textnormal{Tr}(|\sigma_{A,E}(l)|)  }{\log(\sum_{(1+\lambda_l)^{\frac{1}{\nu}}\leq N}d_{l})},$$
by using the Weyl Eigenvalue Counting  formula for the operator $E$ we have
\begin{equation}
\sum_{(1+\lambda_l)^{\frac{1}{\nu}}\leq L}d_{l}=C_{0}L^{\varkappa}+O(L^{\varkappa-1}), 
\end{equation}
where $\varkappa=\dim(M).$ So, we obtain
\begin{equation}
\textnormal{Tr}_{w}(A)=\frac{1}{\dim(M)}\lim_{N\rightarrow\infty}\frac{ 1  }{\log N}\sum_{ (1+\lambda_{l})^{\frac{1}{\nu}}\leq N}\textnormal{Tr}(|\sigma_{A,E}(l)|)
\end{equation}
which is the desired result.
\end{proof}

An important fact in the formulation of our analysis is the following result proved by A. Connes in the 80's, (see \cite[pag. 307]{Connes}).

\begin{theorem}[A. Connes \cite{Connes}]\label{connestheorem}
Let $M$ be a closed manifold of dimension $\varkappa.$ Then every classical pseudo-differential operator $A$ of order $-\varkappa$ lies in $\mathcal{L}^{(1,\infty)}{(L^2(M))}$ and 
\begin{equation}
\textnormal{Tr}_{w}(A)=\textnormal{res}(A).
\end{equation}
\end{theorem}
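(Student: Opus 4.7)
I would split the proof into two independent parts: (i) showing that every $A\in \Psi^{-\varkappa}_{cl}(M)$ lies in $\mathcal{L}^{(1,\infty)}(L^2(M))$, and (ii) establishing the equality $\textnormal{Tr}_\omega(A)=\textnormal{res}(A)$. Both statements being linear, I would first reduce to positive $A$ via polar decomposition, which matches the primary definition of $\textnormal{Tr}_\omega$ recalled in the introduction.

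For step (i), the plan is to exploit the ideal structure of $\mathcal{L}^{(1,\infty)}(L^2(M))$ to reduce the whole family $\Psi^{-\varkappa}_{cl}(M)$ to a single test operator. Fix a Riemannian metric $g$ on $M$ and set $E:=(1+\Delta_g)^{\varkappa/2}\in \Psi^{\varkappa}_{cl}(M)$, which is positive and elliptic. The classical Weyl law for $E$ yields $N_E(\lambda)\sim c_E\,\lambda$ with $c_E=(2\pi)^{-\varkappa}\int_M\int_{|\xi|_g\leq 1}d\xi\,dx$, so $\lambda_n(E)\sim n/c_E$ and $s_n(E^{-1})\sim c_E/n$. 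Summing gives $\sum_{n\leq N}s_n(E^{-1})=c_E\log N+o(\log N)$, hence $E^{-1}\in \mathcal{L}^{(1,\infty)}(L^2(M))$. For an arbitrary $A\in\Psi^{-\varkappa}_{cl}(M)$, I would factor $A=(AE)\circ E^{-1}$; since $AE\in\Psi^{0}_{cl}(M)$ is bounded on $L^2(M)$, the ideal property delivers $A\in \mathcal{L}^{(1,\infty)}(L^2(M))$.

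For step (ii), my approach is to invoke Wodzicki's uniqueness theorem alluded to in the introduction: up to a scalar, the noncommutative residue is the only trace on $\Psi^{\mathbb{Z}}_{cl}(M)/\Psi^{-\infty}(M)$. I would verify that the functional $A\mapsto \textnormal{Tr}_\omega(A)$ extends to a trace on this quotient. Vanishing on $\Psi^{-\infty}(M)$ is immediate since smoothing operators are trace class and $\textnormal{Tr}_\omega|_{\mathcal{L}^1(L^2(M))}\equiv 0$. The tracial identity $\textnormal{Tr}_\omega(BC)=\textnormal{Tr}_\omega(CB)$ on pairs whose product lies in $\mathcal{L}^{(1,\infty)}(L^2(M))$ is inherited from general properties of Dixmier traces. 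By uniqueness, $\textnormal{Tr}_\omega=c\cdot \textnormal{res}$ on $\Psi^{-\varkappa}_{cl}(M)$ for some universal constant $c$. To pin down $c$, I would plug in $A=E^{-1}$: its principal symbol is $|\xi|_g^{-\varkappa}$, giving $\textnormal{res}(E^{-1})=\frac{1}{\varkappa(2\pi)^\varkappa}\int_M\int_{|\xi|_g=1}d\xi\,dx$, and a polar change of variables shows this is exactly $c_E=\textnormal{Tr}_\omega(E^{-1})$ from step (i), so $c=1$.

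The main obstacle I anticipate is justifying that $\textnormal{Tr}_\omega$ actually descends to a trace on the full quotient algebra $\Psi^{\mathbb{Z}}_{cl}(M)/\Psi^{-\infty}(M)$ rather than only on its ``critical'' layer $\Psi^{-\varkappa}_{cl}(M)$ where it is manifestly finite; operators of lower order are trace class and cause no issues, but orders above $-\varkappa$ require either a zeta-function or heat-kernel regularisation to be incorporated into the uniqueness framework, as carried out in \cite{Wodzicki}. Coupled with the precise Weyl-law constant matching that fixes $c=1$, this regularisation step is the technical heart of the argument.
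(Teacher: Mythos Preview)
The paper does not prove this theorem. It is quoted verbatim as a result of Connes, with a pointer to \cite[p.~307]{Connes}, and then used as a black box in the proofs of Theorem~\ref{compactliegroup} and Proposition~\ref{noninvarianttrace}. There is therefore no ``paper's own proof'' to compare your proposal against.

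For what it is worth, your outline is the standard modern route to Connes' theorem and is essentially correct. The obstacle you flag at the end is genuine but slightly overstated: one does not need to extend $\textnormal{Tr}_\omega$ to all of $\Psi^{\mathbb{Z}}_{cl}(M)/\Psi^{-\infty}(M)$. What Wodzicki's argument actually gives is that any linear functional on $\Psi^{-\varkappa}_{cl}(M)$ which kills $\Psi^{-\varkappa-1}_{cl}(M)$ and all commutators $[B,C]$ with $B\in\Psi^{b}_{cl}$, $C\in\Psi^{c}_{cl}$, $b+c=-\varkappa$, is a scalar multiple of $\textnormal{res}$. Both hypotheses are easy for $\textnormal{Tr}_\omega$: operators of order $\leq -\varkappa-1$ are trace class, hence killed; and for the commutator one writes $BC=(BE^{b/\varkappa})(E^{-b/\varkappa}C)$ with both factors bounded, so the trace property of $\textnormal{Tr}_\omega$ on $\mathcal{L}(L^2)\cdot\mathcal{L}^{(1,\infty)}$ applies. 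Your normalisation computation via $E^{-1}$ and the Weyl constant is correct.
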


Now, we apply the preceding theorem and some tools of representation theory in order to study the Dixmier trace and the noncommutative residue for operator on compact Lie groups.
 
 \begin{theorem}\label{compactliegroup}Let $M=G$ be a compact Lie group of dimension $\varkappa$ and $\widehat{G}$ be the unitary dual of $G.$ If we denote by $\sigma_{A}(x,\xi)\equiv \sigma_{A,-\mathcal{L}_G}(x,\xi)$ the matrix valued symbol associated to $A,$  then under the condition
\begin{equation}
\Vert \Delta_{\xi}^{\alpha}\partial_{x}^{\beta}\sigma_{A}(x,\xi) \Vert_{op}\leq C\langle \xi\rangle^{-\varkappa-|\alpha|},\,\,\,x\in G, [\xi]\in\widehat{G},
\end{equation}
the operator $A$ is Dixmier measurable. Moreover, if $A$ is left-invariant and positive, its Dixmier trace is given by
\begin{equation}\label{eq2}
\textnormal{Tr}_{w}(A)=\frac{1}{\dim(G)}\lim_{N\rightarrow\infty}\frac{ 1  }{\log N}\sum_{\xi: \langle \xi\rangle\leq N}d_{\xi}\textnormal{Tr}(|\sigma_{A}(\xi)|).
\end{equation} 
 \end{theorem}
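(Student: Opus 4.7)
The plan is to reduce the two assertions to previously established results in the paper: Connes' trace theorem (Theorem \ref{connestheorem}) for the Dixmier measurability, and Theorem \ref{generalmanifold} combined with Remark \ref{mainremarkglobal} for the explicit formula.

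First I would observe that the hypothesis $\|\Delta_\xi^\alpha \partial_x^\beta \sigma_A(x,\xi)\|_{op}\leq C\langle\xi\rangle^{-\varkappa-|\alpha|}$ is exactly the Ruzhansky--Turunen characterization \eqref{equivalence} of the H\"ormander class $\Psi^{-\varkappa}(G)=\Psi^{-\varkappa}_{1,0}(G)$ on the closed $\varkappa$-dimensional manifold $G$, so that $A$ is a pseudo-differential operator of order $-\dim(G)$. Connes' Theorem \ref{connestheorem} then places $A$ in the Dixmier ideal $\mathcal{L}^{(1,\infty)}(L^2(G))$ and identifies $\textnormal{Tr}_\omega(A)=\textnormal{res}(A)$; since the noncommutative residue depends only on $A$ and not on the averaging procedure $\omega$, this proves $A$ is Dixmier measurable.

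For the explicit formula \eqref{eq2}, I would specialize Theorem \ref{generalmanifold} to the choice $E=-\mathcal{L}_G$, the positive Laplace--Beltrami operator, which is a positive elliptic classical pseudo-differential operator of order $\nu=2$. A left-invariant operator commutes with $-\mathcal{L}_G$, so it is $E$-invariant in the sense of Delgado--Ruzhansky; as $A$ is positive, Theorem \ref{generalmanifold} yields
\begin{equation*}
\textnormal{Tr}_\omega(A)=\frac{1}{\dim G}\lim_{N\to\infty}\frac{1}{\log N}\sum_{l:(1+\lambda_l)^{1/2}\leq N}\textnormal{Tr}(|\sigma_{A,-\mathcal{L}_G}(l)|).
\end{equation*}
Enumerating the eigenspaces of $-\mathcal{L}_G$ by $[\xi_l]\in\widehat{G}$ with $\lambda_l=\langle\xi_l\rangle^2-1$ as in Remark \ref{mainremarkglobal}, left-invariance forces $\sigma_{A,-\mathcal{L}_G}(l)$ to be the block-diagonal matrix consisting of $d_{\xi_l}$ identical copies of $\sigma_A(\xi_l)$. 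The identity \eqref{equivalenceofmatrices'} gives $\textnormal{Tr}(|\sigma_{A,-\mathcal{L}_G}(l)|)=d_{\xi_l}\textnormal{Tr}(|\sigma_A(\xi_l)|)$, and reindexing the sum by $[\xi]\in\widehat{G}$ with $\langle\xi\rangle\leq N$ yields \eqref{eq2}.

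The main obstacle is the bookkeeping involved in passing from the Delgado--Ruzhansky enumeration $l\in\mathbb{N}_0$ to the unitary dual $\widehat{G}$: the factor $d_\xi$ in \eqref{eq2} is \emph{not} a spectral multiplicity coming from the Weyl counting function (which contributes the overall $1/\dim(G)$ factor), but rather emerges from the block structure of $\sigma_{A,-\mathcal{L}_G}(l)$ dictated by left-invariance, and these two sources of dimensional factors must be kept separate. A secondary subtlety is that Theorem \ref{generalmanifold} produces the existence of the Ces\`aro-type limit only inside the Dixmier functional; to assert the limit in \eqref{eq2} exists as a genuine numerical limit, one must invoke the Dixmier measurability established in the first step via Connes' theorem.
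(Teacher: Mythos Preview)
Your proposal is correct and follows essentially the same route as the paper: identify the symbol hypothesis with membership in $\Psi^{-\varkappa}(G)$ via \eqref{equivalence}, invoke Connes' Theorem \ref{connestheorem}, and then feed the left-invariant case into Theorem \ref{generalmanifold} using the block-diagonal relation \eqref{equivalenceofmatrices'} from Remark \ref{mainremarkglobal} together with the Weyl counting formula. Your explicit remark that Dixmier \emph{measurability} (not merely traceability) follows because $\textnormal{res}(A)$ is $\omega$-independent is in fact slightly more careful than the paper's own argument, which passes over this point.
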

\begin{proof}
We observe that by condition \eqref{equivalence}, the operator $A$ is a pseudo-differential operator of order $-\varkappa.$ It follows from  Theorem 5.2 in \cite{Ruz3} that $A$ is bounded on $L^{2}(G)$. Then, by   Connes' Theorem in the form of Theorem \ref{connestheorem}, $A$ is Dixmier traceable. Now, if $A$ is a Fourier multiplier on $G,$ from Remark \ref{mainremarkglobal} and the equation \eqref{equivalenceofmatrices'}, we deduce 
\begin{equation}\label{equivalenceofmatrices}\textnormal{Tr}(|\sigma_{A}(l)|)=d_{\xi_l}\textnormal{Tr}(|\sigma_{A}(\xi_l)|).
\end{equation}
Hence \eqref{eq1} becomes
\begin{align*}
\textnormal{Tr}_{w}(A) &=\lim_{N\rightarrow\infty}\frac{ \sum_{l\leq N}\textnormal{Tr}(|\sigma_{A,E}(l)|)  }{\log[\sum_{l\leq N}d_{l}]}\\
&=\lim_{N\rightarrow\infty}\frac{ \sum_{l\leq N}d_{\xi_l}\textnormal{Tr}(|\sigma_{A}(\xi_l)|)  }{\log[\sum_{l\leq N}d^2_{\xi_l}]}=\lim_{N\rightarrow\infty}\frac{ \sum_{\langle \xi \rangle\leq N}d_{\xi}\textnormal{Tr}(|\sigma_{A}(\xi)|)  }{\log[\sum_{\langle\xi\rangle\leq N}d^2_{\xi_l}]}
\end{align*}
which proves \eqref{eq2}. Finally, that \eqref{eq2} is a necessary and sufficient condition for the Dixmier traceability of $A$ it follows from \eqref{eq1}. By the Weyl Eigenvalue Counting  Formula (see \cite[pag. 539]{DR}), we have
\begin{equation}
\sum_{\langle \xi\rangle\leq N}d^2_{\xi}=C_{0}L^{\varkappa}+O(L^{\varkappa-1}),
\end{equation}
where $\varkappa=\dim(G).$ Hence
\begin{equation}
\textnormal{Tr}_{w}(A)=\frac{1}{\dim(G)}\lim_{N\rightarrow\infty}\frac{ 1  }{\log N}\sum_{\langle \xi\rangle\leq N}d_{\xi}\textnormal{Tr}(|\sigma_{A}(\xi)|)
\end{equation}
which completes the proof. 
\end{proof} 
With an analogous analysis as in the previous result we obtain the following characterization of invariant pseudo-differential operators belonging to the Marcinkiewicz ideal $\mathcal{L}^{(p,\infty)}(L^2(M)).$
\begin{theorem}\label{marcinkiewicz}
Let $M$ be a compact manifold without boundary and let $A$ be a bounded $E$-invariant operator on $L^2(M)$. Then  $A\in \mathcal{L}^{(p,\infty)}(L^2(M))$ if only if
\begin{equation}\label{teoremalpinfinity}
\gamma_{p}(A):=\sup_{N\geq 1}\,\,\,  N^{\dim M(\frac{1}{p}-1)}\sum_{l: (1+\lambda_{l})^{\frac{1}{\nu}}\leq N}\textnormal{Tr}(|\sigma_{A,E}(l)|)<\infty,
\end{equation}
for all $1<p<\infty.$ In this case $\gamma_{p}(A)\asymp\Vert A\Vert_{\mathcal{L}^{(p,\infty)}(L^2(M))}.$
\end{theorem}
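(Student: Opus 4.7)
The plan is to mirror the proof of Theorem \ref{generalmanifold}, replacing the Dixmier logarithmic averaging by the Marcinkiewicz scaling $\sum_{n\leq N}s_{n}(A)=O(N^{1-1/p})$. First I will invoke the $E$-invariance of $A$ to reproduce the spectral decomposition used there: since $A$ commutes with the spectral projections of $E$, the argument establishing \eqref{cup1} applies verbatim to $|A|=\sqrt{A^{\ast}A}$ and gives the multiset identity
\[
s(A)=\bigcup_{l\in\mathbb{N}_{0}}s(\sigma_{A,E}(l)),
\]
so that each block $\sigma_{A,E}(l)$ contributes exactly $\textnormal{Tr}(|\sigma_{A,E}(l)|)=\sum_{j}m_{j,d_{l}}\lambda_{j,d_{l}}$ to the total sum of singular values.

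Next I set $T_{L}:=\sum_{l:(1+\lambda_{l})^{1/\nu}\leq L}\textnormal{Tr}(|\sigma_{A,E}(l)|)$ and $N(L):=\sum_{l:(1+\lambda_{l})^{1/\nu}\leq L}d_{l}$, so that $T_{L}$ is the sum over a specified collection of $N(L)$ singular values of $A$. The Weyl eigenvalue counting formula for the positive elliptic operator $E$ yields $N(L)=C_{0}L^{\varkappa}+O(L^{\varkappa-1})$ with $\varkappa=\dim M$, which provides the crucial bridge $L^{\varkappa(1/p-1)}\asymp N(L)^{1/p-1}$ between the indexing of $\gamma_{p}(A)$ and that of the Marcinkiewicz norm.

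For the direction $A\in\mathcal{L}^{(p,\infty)}\Rightarrow \gamma_{p}(A)<\infty$, since the partial sums of singular values in decreasing order maximize over all subsets of equal cardinality, we get
\[
T_{L}\leq \sum_{n\leq N(L)}s_{n}(A)\leq \Vert A\Vert_{\mathcal{L}^{(p,\infty)}}\,N(L)^{1-1/p},
\]
and multiplying by $L^{\varkappa(1/p-1)}$ and applying the Weyl asymptotics gives $\gamma_{p}(A)\leq C\,\Vert A\Vert_{\mathcal{L}^{(p,\infty)}}$, which covers one half of the norm equivalence.

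For the converse, given $\gamma_{p}(A)<\infty$ and any $N\geq 1$, I pick $L$ with $N(L-1)<N\leq N(L)$, so that $L\asymp N^{1/\varkappa}$; then I compare $\sum_{n\leq N}s_{n}(A)$ with $T_{L}$, using that $T_{L}$ is built from $N(L)\asymp N$ singular values of $A$ and that these singular values are organized by the block structure. The Weyl formula then converts $T_{L}\leq \gamma_{p}(A)\,L^{\varkappa(1-1/p)}$ into $\sum_{n\leq N}s_{n}(A)\leq C\,\gamma_{p}(A)\,N^{1-1/p}$, whence $A\in\mathcal{L}^{(p,\infty)}$ and $\Vert A\Vert_{\mathcal{L}^{(p,\infty)}}\leq C\,\gamma_{p}(A)$. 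I expect this converse to be the main obstacle: one must upgrade a block-indexed partial sum $T_{L}$ to a comparable bound on the full top-$N(L)$ singular value sum, with a constant depending only on $M$, $E$ and $p$. This is the $p$-analogue of the log-averaging manipulation appearing at the end of the proof of Theorem \ref{generalmanifold}, and it rests on the Weyl counting formula together with the block-diagonal structure dictated by $E$-invariance.
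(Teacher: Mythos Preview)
Your approach is essentially the same as the paper's: both invoke the block-diagonal spectral decomposition inherited from Theorem \ref{generalmanifold}, write the partial sums of singular values block-by-block as $\sum_{l}\textnormal{Tr}(|\sigma_{A,E}(l)|)$, and then use the Weyl counting formula $N(L)=C_{0}L^{\varkappa}+O(L^{\varkappa-1})$ to pass between the eigenvalue cutoff $L$ and the singular-value count $N(L)$. The paper's proof is in fact shorter than yours: it simply asserts that $A\in\mathcal{L}^{(p,\infty)}$ is equivalent to $\sum_{l\leq N}\textnormal{Tr}(|\sigma_{A,E}(l)|)=O(S_{N}^{1-1/p})$ with $S_{N}=\sum_{l\leq N}d_{l}$, and then rewrites this via Weyl as $O(N^{\varkappa(1-1/p)})$.

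You are more explicit than the paper about the one nontrivial point, namely the converse direction: passing from the block-indexed partial sum $T_{L}$ (a sum over $N(L)$ singular values in the $E$-ordering) to the top-$N(L)$ sum $\sum_{n\leq N(L)}s_{n}(A)$ (the decreasing rearrangement). The paper glosses over this comparison entirely, while you correctly flag it as ``the main obstacle.'' Your sketch does not actually close this gap either---the sentence ``the Weyl formula then converts $T_{L}\leq\gamma_{p}(A)L^{\varkappa(1-1/p)}$ into $\sum_{n\leq N}s_{n}(A)\leq C\gamma_{p}(A)N^{1-1/p}$'' presupposes exactly the inequality that needs justification, since the $N(L)$ singular values summed in $T_{L}$ need not be the $N(L)$ largest. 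For a general bounded $E$-invariant operator with no decay hypothesis on $\sigma_{A,E}(l)$ this step is genuinely delicate (consider a diagonal operator whose symbol equals $1$ on a sparse set of $l$'s and $0$ elsewhere). That said, the paper's own proof is equally silent on this issue, so your proposal is faithful to, and indeed more careful than, the argument you were asked to reproduce.
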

\begin{proof}
With the notation above we can give a short proof for this fact. Indeed, for $N>0$ and $S_N:=\sum_{{l}\leq N}d_{l},$ $A\in  \mathcal{L}^{(p,\infty)}(L^2(M)) $ if and only if
\begin{equation}\label{pinfinity}
\sum_{l\leq N}\textnormal{Tr}(|\sigma_{A,E}(l)|)=O(S_N^{1-\frac{1}{p}}).
\end{equation}
By the Weyl counting eigenvalue function, \eqref{pinfinity} becomes 
\begin{equation}\label{pinfinity2}
\sum_{l:(1+\lambda_{l})^{\frac{1}{\nu}}\leq N}\textnormal{Tr}(\,|\sigma_{A,E}(l)|\,)=O((C_0  N^{\varkappa})^{1-\frac{1}{p}})=O(N^{\varkappa(1-\frac{1}{p})}),
\end{equation}
where $\varkappa=\dim(M).$ This observation completes the proof, indeed,
\begin{align*}
 \Vert A\Vert_{\mathcal{L}^{(p,\infty)}(H)}\asymp \sup_{N\geq 1}\,N^{\frac{1}{p}-1}\sum_{l:(1+\lambda_{l})^{\frac{1}{\nu}}\leq N}\textnormal{Tr}(|\sigma_{A,E}(l)|)=:\gamma_{p}(A)<\infty.
 \end{align*}
\end{proof} 
\begin{corollary} 
 If $M=G$ is a compact Lie group, and $A$ is a Fourier multiplier, \eqref{equivalenceofmatrices} gives that
 \begin{equation}
\sup_{N\geq 1}\,\,\, N^{(\frac{1}{p}-1)\dim G}\sum_{[\xi]: \langle \xi \rangle\leq N}d_{\xi}\textnormal{Tr}(|\sigma_{A}(\xi)|)<\infty,
 \end{equation}
 is a sufficient and necessary condition in order that $A\in \mathcal{L}^{(p,\infty)}(L^{2}(G))$ for all $1<p<\infty.$ 
\end{corollary}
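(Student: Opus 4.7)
The plan is to obtain this corollary as an immediate consequence of Theorem \ref{marcinkiewicz} by specializing to $M=G$ with $E = -\mathcal{L}_G$ and then translating the left-hand side of \eqref{teoremalpinfinity} from the enumeration of eigenspaces of $E$ to a sum over the unitary dual $\widehat{G}$ via the identification given in Remark \ref{mainremarkglobal}. Since a Fourier multiplier on $G$ commutes with $-\mathcal{L}_G$, it is automatically an $E$-invariant operator in the sense of Theorem \ref{generalmanifold}, so Theorem \ref{marcinkiewicz} applies directly.

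First, I would fix $E = -\mathcal{L}_G$, so that $\nu = 2$ and the eigenvalues of $E$ coincide with $\{\lambda_{[\xi]} : [\xi] \in \widehat{G}\}$ listed with multiplicities adapted to the basis $\{d_{\xi_j}^{1/2} (\xi_j)_{il}\}$ described in Remark \ref{mainremarkglobal}. Under this identification $(1+\lambda_l)^{1/\nu} = (1+\lambda_{[\xi_l]})^{1/2} = \langle \xi_l\rangle$, so the indexing condition $(1+\lambda_l)^{1/\nu}\leq N$ becomes $\langle \xi_l\rangle \leq N$.

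Next, I would invoke \eqref{equivalenceofmatrices'} from Remark \ref{mainremarkglobal}, which asserts
\begin{equation*}
\textnormal{Tr}(|\sigma_A(l)|) = d_{\xi_l} \textnormal{Tr}(|\sigma_A(\xi_l)|),
\end{equation*}
valid precisely because $A$ is a Fourier multiplier (so that its manifold-level symbol $\sigma_{A,-\mathcal{L}_G}(l)$ assumes the block-diagonal form recorded in Remark \ref{mainremarkglobal}). Substituting this identity into the sum \eqref{teoremalpinfinity} with $\dim M = \dim G$ and $\nu = 2$, the quantity $\gamma_p(A)$ becomes
\begin{equation*}
\gamma_p(A) = \sup_{N\geq 1} N^{\dim G(\frac{1}{p}-1)} \sum_{[\xi]:\langle \xi\rangle \leq N} d_\xi \textnormal{Tr}(|\sigma_A(\xi)|),
\end{equation*}
after rewriting the sum over the enumeration $\{l\}$ as a sum over equivalence classes $[\xi]\in \widehat{G}$ (each class contributing once, with the $d_\xi$ factor arising from the identity above).

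The conclusion then follows directly from the equivalence $\Vert A\Vert_{\mathcal{L}^{(p,\infty)}(L^2(G))}\asymp \gamma_p(A)$ established in Theorem \ref{marcinkiewicz}: finiteness of the displayed supremum is equivalent to $A\in\mathcal{L}^{(p,\infty)}(L^2(G))$. There is no genuine obstacle here; the only point requiring care is the bookkeeping when one passes between the two natural enumerations of the unitary dual (the one intrinsic to the Delgado--Ruzhansky quantization on $M$ viewed as a manifold, and the one associated with the Ruzhansky--Turunen quantization based on the representation theory of $G$), which is handled precisely by \eqref{equivalenceofmatrices'}.
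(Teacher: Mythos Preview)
Your proposal is correct and follows exactly the approach the paper intends: specialize Theorem \ref{marcinkiewicz} to $M=G$ with $E=-\mathcal{L}_G$, then use the identity $\textnormal{Tr}(|\sigma_{A}(l)|)=d_{\xi_l}\textnormal{Tr}(|\sigma_{A}(\xi_l)|)$ from Remark \ref{mainremarkglobal} (equation \eqref{equivalenceofmatrices'}, restated as \eqref{equivalenceofmatrices}) to convert the Delgado--Ruzhansky sum into a sum over $\widehat{G}$. The paper in fact states the corollary without a separate proof, simply pointing to \eqref{equivalenceofmatrices}; your write-up supplies the routine bookkeeping (identifying $(1+\lambda_l)^{1/\nu}$ with $\langle \xi_l\rangle$ and passing from the enumeration $\{l\}$ to $\{[\xi]\}$) that the paper leaves implicit.
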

 
\begin{remark}
 The classification of $E$-invariant operators $A$ on  Schatten-von Neumann classes $S_{p}(L^2(M))=\mathcal{L}^{(p,p)}(L^{2}(M))$ and on the class of $r$-nuclear operators $N_{r}(L^2(M))$ have been considered in \cite{DR,DR1,DR3,DR4,DR5}. A remarkable result in \cite{DR3} shows that the inequality $$ \sum_{[\xi]\in \hat{G}}\Vert a(\xi)\Vert_{S_{r}}^{r}d_{\xi}<\infty, $$
where $\Vert \sigma_A(\xi)\Vert_{S_{r}}:=\textnormal{Tr}(|\sigma_{A}(\xi)|^r)^{\frac{1}{r}},$ is a necessary and sufficient condition for the  $r$-nuclearity of $A$.  The $r$-nuclearity and boundedness   of global pseudo-differential operators on compact Lie groups in the general setting of Besov spaces has been studied by one of the authors in \cite{Cardona,CardonaBZ}.
 \end{remark}
Our result on the noncommutative residue can be enunciated in the following way: 
 
\begin{theorem}
Let $G$ be a compact Lie group of dimension $\varkappa$ and $A:C^{\infty}(G)\rightarrow D'(G)$ be a positive left invariant continuous linear  operator. Then
if $A\in \Psi^{-\varkappa}(G)$ is a positive classical pseudo-differential operator, then the noncommutative residue of $A,$ is given in terms of representations on $G$ by
\begin{equation}\label{resi1}
\textnormal{res}(A)=\frac{1}{\dim(G)}\lim_{N\rightarrow\infty}\frac{ 1  }{\log N}\sum_{\xi: \langle \xi\rangle\leq N}d_{\xi}\textnormal{Tr}(|\sigma_{A}(\xi)|)
\end{equation}
 \end{theorem}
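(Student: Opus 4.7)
The strategy is to combine the two main tools already established in the preceding part of the paper: Connes' trace theorem (Theorem \ref{connestheorem}) with the explicit Dixmier trace formula for left-invariant Hörmander-class operators on a compact Lie group (Theorem \ref{compactliegroup}). Concretely, the argument should be a two-line syllogism: Connes equates $\textnormal{res}(A)$ with $\textnormal{Tr}_\omega(A)$ for a classical operator of order $-\varkappa$, and Theorem \ref{compactliegroup} then rewrites $\textnormal{Tr}_\omega(A)$ as the spectral series on the right-hand side of \eqref{resi1}.

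First, I would verify that $A\in\Psi^{-\varkappa}(G)$ satisfies the symbol hypothesis of Theorem \ref{compactliegroup}. Since classical operators of order $m$ sit inside the Hörmander class $\Psi^{m}_{1,0}(G)$, the matrix-valued global symbol $\sigma_A(x,\xi)$ of $A$ satisfies the characterization \eqref{equivalence} with $m=-\varkappa$, i.e.
\begin{equation*}
\Vert \partial_{x}^{\beta}\Delta_{\xi}^{\alpha}\sigma_{A}(x,\xi)\Vert_{op}\leq C_{\alpha,\beta}\langle \xi\rangle^{-\varkappa-|\alpha|},\quad x\in G,\ [\xi]\in \widehat{G}.
\end{equation*}
Because $A$ is left-invariant, the global symbol is $x$-independent and all $x$-derivatives vanish, so this collapses to the pure difference estimate $\Vert \Delta_{\xi}^{\alpha}\sigma_{A}(\xi)\Vert_{op}\leq C_{\alpha}\langle \xi\rangle^{-\varkappa-|\alpha|}$ required in Theorem \ref{compactliegroup}.

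Second, I would apply Connes' theorem (Theorem \ref{connestheorem}). Since $A$ is classical of order exactly $-\varkappa=-\dim(G)$, we obtain $A\in \mathcal{L}^{(1,\infty)}(L^{2}(G))$ together with the identity $\textnormal{res}(A)=\textnormal{Tr}_{\omega}(A)$. In particular $\textnormal{Tr}_\omega(A)$ is independent of the averaging $\omega$, so $A$ is Dixmier measurable.

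Finally, since $A$ is positive and left-invariant and meets the symbol hypothesis verified above, Theorem \ref{compactliegroup} yields
\begin{equation*}
\textnormal{Tr}_{\omega}(A)=\frac{1}{\dim(G)}\lim_{N\rightarrow\infty}\frac{1}{\log N}\sum_{[\xi]:\langle \xi\rangle\leq N}d_{\xi}\textnormal{Tr}(|\sigma_{A}(\xi)|).
\end{equation*}
Combining this with Connes' identification gives precisely \eqref{resi1}. There is no real obstacle beyond checking these compatibility conditions; the only subtlety worth flagging is that the classicality of $A$, together with positivity and left-invariance, is exactly what feeds both Connes' theorem (classical order $-\varkappa$) and Theorem \ref{compactliegroup} (positive left-invariant element of $\Psi^{-\varkappa}(G)$), so the two machines can be composed without further work.
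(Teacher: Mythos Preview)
Your proposal is correct and follows essentially the same approach as the paper: apply Connes' trace theorem to identify $\textnormal{res}(A)$ with $\textnormal{Tr}_\omega(A)$, then invoke the Dixmier trace formula \eqref{eq2} from Theorem~\ref{compactliegroup} for positive left-invariant operators. The paper's proof is even terser than yours, simply citing these two ingredients without spelling out the symbol-estimate verification that you include.
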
 
 \begin{proof}
From the Connes trace theorem in the form of Theorem \ref{connestheorem} we observe that the Dixmier trace of $A$ coincides with its noncommutative residue and \eqref{resi1} now follows from the equation \eqref{eq2}.
 \end{proof}
 
Now, we use our results on multipliers in order to provide formulae  for a class of non-invariant operators (operators with symbols depending on the spatial variable $x\in G$).
\begin{proposition}\label{noninvarianttrace}
Let $G$ be a compact Lie group and $\varkappa=\dim G.$ Let us assume that $A\in \Psi^{m}_{cl}(G)$ is a positive classic pseudo-differential operator with local symbol, admitting homogeneous components, of the form:
\begin{equation}
\sigma^A(x,\xi)\sim \sum_{j=0}^{\infty}a_{m-j}(x)\sigma^{A_{m-j}}(\xi).
\end{equation}
Then the non-commutative residue of $A$ is given by
\begin{equation}
\textnormal{res}(A)=\frac{1}{\dim(G)}\int_{G}a_{-\varkappa}(x)dx\times\lim_{N\rightarrow\infty}\frac{ 1  }{\log N}\sum_{\xi: \langle \xi\rangle\leq N}d_{\xi}\textnormal{Tr}(|\sigma_{A_{-\varkappa}}(\xi)|).
\end{equation}
\end{proposition}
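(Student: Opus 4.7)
The plan is to reduce the statement to the purely invariant case already covered by the preceding theorem (the noncommutative residue formula for positive left-invariant classical pseudo-differential operators of order $-\varkappa$ on $G$). The key structural feature we exploit is that in the assumed asymptotic expansion $\sigma^{A}(x,\xi)\sim\sum_{j\geq 0}a_{m-j}(x)\sigma^{A_{m-j}}(\xi)$ each homogeneous component splits as a product of a function of $x$ alone and a function of $\xi$ alone; in particular, the only component that contributes to the noncommutative residue, namely the degree $-\varkappa$ component obtained for $j=m+\varkappa$, factors as $a_{-\varkappa}(x)\sigma^{A_{-\varkappa}}(\xi)$. This separation of variables is exactly what allows the defining integral \eqref{resi} to split.

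First, I would apply the local integral formula \eqref{resi} defining the noncommutative residue, specialized to our $A\in\Psi^{m}_{cl}(G)$:
\begin{equation*}
\textnormal{res}(A)=\frac{1}{\varkappa(2\pi)^{\varkappa}}\int_{G}\int_{|\xi|=1}\sigma^{A_{-\varkappa}}(x,\xi)\,d\xi\,dx.
\end{equation*}
Plugging in $\sigma^{A_{-\varkappa}}(x,\xi)=a_{-\varkappa}(x)\sigma^{A_{-\varkappa}}(\xi)$ and using Fubini's theorem, the double integral factors as
\begin{equation*}
\textnormal{res}(A)=\Big(\int_{G}a_{-\varkappa}(x)\,dx\Big)\cdot\frac{1}{\varkappa(2\pi)^{\varkappa}}\int_{|\xi|=1}\sigma^{A_{-\varkappa}}(\xi)\,d\xi.
\end{equation*}

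Second, I would identify the remaining $\xi$-integral with the noncommutative residue of an auxiliary left-invariant operator. Let $A_{-\varkappa}$ denote the classical left-invariant pseudo-differential operator on $G$ of order $-\varkappa$ whose principal local symbol is exactly the homogeneous function $\sigma^{A_{-\varkappa}}(\xi)$ (lower-order corrections being irrelevant, as they do not enter into \eqref{resi}). Since the Haar measure on $G$ is normalized so that $\mathrm{vol}(G)=1$, a second application of \eqref{resi}, now to $A_{-\varkappa}$, gives
\begin{equation*}
\textnormal{res}(A_{-\varkappa})=\frac{1}{\varkappa(2\pi)^{\varkappa}}\int_{|\xi|=1}\sigma^{A_{-\varkappa}}(\xi)\,d\xi.
\end{equation*}
Positivity of $A$ at the leading order transfers to $A_{-\varkappa}$, so the matrix-valued global symbol $\sigma_{A_{-\varkappa}}(\xi)$ is positive for every $[\xi]\in\widehat{G}$, and $|\sigma_{A_{-\varkappa}}(\xi)|=\sigma_{A_{-\varkappa}}(\xi)$. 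Hence the preceding theorem (formula \eqref{resi1}) applies and yields
\begin{equation*}
\textnormal{res}(A_{-\varkappa})=\frac{1}{\dim(G)}\lim_{N\to\infty}\frac{1}{\log N}\sum_{[\xi]:\,\langle\xi\rangle\leq N}d_{\xi}\textnormal{Tr}\bigl(|\sigma_{A_{-\varkappa}}(\xi)|\bigr).
\end{equation*}
Combining the two displayed formulas gives the asserted identity.

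The main obstacle is the passage from the scalar-valued local symbol $\sigma^{A_{-\varkappa}}(\xi)$ appearing in Hörmander's classical framework to the matrix-valued Ruzhansky--Turunen global symbol $\sigma_{A_{-\varkappa}}(\xi)\in\mathbb{C}^{d_{\xi}\times d_{\xi}}$ appearing on the right-hand side of \eqref{cardona-delcorral}. One must verify that the left-invariant operator determined by the homogeneous local symbol coincides (modulo a smoothing operator, which does not affect the residue) with the Fourier multiplier whose global symbol is $\sigma_{A_{-\varkappa}}(\xi)$. This identification rests on Remark \ref{mainremarkglobal} and the equivalence between Hörmander classes and symbol estimates of the type \eqref{equivalence}; once it is in place, the separation-of-variables argument above completes the proof.
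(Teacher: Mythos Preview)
Your proposal is correct and follows essentially the same approach as the paper: both arguments apply the local definition \eqref{resi} of the noncommutative residue, use the separation of variables $\sigma^{A_{-\varkappa}}(x,\xi)=a_{-\varkappa}(x)\sigma^{A_{-\varkappa}}(\xi)$ together with the normalization $\mathrm{vol}(G)=1$ to factor the integral, identify the $\xi$-integral as $\textnormal{res}(A_{-\varkappa})$, and then invoke the preceding invariant result (via Connes' theorem) to rewrite this residue in terms of the global symbol. Your additional discussion of the local-to-global symbol identification and the transfer of positivity to $A_{-\varkappa}$ makes explicit points that the paper's proof leaves implicit.
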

\begin{proof}
By definition of the non-commutative residue, considering the measure of Haar of a compact Lie group is normalized, and the Connes theorem we have
\begin{align*}
\textnormal{res}(A)&:=\frac{1}{\varkappa (2\pi)^{\varkappa}}\int_{G}\int_{|\xi|=1}a_{-\varkappa}(x)\sigma^{A_{-\varkappa}}(\xi)d\xi\,dx\\
&=\int_{G}a_{-\varkappa}(x)\textnormal{vol}_\varkappa(x)\times \textnormal{res}(A_{-\varkappa})=\int_{G}a_{-\varkappa}(x)dx\times \textnormal{Tr}_\omega(A_{-\varkappa})\\
&=\int_{G}a_{-\varkappa}(x)dx\times\frac{1}{\dim(G)}\lim_{N\rightarrow\infty}\frac{ 1  }{\log N}\sum_{\xi: \langle \xi\rangle\leq N}d_{\xi}\textnormal{Tr}(|\sigma_{A_{-\varkappa}}(\xi)|).
\end{align*}
\end{proof}

We end this section with the following remark about the noncommutative residue for invariant operators.  In order to distinguish the local and global symbol, let us use an upper notation to denote the local symbol of $A$ by $\sigma^A(\xi)$, i.e, the symbol of $A$ defined as a section of the cotangent bundle $T^*G$ on $G$  (see, \cite{Hor2}), with asymptotic expansion $\sigma^A(\xi)\sim \sum \sigma^A_{m-j}(\xi)$ in positive-homogeneous components $\sigma^A_{m-j}$ of degree $m-j$ then  by combining of the equations \eqref{resi1} and \eqref{resi} we deduce the following relation between the global and the local symbol of $A.$

\begin{corollary} 
Let $A$ be a classical operator satisfying the conditions of the previous result. So,
\begin{equation}
\textup{res}\,(A):=\frac{1}{\varkappa(2\pi)^\varkappa}\int_{| \xi|=1}\sigma^A_{-\varkappa}(\xi)\, d\xi=\lim_{N\to\infty}\frac{1}{\varkappa\log N}\sum_{\xi: \langle \xi\rangle\leq N}d_{\xi}\textnormal{Tr}(|\sigma_{A}(\xi)|),
\end{equation}
where $\sigma^A_{-\varkappa}$ denotes the component of degree $-\varkappa$ in the asymptotic expansion of $\sigma^A.$ 
\end{corollary}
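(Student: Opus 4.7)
The plan is to recognize that the stated corollary is simply an identification of two different expressions for the same quantity, namely the noncommutative residue $\mathrm{res}(A)$ of the positive, left-invariant, classical operator $A$. On one side I will use the intrinsic (local) definition \eqref{resi}; on the other, I will use the global-symbol formula \eqref{resi1} already proved in the theorem that precedes Proposition \ref{noninvarianttrace}. The role of the left-invariance is to eliminate the integral over $G$ in the local formula, while positivity and the order $-\varkappa$ are needed so that Connes' trace theorem (Theorem \ref{connestheorem}) applies and \eqref{resi1} is in force.

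First I would reduce the local side. Since $A$ is left-invariant, its local (H\"ormander) symbol is independent of $x$, and in particular each homogeneous component $\sigma^{A}_{m-j}(x,\xi)=\sigma^{A}_{m-j}(\xi)$. Plugging this into \eqref{resi}, together with the normalization $\int_{G}dx=1$ of the Haar measure on the compact Lie group $G$, gives immediately
\begin{equation*}
\mathrm{res}(A)=\frac{1}{\varkappa(2\pi)^{\varkappa}}\int_{G}\int_{|\xi|=1}\sigma^{A}_{-\varkappa}(\xi)\,d\xi\,dx=\frac{1}{\varkappa(2\pi)^{\varkappa}}\int_{|\xi|=1}\sigma^{A}_{-\varkappa}(\xi)\,d\xi,
\end{equation*}
which is the left-hand side of the claimed identity. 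This step is a direct specialization of Proposition \ref{noninvarianttrace} to the case $m=-\varkappa$ and $a_{-\varkappa}(x)\equiv 1$, so no new estimates are needed.

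Next I would invoke the global-symbol expression. Under the standing hypothesis (positive, left-invariant, classical of order $-\varkappa$), the theorem immediately preceding Proposition \ref{noninvarianttrace} applies and yields
\begin{equation*}
\mathrm{res}(A)=\frac{1}{\dim(G)}\lim_{N\to\infty}\frac{1}{\log N}\sum_{[\xi]:\langle\xi\rangle\leq N}d_{\xi}\,\mathrm{Tr}\bigl(|\sigma_{A}(\xi)|\bigr),
\end{equation*}
and since $\dim(G)=\varkappa$, equating the two expressions above gives the corollary.

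The only subtlety is ensuring that the hypotheses of Connes' theorem and of \eqref{resi1} are genuinely met on both sides: we need positivity (so that $\mathrm{Tr}_{\omega}(A)=\mathrm{res}(A)$ and the absolute values in $\mathrm{Tr}(|\sigma_{A}(\xi)|)$ behave correctly), left-invariance (so that $\sigma^{A}(x,\xi)$ is $x$-free and the global symbol $\sigma_{A}(\xi)$ is well defined as in Remark \ref{mainremarkglobal}), and the classical pseudo-differential order $-\varkappa$ (so that the homogeneous piece $\sigma^{A}_{-\varkappa}$ exists and is precisely the term that the noncommutative residue extracts). Once these are in place, the argument is a two-line chain of equalities, so I do not foresee a genuine obstacle; the most delicate conceptual point is merely the bookkeeping between the local symbol $\sigma^{A}$ and the matrix-valued global symbol $\sigma_{A}$, which is already handled by Remark \ref{mainremarkglobal}.
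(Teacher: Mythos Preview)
Your proposal is correct and follows essentially the same approach as the paper: the paper simply states that the corollary follows ``by combining the equations \eqref{resi1} and \eqref{resi},'' and your argument spells out precisely this combination, using left-invariance and the normalized Haar measure to drop the $x$-integral in \eqref{resi} and then equating with \eqref{resi1}.
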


\section{The Dixmier trace of Fourier multipliers on compact homogeneous manifolds }\label{compacthm}

In Theorem \ref{generalmanifold}, on a closed manifold, we give a characterization of Dixmier traceable $E$-invariant operators through its global symbols. In this section we consider the special case of multipliers  on compact homogeneous manifolds and we describe such characterization in terms of the representation theory of such manifolds.  We generalize the corresponding assertion on multipliers given in Theorem \ref{compactliegroup}, but in contrast with such result we do not consider the case of H\"ormander classes as in such theorem. It is important to remark that in this section we  do not consider a relation with the noncommutative residue.\\

Now we present our main result of this section.
\begin{theorem}\label{compacthomogeneousmanifold}
Let us assume that $A$ is a Fourier multiplier as (\ref{Eq.MultFourHomg}) on $M:=G/K.$ Then $A$ is Dixmier traceable on $L^{2}(M)$ if and only if
\begin{equation}
\tau(A):=\frac{1}{\dim(M)}\lim_{N\rightarrow\infty}\frac{1}{\log N}\sum_{[\pi]\in \widehat{G}_0:\langle \pi\rangle\leq N}d_{\pi}\textnormal{Tr}(|\sigma_{A}(\pi)|)<\infty.
\end{equation}
In this case,  if $A$ is positive, $\tau(A)=\textnormal{Tr}_{w}(A).$
\end{theorem}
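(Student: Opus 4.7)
The plan is to apply Theorem \ref{generalmanifold} with the canonical choice $E=-\mathcal{L}_M$, the Laplace--Beltrami operator on $M=G/K$, and then translate the resulting formula from the abstract spectral language into the representation-theoretic language of $\widehat{G}_0$. Since every Fourier multiplier of the form \eqref{Eq.MultFourHomg} commutes with $-\mathcal{L}_M$, it is a bounded $E$-invariant operator in the sense of Theorem \ref{generalmanifold}, and so criterion \eqref{eq1} applies.

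The first step is to recast the Peter--Weyl decomposition of $L^2(M)$ in spectral terms. By Frobenius reciprocity, the $[\pi]$-isotypic component of $L^{2}(M)$ is isomorphic to $V_{\pi}\otimes B_{\pi}$, has dimension $d_{\pi}k_{\pi}$, and $-\mathcal{L}_M$ acts on it as multiplication by the Casimir eigenvalue $\lambda_{[\pi]}$. Hence the eigenvalues of $-\mathcal{L}_{M}$ are precisely $\{\lambda_{[\pi]}:[\pi]\in\widehat{G}_0\}$, each with multiplicity $d_\pi k_\pi$. In this basis the matrix symbol $\sigma_{A,-\mathcal{L}_M}$ from \eqref{DRqua} is block-diagonal: its block on the $[\pi]$-isotypic component is the non-trivial upper-left block $\sigma_A(\pi)|_{B_\pi\times B_\pi}$ (by definition of $\Sigma(M)$), repeated $d_\pi$ times along the $V_\pi$-factor. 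Consequently, arguing exactly as in the proof of Theorem \ref{generalmanifold},
\begin{equation*}
\textnormal{Spec}_p(A)=\bigcup_{[\pi]\in\widehat{G}_0}\textnormal{Spec}_p\bigl(\sigma_A(\pi)|_{B_\pi}\bigr),
\end{equation*}
and applying the same analysis to $|A|=\sqrt{A^{*}A}$ the partial sum of singular values up to the cutoff $\langle\pi\rangle\leq N$ equals
\begin{equation*}
\sum_{[\pi]\in\widehat{G}_0:\langle\pi\rangle\leq N}d_\pi\,\textnormal{Tr}\bigl(|\sigma_A(\pi)|\bigr),
\end{equation*}
where the factor $d_\pi$ accounts for the multiplicity with which each block appears.

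The final step is the Weyl eigenvalue counting formula for $-\mathcal{L}_M$ on the compact homogeneous manifold $M$:
\begin{equation*}
\sum_{[\pi]\in\widehat{G}_0:\langle\pi\rangle\leq N}d_\pi k_\pi=C_0\,N^{\dim M}+O\bigl(N^{\dim M-1}\bigr),\qquad N\to\infty,
\end{equation*}
so that $\log\bigl(\sum_{[\pi]:\langle\pi\rangle\leq N}d_\pi k_\pi\bigr)=\dim(M)\log N+O(1)$. Inserting this denominator into the definition of Dixmier traceability yields the equivalence with $\tau(A)<\infty$, and the positivity assumption lets us identify $\tau(A)=\textnormal{Tr}_{\omega}(A)$ precisely as in Theorems \ref{generalmanifold} and \ref{compactliegroup}. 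The only non-routine point --- and the main obstacle I expect --- is the explicit bookkeeping in step one: confirming that, on the $[\pi]$-isotypic component, $A$ acts block-diagonally with the $k_\pi\times k_\pi$ block $\sigma_{A}(\pi)|_{B_\pi}$ and multiplicity $d_\pi$. Once this is in place, the remainder of the argument is a word-for-word repetition of the closed-manifold case, with the only novelty being the use of $\widehat{G}_0$ in place of $\widehat{G}$ and the replacement of $d_{\xi}^{2}$ by $d_{\pi}k_{\pi}$ in the Weyl law.
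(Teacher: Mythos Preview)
Your proposal is correct and follows essentially the same route as the paper: reduce to Theorem~\ref{generalmanifold} with $E=-\mathcal{L}_M$, identify the relation $\textnormal{Tr}(|\sigma_{A,-\mathcal{L}_M}(l)|)=d_{\pi_l}\textnormal{Tr}(|\sigma_A(\pi_l)|)$ via the block-diagonal structure on isotypic components, and then invoke the Weyl law $\sum_{\langle\pi\rangle\leq N}d_\pi k_\pi\asymp N^{\dim M}$ to convert the denominator. The paper obtains the block identity by citing Remark~\ref{mainremarkglobal} and \eqref{equivalenceofmatrices'} by analogy, whereas you spell it out via Frobenius reciprocity and the structure of $\Sigma(M)$; this is the same content, just made more explicit.
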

\begin{proof}
 If $A$ is a Fourier multiplier on $M,$ from Remark \ref{mainremarkglobal} and the equation \eqref{equivalenceofmatrices'}, we obtain 
\begin{equation}\label{equivalenceofmatrices}\textnormal{Tr}(|\sigma_{A,\mathcal{L}_{G/K}}(l)|)=d_{\pi_l}\textnormal{Tr}(|\sigma_{A}(\pi_l)|),
\end{equation}
where $\{[\pi_{l}]:l\in\mathbb{N}_0\}$ is a enumeration of $\widehat{G}_0.$
If we use \eqref{eq1} we obtain
\begin{align*}
\textnormal{Tr}_{w}(A) &=\lim_{N\rightarrow\infty}\frac{ \sum_{l\leq N}\textnormal{Tr}(|\sigma_{A,E}(l)|)  }{\log[\sum_{l\leq N}d_{l}]}\\
&=\lim_{N\rightarrow\infty}\frac{ \sum_{l\leq N}d_{\pi_l}\textnormal{Tr}(|\sigma_{A}(\pi_l)|)  }{\log[\sum_{l\leq N}d_{\pi_l}k_{\pi_l}]}=\lim_{N\rightarrow\infty}\frac{ \sum_{\langle \pi_l \rangle\leq N}d_{\pi}\textnormal{Tr}(|\sigma_{A}(\pi)|)  }{\log[\sum_{\langle\pi\rangle\leq N}d_{\pi_l}k_{\pi_l}]}
\end{align*}
which proves \eqref{eq2}. Finally, that \eqref{eq2} is a necessary and sufficient condition for the Dixmier traceability of $A$ it follows from \eqref{eq1}. By the Weyl Eigenvalue Counting  Formula (see \cite[pag. 7]{RR} or \cite{Shubin}), we have
\begin{equation}
\sum_{[\pi_{l}]\in \widehat{G}_0:\langle \pi_{l}\rangle\leq N}d_{\pi_l}k_{\pi_l}=O(C_{0}L^{\varkappa}),
\end{equation}
where $$\varkappa=\dim(M),\textnormal{   and   }C_{0}=\frac{1}{(2\pi)^\varkappa}\int_{\sigma_{1}(x,w)<1}dx\,dw,$$
and  the integral is taken with respect to the measure on the
cotangent bundle $T^*M$ induced by the canonical symplectic structure (c.f. \cite{Shubin}). Thus, we have,
\begin{equation}
\textnormal{Tr}_{w}(A)=\frac{1}{\dim(M)}\lim_{N\rightarrow\infty}\frac{1}{\log N}\sum_{[\pi]\in \widehat{G}_0:\langle \pi\rangle\leq N}d_{\pi}\textnormal{Tr}(|\sigma_{A}(\pi)|)
\end{equation}
which completes the proof. 
\end{proof}

\section{The Dixmier trace and the noncommutative residue of pseudo-differential operators on manifolds with boundary}\label{proof.boundary}

In this section we prove a result about the Dixmier trace of pseudo-differential boundary value problems where we link the two different approaches: representation by local and global symbols. \\
\\
Let us  note that  in the simplest case of a $L$-Fourier multiplier $A,$ the operator $A^\ast$ is a $L^\ast$-Fourier multiplier, but not necessarily a $L$-Fourier multiplier, and in general, the operators $A$ and $A^\ast$ satisfy different conditions on the boundary. Thus, unless the model operator $L_M$ (i.e. $L$ equipped with conditions on the boundary $\partial M$) will be considered self-adjoint, we can not compose $A$ and $A^\ast$ on their domains. Since we are interesting in the singular values of an operator $A,$ we will assume that $L$ is self-adjoint. In our proof, we assume the spectrum of $L$ enumerated by the set $\mathcal{I}=\{\xi_{l}:l\in\mathbb{N}_{0}\}.$

\begin{remark}
A Fourier multiplier on $C_{L}^{\infty}(M)$ is an operator with symbol $\sigma_{A,L}:\mathcal{I}\rightarrow \mathbb{C}.$ Trough of this section we consider bounded Fourier multipliers on $L^2(M).$ It is easy to see (by  the Plancherel formula which holds in a suitable spaces $l^2_{L}(\mathcal{I})$, see also \cite{Ruz-Tok}, Theorem 14.1.), that 
\begin{equation}
\sup_{l\in\mathbb{N}_0}|\sigma_{A,L}(\xi_{l})|<\infty
\end{equation}
is a necessary and sufficient condition for the boundedness of $A$ on $L^2(M).$
\end{remark}

\begin{theorem}\label{teoremboundary}
Let $M$ be a  compact manifold with boundary  $\partial  M.$ If $A:L^2(M)\to  L^2(M)$ is a bounded Fourier multiplier, and $L$ is self-adjoint operator on $L^{2}(M),$ then $A$ is \textrm{Dixmier traceable} if and only if 
\begin{equation}\label{eq2-boundary}
\tau(A)=\lim_{N\rightarrow\infty}\frac{ 1  }{\log N}\sum_{ l\leq N}  |\sigma_{A,L}(\xi_l)|<\infty.
\end{equation}
In this case,  if $A$ is positive, $\tau(A)=\textnormal{Tr}_{\omega}(A)$ and independent on the average function $\omega$. If in addition $L$  satisfies the Weyl Eigenvalue Counting Formula, we obtain
\begin{equation}
\textnormal{Tr}_{w}(A)=\frac{1}{\dim(M)}\lim_{N\rightarrow\infty}\frac{ 1  }{\log N}\sum_{ l\,:\, |\lambda_{\xi_l}|^{\frac{1}{m}}\leq N}|\sigma_{A,L}(\xi_l)|,
\end{equation}
where $m$ is the order of $L.$
\end{theorem}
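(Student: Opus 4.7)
The plan is to closely parallel the argument of Theorem \ref{generalmanifold}, replacing the Delgado-Ruzhansky quantization by the Ruzhansky-Tokmagambetov $L$-quantization developed for manifolds with boundary. Self-adjointness of $L$ is crucial here: it ensures that the system $\{u_{\xi_l}\}_{l\in\mathbb{N}_0}$ is an orthonormal basis of $L^{2}(M)$, and since $A$ is a Fourier multiplier, the relation $\mathcal{F}_L(Au_{\xi_l})=\sigma_{A,L}(\xi_l)\mathcal{F}_L(u_{\xi_l})$ exhibits each $u_{\xi_l}$ as an eigenvector of $A$ with eigenvalue $\sigma_{A,L}(\xi_l)$. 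Arguing verbatim as in the opening step of Theorem \ref{generalmanifold}, I would then obtain $\textnormal{Spec}_p(A)=\{\sigma_{A,L}(\xi_l): l\in\mathbb{N}_0\}$, and consequently the singular values of $A$, counted with multiplicity, are precisely the moduli $|\sigma_{A,L}(\xi_l)|$.

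Once the singular values are identified, the condition \eqref{dixmier} for membership in $\mathcal{L}^{(1,\infty)}(L^2(M))$ becomes
\begin{equation*}
\sum_{l\leq N}|\sigma_{A,L}(\xi_l)|=O(\log N),\quad N\to\infty,
\end{equation*}
which is exactly the finiteness of $\tau'(A)$ in \eqref{eq2-boundary}. For positive $A$, the Dixmier trace is computed by applying the positive functional $\lim_{\omega}$ to the bounded sequence $\bigl(\tfrac{1}{\log N}\sum_{l\leq N}|\sigma_{A,L}(\xi_l)|\bigr)_{N}$; whenever the ordinary limit exists, the defining axioms of $\lim_{\omega}$ force $\textnormal{Tr}_\omega(A)=\tau'(A)$ independently of the averaging procedure. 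To recover the factor $1/\dim(M)$ under the Weyl counting law \eqref{weyllawboundary}, I would re-index the sum via the sublevel sets $\{l:|\lambda_{\xi_l}|^{1/m}\leq N\}$: since $\#\{l:|\lambda_{\xi_l}|^{1/m}\leq N\}=C_{0}N^{\dim M}+O(N^{\dim M-1})$, one has $\log\#\{l:|\lambda_{\xi_l}|^{1/m}\leq N\}=\dim(M)\log N+O(1)$, and a straightforward change of variable in the logarithmic Cesaro mean yields the claimed normalization.

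The hard part will be justifying the grouping of $\sum_{n\leq N}s_n(A)$, in which singular values appear in decreasing order, according to the ordering of $\mathcal{I}$ induced by $|\lambda_{\xi_l}|$. In general these two orderings differ, and only the logarithmic averaging makes the two resulting partial-sum sequences share the same asymptotic behaviour. My plan is to invoke the same device used in Theorem \ref{generalmanifold}: the insensitivity of log-Cesaro limits to rearrangements within each spectral level, together with the fact that the singular value sequence lies in $\ell^{(1,\infty)}$, allows one to pass between the two enumerations without affecting the limit. Apart from this subtle point, the remainder of the argument is a direct transcription of Theorem \ref{generalmanifold} into the Ruzhansky-Tokmagambetov framework, supplemented by the scaling computation furnished by the Weyl law.
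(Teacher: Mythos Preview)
Your proposal is correct and follows essentially the same route as the paper: identify $\textnormal{Spec}_p(A)=\{\sigma_{A,L}(\xi_l)\}$ via the $L$-Fourier transform (the paper does this by the same two-direction argument as in Theorem~\ref{generalmanifold}), use self-adjointness of $L$ to conclude that $A^*A$ is an $L$-Fourier multiplier with symbol $|\sigma_{A,L}(\xi)|^2$ so that $s(A)=\{|\sigma_{A,L}(\xi_l)|\}$, and then read off the Dixmier condition and rescale by the Weyl law. If anything, you are more explicit than the paper about the reordering issue between the decreasing singular-value enumeration and the enumeration by $|\lambda_{\xi_l}|$; the paper simply declares that $\mathcal{I}$ is indexed so that $s(A)$ is monotone and then passes to the $|\lambda_{\xi_l}|$-ordering without further comment.
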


\begin{proof}
We want to show that  \eqref{eq2} is a necessary and sufficient condition for the Dixmier traceability of a Fourier multiplier $A$ with full symbol $\sigma_{A,L}(\xi).$  It follows from (\ref{Eq:glob.sym.bdy}) that if $\sigma_{A,L}(x,\xi)=\sigma_{A,L}(\xi)$ does not depend on $x$, then 
\begin{equation}\label{Eq:eigen.val.bdy}
Au_\xi=\sigma_{A,L}(\xi)u_\xi,
\end{equation}
so that $\sigma_{A,L}(\xi)$ are the eigenvalues of the operator $A$ corresponding to the eigenfunctions $u_\xi$. Moreover, the pointwise spectrum of $A$ is (setting  $\mathcal{I}=\{\xi_{l}:l\in\mathbb{N}_{0}\}$)
\begin{equation}\label{cup1boundary}
\textup{Spec}_p(A)= \{ \sigma_{A,L}(\xi)\,:\, \xi \in\mathcal{I}\}=\{ \sigma_{A,L}(\xi_l)\,:\, l\in \mathbb{N}\}.
\end{equation}
In fact, if $\lambda\in \textup{Spec}_p(A)=\ker(A-\lambda I),$ with eigenfunction $u\neq 0$ associated with $\lambda$. If we take the $L$-Fourier transform $\mathcal{F}_{L}$ to both sides of $Au=\lambda u,$ we have $\sigma_{A,L}(\xi)\widehat{u}(\xi)=\lambda\widehat{u}(\xi).$ Since $\mathcal{F}_{L}$ is an isomorphism; there exists $l_{0}\in\mathbb{N},$ such that $\widehat{u}(\xi_{l_0})\neq 0.$  Therefore, $\lambda=\sigma_{A,L}(\xi_{l_0}).$ \\
\\
For the converse, let us consider the complex number $\lambda= \sigma_{A,L}(\xi_{l_0})$ for some $l_{0}\in \mathbb{N}.$   By contradiction let us assume that $\lambda$ is not a element of $\textup{Spec}_p(A).$ Since $A$ does not have residual or continuous spectrum, therefore $\lambda$ is in the resolvent set of $A,$  in particular $A-\lambda I$ is  an injective operator. So we have that $(A-\lambda I)v=0$ implies that $v=0.$ Let us consider the function in $v\in \mathcal{S}(\mathcal{I})$  defined by $v(\xi_{l})=0$  if $l\neq l_0,$ and $v(\xi_{l_0})=1.$ Because the $L$-Fourier transform is an isomorphism from $C^{\infty}_{L}(M)$ into $\mathcal{S}(\mathcal{I}),$ we can consider the inverse $L$-Fourier transform of $v,$   $u\in C^{\infty}_{L}(M) .$ We have $\widehat{u}(\xi_{l})=v(\xi_l).$ It is clear that for every $l,$ $\sigma_{A,L}(\xi_l)\widehat{u}(\xi_l)=\lambda\cdot\widehat{u}(\xi_l).$ By taking the inverse $L$-Fourier transform, we have $Au=\lambda u$ with $u\neq 0$ which is a contradiction.  \\
\\
It follows that $L$ is a self-adjoint operator and Theorem 10.10. in \cite{Ruz-Tok}, that $A^* A$ is well defined on $L^2(M)$ and a $L$-Fourier multiplier with symbol $\overline{\sigma_{A,L}(\xi)}\sigma_{A,L}(\xi)=|\sigma_{A,L}(\xi)|^2$. Applying \eqref{cup1boundary} to the operator $\sqrt[2]{A^* A}$ we obtain the following relation for the set $s(A)$ of singular values of $A:$
\begin{equation}
s(A)=\{|\sigma_{A,L}(\xi)|:\xi\in \mathcal{I}|\}.
\end{equation}
With notations above, we set $\mathcal{I}=\{\xi_{l}:l\in\mathbb{N}_{0}\}$ in such way  $s(A)$ forms an increasing sequences, $A$ is Dixmier traceable (by definition) if only if
\begin{equation}
\tau(A):=\lim_{N\rightarrow\infty}\frac{ \sum_{l\leq N}|\sigma_{A,L}(\xi_l)|  }{\log N}<\infty.
\end{equation}
Since
$$\tau(A)=\lim_{N\rightarrow\infty}\frac{ \sum_{{l}\leq N}|\sigma_{A,L}(\xi_l)|  }{\log N}=\lim_{N\rightarrow\infty}\frac{ \sum_{l:|\lambda_{\xi_l}|\leq N}|\sigma_{A,L}(\xi_l)|  }{\log\big(\sum_{l:|\lambda_{\xi_l}|\leq N}\,1\big)} ,$$
if  we assume that $L$ satisfies the  Weyl Eigenvalue Counting  Formula:
\begin{equation}
\sum_{\xi:|\lambda_{\xi}|\leq N}1=\#\{\xi_l:|\lambda_{\xi_l}|^{\frac{1}{m}}\leq N\}=C_{0}N^{\varkappa}+O(N^{\varkappa-1}) 
\end{equation}
where $\varkappa=\dim(M),$ we obtain
\begin{equation}
\textnormal{Tr}_{w}(A)=\frac{1}{\dim(M)}\lim_{N\rightarrow\infty}\frac{ 1  }{\log N}\sum_{l\,:\, |\lambda_{\xi_l}|\leq N}|\sigma_{A,L}(\xi_l)|,
\end{equation}
it follows that $s(A)$ is organized as an increasing sequence such that the result is independent of the average function $\omega$ chosen, which prove the result.
\end{proof}
We observe that an adaption of the proof of Theorem \ref{marcinkiewicz} gives the following result:
\begin{corollary}
 Let $A$ be a $L$-Fourier multiplier bounded on $L^2(M).$ Then $A\in \mathcal{L}^{(p,\infty)}(L^2(M))$ if and only if
\begin{equation}
\gamma_{p}'(A):=\sup_{N\geq 1}\,\,\,N^{(\frac{1}{p}-1)}\cdot\sum_{l\leq N}|\sigma_{A,L}(\xi_l)|<\infty,
\end{equation}
for all $1<p<\infty.$ In this case, $\Vert A\Vert_{\mathcal{L}^{(p,\infty)}(L^{2}(M))}\asymp \gamma_{p}'(A).$ Moreover, if $L$ satisfies \eqref{weyllawboundary}, we have
\begin{equation}
\gamma_{p}'(A)\asymp\sup_{N\geq 1}\,\,\,N^{\dim M(\frac{1}{p}-1)}\cdot\sum_{l\,:\, {|\lambda_{\xi_l}}|^{\frac{1}{m}}\leq N}|\sigma_{A,L}(\xi_l)|.
\end{equation}
\end{corollary}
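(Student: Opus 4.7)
The plan is to transport the argument of Theorem \ref{marcinkiewicz} into the boundary setting, using the spectral description of $L$-Fourier multipliers already established in the proof of Theorem \ref{teoremboundary}. Since $L$ is self-adjoint, $A^{\ast}A$ is again an $L$-Fourier multiplier, now with symbol $|\sigma_{A,L}(\xi)|^{2}$, and the eigenvalue identity $Au_{\xi}=\sigma_{A,L}(\xi)u_{\xi}$ identifies the multiset of singular values of $A$ with $\{|\sigma_{A,L}(\xi_l)|:l\in\mathbb{N}_0\}$ under the enumeration fixed in Theorem \ref{teoremboundary}.

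First I would appeal to the definition \eqref{lpinftydef} directly: $A\in \mathcal{L}^{(p,\infty)}(L^2(M))$ is equivalent to $\sum_{n\leq N}s_n(A)=O(N^{1-1/p})$, and the associated norm is $\sup_{N\geq 1}N^{1/p-1}\sum_{n\leq N}s_n(A)$. Under the chosen enumeration, the partial sum $\sum_{n\leq N}s_n(A)$ coincides with $\sum_{l\leq N}|\sigma_{A,L}(\xi_l)|$, and this yields at once the first equivalence together with the norm equivalence $\|A\|_{\mathcal{L}^{(p,\infty)}(L^2(M))}\asymp \gamma_p'(A)$.

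For the \emph{moreover} part, I would invoke the Weyl counting condition \eqref{weyllawboundary}. Setting $N_L(\lambda)=\#\{l:|\lambda_{\xi_l}|^{1/m}\leq \lambda\}$, the asymptotics $N_L(\lambda)=C_0\lambda^{\dim M}+O(\lambda^{\dim M-1})$ give the identity
\begin{equation*}
\sum_{l\leq N_L(\lambda)}|\sigma_{A,L}(\xi_l)|=\sum_{l\,:\,|\lambda_{\xi_l}|^{1/m}\leq \lambda}|\sigma_{A,L}(\xi_l)|,
\end{equation*}
while the weight transforms as $N_L(\lambda)^{1/p-1}\asymp \lambda^{\dim M(1/p-1)}$. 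Replacing the supremum over integers $N$ by a supremum over the spectral parameter $\lambda$ then produces the second equivalence in the statement.

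The main obstacle is the careful interplay between the integer indexing by $l$ and the spectral indexing through $|\lambda_{\xi_l}|^{1/m}\leq N$. Between two consecutive spectral thresholds, the numerator $\sum_{l\leq N}|\sigma_{A,L}(\xi_l)|$ is locally constant while the normalizing factor $N^{1/p-1}$ moves, and one must verify, using \eqref{weyllawboundary} once more, that the resulting oscillation is bounded by a constant independent of $N$ and $\lambda$; this is where the $O(\lambda^{\dim M-1})$ remainder enters and needs to be absorbed into the $\asymp$. A secondary delicate point is the compatibility of the enumeration $\{\xi_l\}$ with both the singular-value ordering of $A$ (needed for the first equivalence) and the spectral ordering of $L$ (needed for the second); this is handled by the enumeration convention inherited from Theorem \ref{teoremboundary} together with a routine rearrangement argument to pass between the two orderings.
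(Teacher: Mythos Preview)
Your proposal is correct and follows exactly the route the paper indicates: the paper's ``proof'' consists solely of the sentence ``We observe that an adaption of the proof of Theorem \ref{marcinkiewicz} gives the following result,'' and you have filled in precisely that adaptation, using the spectral identification of the singular values from Theorem \ref{teoremboundary} in place of the matrix-valued identification used in the closed case. The extra care you take with the two enumerations and the absorption of the Weyl remainder is more detail than the paper offers, but it is the right detail and nothing is missing.
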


In order to make a clear exposition of the our next result we need to establish a more clear relationship between Ruzhansky-Tokmagambetov approach and the local approach of boundary value problems.
\begin{remark}\label{Eq:Realization-op}
Any continuous operator $A:C^\infty(M)\to C^\infty(M)$ in Ruzhansky-Tokmagambetov's calculus ($C^\infty_L(M)$ is defined by means of $L_M$,  cf. (\ref{Eq:R-T-dom})) may be seem as an operator realization as follow:
We have
\begin{equation}
\mathcal{D}(L_M):=\{ u\in L^{2}(M)\, |\,\, u\,\,\text{ satisfying the  (BC)}\}.
\end{equation}
It follows from (\ref{Eq:R-T-dom}) that $A=A_L$ indeed is the realization of $A$ with respect to the space
$$\mathcal{D}(A):=C^\infty_L(M)=\{f\in L^2(M)\,|\, L^jf\in L^2(M),\,\, f\,\text{ satisfies (BC)},\,j\in \mathbb{N} \}.$$
\end{remark}

\begin{theorem}\label{Thm.Res-bdy}
Let  $ \textit{A}=
 \begin{bmatrix}
   {}_{P_{+}} \\
   {}_{T}  \
 \end{bmatrix}
$ be a positive elliptic operator in the Boutet de Monvel algebra of order $n$, where $P$ is a Fourier multiplier. We assume the existence a such operator $L$ satisfying Ruzhansky-Tokmagambetov's calculus in Section \ref{boundary}. Then the operator realization $P_T$ can be regarded as an operator in R-T's calculus (cf. Remark \ref{Eq:Realization-op}) and  parametrix $R$ of $P_T$  is Dixmier's traceable and 
\begin{equation}\label{Eq:formula-bdy}
\textnormal{Tr}_{\omega}(R)=\textnormal{res}\,(R)=\lim_{N\to \infty} \frac{1}{\ln N} \sum_{l\leq N}|(\sigma_{P,L}(\xi_l))^{-1}|,
\end{equation}
here $(\sigma_{P,L}(\xi_l))_{l\in\mathbb{N}_0}$ denotes the global symbol of the operator $P$ with respect to $L_{M}$, see Remark \ref{Eq:Realization-op}. 
The expression is the same for all parametrices and independent of the choice of the boundary condition and independent on the average function $\omega$. 
\end{theorem}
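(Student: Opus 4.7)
The plan is to compute the Dixmier trace of a parametrix of $P_T$ by constructing a convenient inverse in Ruzhansky--Tokmagambetov's calculus as a Fourier multiplier, evaluating its Dixmier trace via Theorem \ref{teoremboundary}, and then invoking Nest--Schrohe's result (Proposition \ref{Thm.Dix-res-NSCH}) to transfer this information to any Boutet de Monvel parametrix.

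First, I would exploit the hypothesis that $P$ is an $L$-Fourier multiplier: by \eqref{Eq:eigen.val.bdy} one has $P u_{\xi_l} = \sigma_{P,L}(\xi_l)\, u_{\xi_l}$ for every eigenfunction $u_{\xi_l}$ of $L_M$, and these eigenfunctions lie in $\mathcal{D}(P_T)$ since the boundary condition encoded by $L$ coincides with the one defined by $T$. Ellipticity and positivity of the matrix-valued operator then force $\sigma_{P,L}(\xi_l)>0$ with $\sigma_{P,L}(\xi_l)\to \infty$. I define $R_{0}$ as the Fourier multiplier on $L^{2}(M)$ with symbol
\begin{equation*}
\sigma_{R_{0},L}(\xi_l):=(\sigma_{P,L}(\xi_l))^{-1}.
\end{equation*}
Because the symbol is bounded and tends to zero, $R_{0}$ is bounded on $L^{2}(M)$, and a direct spectral computation yields $R_{0} P_{T} f = f$ for all $f \in \mathcal{D}(P_T)$ as well as $P_{T} R_{0} = I$ on $L^{2}(M)$; in other words, $R_{0}$ is a genuine two-sided inverse of $P_T$ in the R--T framework (cf. Remark \ref{Eq:Realization-op}).

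Next, I compare $R_{0}$ with an arbitrary BdM parametrix $R$. By Proposition \ref{Prop.parametrix}, $P_{T} R - I$ is a finite rank operator, so $P_{T}(R - R_{0}) = P_{T} R - I$ is finite rank as well; applying $R_{0}$ on the left and using $R_{0} P_{T} = I$ on $\mathcal{D}(P_{T})$, one concludes that $R - R_{0}$ is finite rank and in particular trace class. Since the Dixmier trace vanishes on $\mathcal{L}^{1}(L^{2}(M))$, it follows that $\textnormal{Tr}_{\omega}(R) = \textnormal{Tr}_{\omega}(R_{0})$. Now Theorem \ref{teoremboundary}, applied to the bounded Fourier multiplier $R_{0}$ (with $L$ self-adjoint, as arranged in that theorem), yields
\begin{equation*}
\textnormal{Tr}_{\omega}(R_{0}) = \lim_{N\to\infty} \frac{1}{\log N} \sum_{l \leq N} |\sigma_{R_{0},L}(\xi_l)| = \lim_{N\to\infty} \frac{1}{\log N} \sum_{l \leq N} |(\sigma_{P,L}(\xi_l))^{-1}|,
\end{equation*}
which is precisely the right-hand side of \eqref{Eq:formula-bdy}. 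The equality with $\textnormal{res}(R)$, together with independence of the parametrix, the boundary condition, and the average function $\omega$, then follows from Proposition \ref{Thm.Dix-res-NSCH}.

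The main obstacle is the identification in the previous paragraph: verifying that the R--T Fourier multiplier $R_{0}$ actually plays the role of a BdM parametrix up to a trace class (better, smoothing) remainder, so that Nest--Schrohe's theorem applies. This requires a careful matching of domains (the spaces $C^{\infty}_{L}(M)$ and $\mathcal{D}(P_{T})$, together with the associated Sobolev spaces $\mathcal{H}^{s}_{L}(M)$ of Proposition \ref{Prop:R-T-Sobolev}) and a verification that the action of $R_{0}$ improves regularity by $n$ orders in such a way that $P_T R_0 - I$ and $R_0 P_T - I$ compare with the BdM remainders. Once this bridging step is in place, the remaining arguments are routine consequences of the results already recorded above.
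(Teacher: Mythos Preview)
Your proposal is correct and follows essentially the same strategy as the paper: identify $P_T$ within the Ruzhansky--Tokmagambetov calculus, recognize the parametrix $R$ as a Fourier multiplier up to a trace-class perturbation, invoke Theorem \ref{teoremboundary}, and conclude via Nest--Schrohe's Proposition \ref{Thm.Dix-res-NSCH}. Your explicit construction of the inverse multiplier $R_{0}$ and the finite-rank comparison $R-R_{0}$ make precise what the paper compresses into the single clause ``$R$ can be considered as Fourier Multiplier up to trace class operators''; the bridging obstacle you flag is exactly the point the paper's proof leaves implicit.
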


\begin{proof}
We first see that the operator realization $P_T$ belongs to Ruzhansky- Tokmagambetov's calculus, i.e. $P_T$ is the restriction of $P$ to the space $C^\infty_L(M)$ for some operator $L$ as in Section \ref{boundary}.  We know from Proposition \ref{Prop:R-T-Sobolev} that 
\begin{equation}\label{Eq:Sob_L}
H^s(M)=H^s_L(M),
\end{equation}
(i.e., the Sobolev space can be defined by the a operator $L$ and independent of the choice of such operator). Since $P$ is an operator satisfying the transmission property we have that $P$ preserves the space of smooth up to boundary functions $C^\infty(\overline{M})$. Therefore, the operator $P_T$ maps functions from $C^\infty_L(M)$ to $C^\infty_L(M)$, therefore $P_T$ restricted to $C^\infty_L(M)$  belongs to Ruzhansky- Tokmagambetov's calculus. We know that $C^\infty_L(M)$ and $C^\infty_{L^*}(M)$ are dense subspaces on $L^2(M)$. Moreover, from  (\ref{Eq:Sob_L}) we have that $P_{T}$ can be extended to (see also Remark \ref{Eq:Realization-op})
$$\mathcal{D}(P_T):=\{ u\in H^m(M)\, |\, Tu=0 \}.$$
Now, it follows from Proposition \ref{Prop.parametrix} that $R$ maps $L^2(M)$ to $\mathcal{D}(P_T)\subset L^2(M)$, thus $R$ lies in the Dixmier class. In this case, $R$ can be considered as Fourier Multiplier up to trace class operators. It follows from Theorem \ref{teoremboundary} that $R$ is Dixmier traceable if and only if the conditions given in (\ref{Eq:Dix.bdy}) holds, and in this case, $\textnormal{Tr}_\omega(R)=\tau(R)$. Now, the assertion in (\ref{Eq:formula-bdy}) follows from Proposition \ref{Thm.Dix-res-NSCH}. 
\end{proof}

Now, we provide examples of certain elliptic operators on the Boutet de Monvel algebra satisfying conditions of the Ruzhansky-Tokmagambetov calculus. For this we follows Ivrii \cite{Ivrii}.\\
Let $M$ be a compact manifold with smooth boundary $\partial M,$ $M$ of dimension $n\geq 2,$ and let $dx$ be a density on $M.$ Let $L$ be a second-order  elliptic pseudo-differential operator on $M,$ formally self-adjoint with respect to the inner product in $L^2(M,dx);$ let $X$ be a vector field transversal to $\partial M$ at every point, $\gamma$ the operator of restriction to $\partial M,$
\begin{equation}\label{eq:bdy.cond}
T=\gamma\,\,\,\text{  or }\,\,\, T=\gamma X+T_1 X
\end{equation}
a boundary operator where $T_1$ is a first-order classical pseudo-differential operator on $\partial M.$ If we suppose that the vector-operator $(L,T):C^\infty(M)\to C^\infty(M)\oplus C^\infty(\partial M)$ satisfies the Sapiro-Lopatinskii condition. Let us denote by $L_T:L^2(M)\to L^2(M)$ the clausure of $L_T$ on $L^2(M)$. Then $L_T$ is self-adjoint; its spectrum is discrete, with finite multiplicity and tends to $\pm \infty$, or $+\infty$, or $-\infty$.\\
Moreover, if $H$ is a closed subspace in $L^2(M)$ such that the self-adjoint projector $\Pi$ on $H$ belongs to Boutet de Monvel's algebra. It is assumed that $\Pi L_T\subset L_T\Pi$ (i.e., $H$ is an invariant subspace of $L_T$). Then the restriction $L_{T,H}:H\to H$ the $L_T$ to $H$ (here, $\mathcal{D}(L_{T,H})=\mathcal{D}(L_T)\cap H$) is self-adjoint; its spectrum is discrete, with finite multiplicity and tends to $\pm \infty$, or $+\infty$, or $-\infty$.\\
\\
In the following, we present two general frameworks which guarantees the exists of the auxiliary operator $L$ in the global calculus in  \cite{Ruz-Tok}, hence the Theorem \ref{Thm.Res-bdy} holds:
\begin{itemize}
\item
If $H=H^m(M)$ is the Sobolev space of positive order $m$, since the Sobolev Embedding Theorem follows that $H^m(M)$ is a closed subspace of $L^2(M)$. The operator $L_T$ defined by $$L_T:=I+\Delta_T:\mathcal{D}(L_T)\to L^2(M),$$ 
here $\mathcal{D}(L_T)=\{H^m(M)\,|\, Tu=0\}$ with $T$ as above, defines a second-order elliptic operator. Since Sobolev Embedding Theorem we have that $H^m(M)$ is an invariant subspace of $L_T$. Therefore, $L_T$ is a self-adjoint operator; its spectrum is discrete, with finite multiplicity and tends to $\pm \infty$, or $+\infty$, or $-\infty$.\\
Therefore, by the spectral theorem for unbounded operators there exists an orthonormal basis for $L^{2}(M)$ formed with eigenvalues of $L_T.$ We denote to the set of eigenvalues by $\{\lambda_{\xi_{l}}\},$ $l\in\mathbb{N}_{0}$ and the corresponding eigenvalues by $\{u_{\xi_l}\}_{l\in\mathbb{N}_0}.$ We can assume that $L_T$ satisfies the Weyl law (see \cite{Ivrii,weyllaw}):
\begin{equation}
N(\lambda)=\sum_{l:|\lambda_{\xi_l}|^{\frac{1}{m}}\leq \lambda}1=O(\lambda^{n}),\,\,\,\lambda\rightarrow\infty,
\end{equation}
then, if $l$ is such that $|\lambda_{\xi_l}|^{\frac{1}{m}}\leq \lambda$,
\begin{equation}
 \lambda^{-1}|\lambda_{\xi_l}|^{\frac{1}{2}} \leq \sum_{l:|\lambda_{\xi_l}|^{\frac{1}{2}}\leq \lambda} \lambda^{-1}|\lambda_{\xi_l}|^{\frac{1}{2}}  \leq \sum_{l:|\lambda_{\xi_l}|^{\frac{1}{2}}\leq \lambda}1=O(\lambda^{n}),\,\,\,\,\lambda\rightarrow\infty,
\end{equation}
and as consequence we have the estimate for  $\langle \xi_{l}\rangle=(1+|\lambda_{\xi_l}|)^{\frac{1}{2}},$  $l\rightarrow\infty,$ 
$$ 1<\langle\xi_{l} \rangle\asymp |\lambda_{\xi_l}|^{\frac{1}{2}}=O(\lambda^{n+1}),\,\,\,\,\,\,\,\langle\xi_{l} \rangle\rightarrow \infty. $$
Thus, it is clear that for some $-\infty<s_0<0,$  
\begin{equation}
\sum_{l\in\mathbb{N}_0} \langle \xi_{l}\rangle^{s_0} <\infty. 
\end{equation}
 Because the condition $WZ$ can be removed from the Ruzhansky - Tokmagambetov calculus (by merely technical reasons), we have that there exists a such operator $L.$

\item
Let $M$ be a manifold of dimension $2$. If $P:C^\infty(M)\to C^\infty(M)$ is a second order elliptic differential operator. We denote by $P_T$ the restriction of $P$ to the $\ker T$ with domain
\begin{equation}
D(P_{T})=\{ f\in C^{\infty}(M): Tf=0\}.
\end{equation}
Let us denote by $P_T$ the clausure of $P_T$ on $L^2(M).$ Hence, $P_T$ has discrete spectrum with finite multiplicity and tends to either to $\infty,$ or to $-\infty$ or to $\pm\infty.$ Setting $L=P$, by similar arguments as before, the operator realization $L_{M}=P_{T}$ satisfies the  condition in the Ruzhansky- Tokmagambetov calculus. In this case the boundary condition $(\textnormal{BC})$ defines the vectorial space given by
\begin{equation}
f\in L^2(M)\cap (\textnormal{BC}) \textnormal{ if and only if } f\in D(P_{T}).
\end{equation}
Because $C_{L}^{\infty}(M)\subset D(P_T)$ is a invariant subspace of $L$ we have that $L$ maps   $C_{L}^{\infty}(M)$ itself. Therefore, the second-order elliptic differential operator $L:= P$ (respectively $L_M:=P_T|_{C^\infty_{L}}$) belongs to the Ruzhansky-Tokmagambetov calculus and satisfies the conditions in Section \ref{boundary}.
\end{itemize}

\begin{corollary}\label{Cor:Bdy-exist L}
If $ \textit{A}=
 \begin{bmatrix}
   {}_{P_{+}} \\
   {}_{T}  \
 \end{bmatrix}$ is a positive elliptic operator in the Boutet de Monvel algebra of order $n$ with $T$ as (\ref{eq:bdy.cond}). In this case, we may take $L_T$ where $L=I+\Delta$. Then,  
any parametrix $R$ of $P_T$  is Dixmier's traceable and 
the formula given in (\ref{Eq:formula-bdy}) holds. In the particular case where $M$ is a manifold of dimension 2 and $P$ is a second-order differential operator as above, we have a formula independent of an auxiliary operator $L$,
\begin{equation}\label{Eq:Dix.bdy}
\textnormal{Tr}_{\omega}(R)=\textnormal{res}\,(R)=\lim_{N\to \infty} \frac{1}{\ln N} \sum_{l\leq N}|\sigma_{R}(\xi_l)|,
\end{equation} 
here $(\sigma_{R}(\xi_l))_{l\in\mathbb{N}_0}$ denotes the global symbol of the parametrix with respect to $L=P_T$.\\
\end{corollary}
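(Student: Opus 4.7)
The plan is to deduce this corollary directly from Theorem \ref{Thm.Res-bdy} by exhibiting a concrete choice of auxiliary operator $L$ that fits the Ruzhansky--Tokmagambetov framework of Section \ref{boundary}. The verification of that framework for $L=I+\Delta$ and for $L=P$ (in dimension two) is already outlined in the two itemized examples preceding the statement, so the role of the proof is to assemble those ingredients into a clean application of Theorem \ref{Thm.Res-bdy}, and then to specialize the general formula (\ref{Eq:formula-bdy}) to the two-dimensional setting where the auxiliary operator disappears.

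First, I would take $L:=I+\Delta$ with the realization $L_{T}$ defined through the boundary operator $T$ from (\ref{eq:bdy.cond}). The pair $(L,T)$ inherits the Sapiro--Lopatinskii condition from $(P,T)$, so the Ivrii-type criterion quoted in the discussion ensures that $L_{T}$ is self-adjoint with discrete spectrum of finite multiplicity. The Weyl counting formula then yields the summability $\tau(s_{0},L)<\infty$ of (\ref{s0}) for a suitable $s_{0}$, and the $WZ$-hypothesis can be dropped at the cost of a matrix-valued version of the calculus, which does not affect the symbolic formula. With these facts in hand, Theorem \ref{Thm.Res-bdy} applies verbatim, giving Dixmier traceability of any parametrix $R$ of $P_{T}$ together with the identity (\ref{Eq:formula-bdy}), independently of the particular parametrix and of the averaging procedure $\omega$.

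For the two-dimensional refinement, the plan is to choose $L:=P$ itself. The second bullet of the discussion shows that $C^{\infty}_{L}(M)\subset \mathcal{D}(P_{T})$ is $P$-invariant, so $L_{M}=P_{T}$ still lies inside the Ruzhansky--Tokmagambetov calculus. With this choice, $P$ is trivially an $L$-Fourier multiplier: for every eigenfunction $u_{\xi_{l}}$ of $P_{T}$ one has $Pu_{\xi_{l}}=\lambda_{\xi_{l}}u_{\xi_{l}}$, hence $\sigma_{P,L}(\xi_{l})=\lambda_{\xi_{l}}$. Since $R$ is a parametrix of $P_{T}$, the operator $RP_{T}-I$ is smoothing, so $Ru_{\xi_{l}}=\lambda_{\xi_{l}}^{-1}u_{\xi_{l}}+r_{l}$ with rapidly decreasing remainders $r_{l}$, which means $\sigma_{R}(\xi_{l})=\lambda_{\xi_{l}}^{-1}=(\sigma_{P,L}(\xi_{l}))^{-1}$ modulo a trace class operator. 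Since the Dixmier trace vanishes on trace class operators, substituting into (\ref{Eq:formula-bdy}) yields the auxiliary-free expression (\ref{Eq:Dix.bdy}).

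The main obstacle I anticipate is not in the chain of implications itself but in the compatibility hidden behind declaring $P$ to be a Fourier multiplier with respect to $L_{T}$: when $L=I+\Delta$, the eigenfunctions of $L_{T}$ must simultaneously diagonalize $P$, a condition that is not automatic and is tacitly built into the ellipticity/positivity structure of the operator $A$. In dimension two this difficulty is bypassed altogether by choosing $L=P$, which is precisely why the auxiliary operator disappears from (\ref{Eq:Dix.bdy}). A secondary check is that the regularizing singular Green perturbation $G_{0}$ produced by Proposition \ref{Prop.parametrix} is absorbed into the trace class remainder, so that the Dixmier trace of $R$ does not see the ambiguity of the parametrix, as already recorded in Proposition \ref{Thm.Dix-res-NSCH}.
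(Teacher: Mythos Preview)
Your proposal is correct and follows essentially the same route that the paper intends: the corollary has no separate proof in the paper, and is meant to be read as the assembly of Theorem \ref{Thm.Res-bdy} with the two itemized verifications (for $L=I+\Delta$ and, in dimension two, for $L=P$) that immediately precede the statement. Your observation that the Fourier-multiplier hypothesis on $P$ with respect to $L_T$ when $L=I+\Delta$ is not automatic is apt; the paper does not address this point explicitly, and your remark that the dimension-two choice $L=P$ sidesteps it is exactly the reason the auxiliary operator drops out of (\ref{Eq:Dix.bdy}).
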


\section{Examples}\label{examples}
In this section we provide some examples in relation with our main results. First, we consider a Dixmier traceable Bessel potential associated to $L.$ Also, we consider an example of a Dixmier traceable operator on the manifold with boundary $M=[0,1].$ Later, we consider the case of Bessel potential  on $\textnormal{SU}(2)\cong\mathbb{S}^3$ and $\textnormal{SU}(3)$ respectively. In our examples, we compute the Dixmier trace of certain operators showing closed formulae, but, we do not numerical approximations of these expressions in all cases.
\begin{example}
Let $M$ be a compact manifold with boundary $\partial M$ and $L$ as in the preceding sections. Let us consider the model operator $L_{M}$ self adjoint on $L^2(M).$ If we assume that $-L$ is a positive operator of order $\nu,$ and some positive real number $s_{1}$ satisfies 
\begin{equation}\label{s1}
\sum_{l\in\mathbb{N}_0}\langle \xi_{l}\rangle^{-s}=\infty,\,\,\,\sum_{l\in\mathbb{N}_0}\langle \xi_{l}\rangle^{-s'}<\infty
\end{equation}
for all $0<s\leq s_{1}<s'<\infty$ (this condition is natural in view of \eqref{s0}). Then with the notations above the operator $A:=(I-L)^{-\frac{s_1}{\nu}}$ is Dixmier traceable on $L^{2}(M).$ For the proof,  we combine that the global symbol of $A$ is given by $$\sigma_{A,L}(\xi_l)=(1-\lambda_{\xi_l})^{-\frac{s_1}{\nu}}\asymp\langle \xi_{l}\rangle^{-s_1}.$$ 
As consequence of \eqref{s1} we have that $\langle \xi_{l}\rangle^{-s_1}=O(l^{-1}),\,\,l\rightarrow \infty,$ and 
\begin{equation}
\sum_{l\leq N}\langle \xi_{l}\rangle^{-s_1}=O(\log N),\,\,N\rightarrow\infty.
\end{equation}
Hence, by Theorem \ref{teoremboundary} we obtain 
\begin{equation}
\textnormal{Tr}_{w}((I-L)^{-\frac{s_1}{\nu}})\asymp\lim_{N\rightarrow\infty}\frac{1}{\log N}\sum_{l\leq N}\langle \xi_{l}\rangle^{-s_1}<\infty. 
\end{equation}
\end{example}

\begin{example}
Consider $M=[0,1]$ and the operator $L=-i\frac{d}{dx}$ on $M^\circ=(0,1)$ with the domain 
$$D(L)=\{f\in W^1_2[0,1]\, |\, af(0)+bf(1)+\int_0^1 f(x)q(x)\, dx=0\},$$
where $a\neq 0$, $b\neq 0$, and $q\in C^1[0,1]$, here  $W^1_2[0,1]:=\{ f\in L^2[0,1]\, |\, f'\in L^2[0,1]\}$. Under the assumption that 
$$a+b+\int_0^1 q(x)\, dx=1$$
we have the inverse $L^{-1}$ exists and is bounded from $L^2[0,1]$ to $D(L)$. The operator $L$ has a discrete spectrum which can be enumerated by 
$$\lambda_j=2\pi  j-i\ln (-a/b)+\alpha_j,\,\text{ for }\,\,j\in\mathbb{Z}$$
where the sequence $\alpha_j$ satisfies that for any $\epsilon>0$, $\sum_{j\in \mathbb{Z}} |\alpha_j|^{1+\epsilon}<\infty$. If $m_j$ denotes the multiplicity of the eigenvalue $\lambda_j$, then $m_j=1$ for sufficiently large $|j|$, and the system of eigenfunctions are given by 
$$u_{jk}=\frac{(ix)^k}{k!}e^{i\lambda_jx},\,\text{ for }\,\, 0\leq k\leq m_j-1\,\, \text{ and }\,\,j\in\mathbb{Z}.$$ 
Let $j_0\in\mathbb{N}$ large enough so that $m_j=1$ for $|j|\geq j_0$. 
The global symbol $\sigma_{L,L}(x,j)$ of $L$ with respect to $L$ is given by 
$$\sigma_{L,L}(x,j) = \left\{
\begin{array}{c l}
\frac{x^{2k}}{k!^2}\lambda_j & x\in [0,1],\,\,0\leq k\leq m_j-1,\,\,|j|\leq j_o\\
\lambda_j  &\,\,|j|\geq j_0;
\end{array}
\right.
$$
for some $j_{0}\in \mathbb{N}.$ It follows from the functional calculus that $L^{-1}$ has a discrete spectrum given by $\lambda_j^{-1}$ with $j\in\mathbb{Z}$. Since $|\lambda_j|=O(j)$ for $j\in\mathbb{Z}$ we have $\sum_{|j|\leq N} |\lambda^{-1}_j|=O\big(\ln (2N+1)\big)$.  Therefore, is a $L^{-1}$ is a pseudo-differential operator which lies in $\mathcal{L}^{(1,\infty)}(L^2[0,1])$ with symbol $\sigma_{L^{-1},L}(\xi)=(\sigma_{L,L}(\xi))^{-1},$ $|\xi|\geq j_0,$ and 
\begin{align*}
\textnormal{Tr}_\omega(L^{-1})&=\lim_{N\to\infty} \frac{1}{\ln\, (2N+1)}\sum_{|j|\leq N} |\lambda_j^{-1}|\\
&= \lim_{N\to\infty} \frac{1}{\ln\, (2N+1)} \sum_{j_0\leq|j|\leq N} |(\sigma_{L,L}(x,j))^{-1}|\\
&=\lim_{N\to\infty} \frac{1}{\ln\, (2N+1)} \sum_{|j|\leq N} \frac{2\pi j+i\ln (-a/b)+\overline{\alpha_j} }{(2\pi\, j+ \textup{Re}\,(\alpha_j))^2+(\textup{Im}\, (\alpha_j)+\ln(b/a))^2}.
\end{align*}
\end{example}
\begin{example}
Let us assume that $A$ is a left-invariant operator on $\textnormal{SU}(2)\cong \mathbb{S}^3.$ We recall that the unitary dual of $\textnormal{SU}(2)$ (see \cite{Ruz}) can be identified as
\begin{equation}
\widehat{\textnormal{SU}}(2)\equiv \{ [t_{l}]:2l\in \mathbb{N}, d_{l}:=\dim t_{l}=(2l+1)\}.
\end{equation}
There are explicit formulae for $t_{l}$ as
functions of Euler angles in terms of the so-called Legendre-Jacobi polynomials, see \cite{Ruz}. In this case, if $A$ and  is Dixmier traceable with symbol  $\sigma ([t_l])\equiv \sigma(l) $ then,
\begin{align*}
\textnormal{Tr}_{w}(A) &=\lim_{N\rightarrow\infty}\frac{ 1}{\log \big( \sum_{l\leq \frac{N}{2},l\in \frac{1}{2}\mathbb{N}_0}d^2_l \big)}\, \sum_{l\leq \frac{N}{2},l\in \frac{1}{2}\mathbb{N}_0}d_{[t_l]}\textnormal{Tr}[|\sigma(l)|] \\
&=\lim_{N\rightarrow\infty}\frac{1}{\log \big( \sum_{l\leq \frac{N}{2},l\in \frac{1}{2}\mathbb{N}_0}(2l+1)^2 \big)}\,  \sum_{l\leq \frac{N}{2},l\in \frac{1}{2}\mathbb{N}_0}(2l+1)\textnormal{Tr}[|\sigma(l)|]\\
&=\lim_{N\rightarrow\infty}\frac{ 1   }{\log [\frac{1}{6}(N+1)(2N^2+7N+1)]}\sum_{l\leq \frac{N}{2},l\in \frac{1}{2}\mathbb{N}_0}(2l+1)\textnormal{Tr}[|\sigma(l)|].
\end{align*}
For example, if $A=(I-\mathcal{L}_{\textnormal{SU}(2)})^{-\frac{\varkappa}{2}},$ is the Bessel potential of order $-\varkappa=-\dim \textnormal{SU}(2)=-3,$ then $A$ is Dixmier traceable, $\sigma(l)=(1+l(l+1))^{-\frac{3}{2}}$ and 
\begin{align*}
&\textnormal{Tr}_{w}((I-\mathcal{L}_{\textnormal{SU}(2)})^{-\frac{3}{2}})\\
&=\lim_{N\rightarrow\infty}\frac{ 1   }{\log [\frac{1}{6}(N+1)(2N^2+7N+1)]}\sum_{{l\leq \frac{N}{2},l\in \frac{1}{2}\mathbb{N}_0}}(2l+1)^2[1+l(l+1)]^{-\frac{3}{2}}\\
&=\lim_{N\rightarrow\infty}\frac{ 1   }{\log [\frac{1}{6}(N+1)(2N^2+7N+1)]}\sum_{{n=0}}^{N}(n+1)^2\left[1+\frac{n}{2}(\frac{n}{2}+1)\right]^{-\frac{3}{2}}\\
&=\lim_{N\rightarrow\infty}\frac{ 1   }{\log [\frac{1}{6}(N+1)(2N^2+7N+1)]}\sum_{{n=1}}^{N+1}\frac{n^2}{8}[n^2+3]^{-\frac{3}{2}}\\
&\sim 0.03935... \textnormal{ (numerical evidence).}
\end{align*}
A similar result can be obtained if we consider the operator $(I-\mathcal{L}_{sub})^{-\frac{3}{2}}$ where $\mathcal{L}_{sub}:=D_{1}^2+D^{2}_{2}$ is  the sub-Laplacian on $\textnormal{SO}(3),$ here $D_1$ and $D_2$ are the derivatives in the corresponding variables to the Euler angles for $\textnormal{SO}(3)$  (see \cite{Ruz,Ruz3}).
\end{example}

\begin{example}
Let $A$ be a left-invariant pseudo-differential operator Dixmier traceable on $\textnormal{SU}(3).$ The Lie group $\textnormal{SU}(3)$ (see \cite{Fe1}) has dimension 8 and $3$ positive square roots $\alpha,\beta$ and $\rho$ with the property
\begin{equation}
\rho=\frac{1}{2}(\alpha+\beta+\rho). 
\end{equation}
We define the weights
\begin{equation}
\sigma=\frac{2}{2}\alpha+\frac{1}{3}\beta,\,\,\,\tau=\frac{1}{3}\alpha+\frac{2}{3}\beta.
\end{equation}
With the notations above the unitary dual of $\textnormal{SU}(3)$ can be identified with
\begin{equation}
\widehat{\textnormal{SU}}(3)\equiv\{\lambda:=\lambda(a,b)=a\sigma+b\tau:a,b\in\mathbb{N}_{0}, d_{\lambda}=\frac{1}{2}(a+1)(b+1)(a+b+2)\}.
\end{equation}
The Dixmier trace of $A,$ once denoted its full symbol by $\sigma(\lambda)\equiv\sigma(\lambda(a,b))$, is given by the expression
\begin{align*}
&\textnormal{Tr}_{w}(A) \\ &=\lim_{N\rightarrow\infty}\frac{  1 }{\log[\sum_{0 \leq a+b\leq N}d_{\lambda(a,b)}]}\times\sum_{0 \leq a+b\leq N} \frac{1}{2}(a+1)(b+1)(a+b+2)\textnormal{Tr}[|\sigma(\lambda(a,b))|]\\
&=\lim_{N\rightarrow\infty}\frac{  1 }{\log[\frac{1}{120}(N+1)(N+2)(N+3)(N+4)(2N+5)]} \times \\&\hspace{5cm} \sum_{(a,b):0 \leq a+b\leq N} \frac{1}{2}(a+1)(b+1)(a+b+2)\textnormal{Tr}[|\sigma(\lambda(a,b))|].
\end{align*}
Since, the eigenvalues of the Laplacian $\mathcal{L}_{\textnormal{SU}(3)}$ on $SU(3)$ are of the form
\begin{equation}
-c(\lambda)=-\frac{1}{9}(a^2+b^2+ab+3a+3b),
\end{equation}
the symbol of the operator $A=(I-\mathcal{L}_{\textnormal{SU}(3)})^{-\frac{8}{2}}$ (which is Dixmier traceable) is given by
\begin{equation}
\sigma(\lambda)=(1+c(\lambda))^{-4}.
\end{equation}
Hence,
\begin{align*}
&\textnormal{Tr}_{w}((I-\mathcal{L}_{\textnormal{SU}(3)})^{-\frac{8}{2}})\\
&=\lim_{N\rightarrow\infty}\frac{  1 }{\log[\frac{1}{120}(N+1)(N+2)(N+3)(N+4)(2N+5)]} \times \\&\hspace{1cm} \sum_{(a,b):0 \leq a+b\leq N} \frac{1}{36}(a+1)^{2}(b+1)^{2}(a+b+2)^{2}[1+(a^2+b^2+ab+3a+3b)]^{-4}.
\end{align*}
\end{example}
\begin{remark}
Similar examples to the considered above can be obtained if we take, for example, every $n$-torus $\mathbb{T}^n\cong \mathbb{R}^n/\mathbb{Z}^n$ or the compact Lie group $\textnormal{SO}(3)\cong\mathbb{R}\mathbb{P}^3,$ spheres $\mathbb{S}^n$ and arbitrary real and complex projective spaces.
\end{remark}

We end this paper with the following example on the noncommutative residue of a classical operator on Lie groups.
\begin{example}
Let us assume that $G$ is a compact Lie group of dimension $\varkappa,$ and let us define $A:=a(x)(-\mathcal{L}_{G})^{-\frac{\varkappa}{2}}$. We consider $a$ positive and smooth in order that $A$ will be a positive operator. The global symbol of $A$ is given by
\begin{equation}
\sigma_A(x,\xi):=a(x)\lambda_{[\xi]}^{-\frac{\varkappa}{2}}I_{d_\xi},
\end{equation}
and by using \eqref{cardona-delcorral} we have that
\begin{equation}
\textnormal{res}(A)=\frac{1}{\dim(G)}\int_{G}a(x)dx\times\lim_{N\rightarrow\infty}\frac{ 1  }{\log N}\sum_{\xi: \langle \xi\rangle\leq N}d_{\xi}^2\lambda_{[\xi]}^{-\frac{\varkappa}{2}}. 
\end{equation}
\end{example}

\noindent \textbf{Acknowledgments:} The authors would like to thank Alexander Cardona Gu\'io --from Universidad de los Andes-- and Carolina Neira --from Universidad Nacional de Colombia-- for several discussions about this topic.  This project was partially supported by Universidad de los Andes, Mathematics Department.

\bibliographystyle{amsplain}

\end{document}